\documentclass[11pt]{article} 

\usepackage{amsfonts,amsthm,amsmath,amssymb}
\usepackage{color} 
\usepackage{hyperref}
\usepackage{enumitem}
\usepackage{geometry}
\usepackage{cleveref}
\usepackage{a4wide}

\newtheorem{proposition}{Proposition}[section]
\newtheorem{theorem}[proposition]{Theorem}
\newtheorem{lemma}[proposition]{Lemma}
\newtheorem{corollary}[proposition]{Corollary}
\newtheorem{definition}[proposition]{Definition}

\newenvironment{proofof}[1]{\smallskip\noindent{\textbf{Proof~of~#1.}}%
  \hspace{1pt}}{\hspace{-5pt}{\nobreak\quad\nobreak\hfill\nobreak%
    $\square$\vspace{2pt}\par}\smallskip\goodbreak}

\numberwithin{equation}{section}
\setlength{\delimitershortfall}{-0.1pt}
\allowdisplaybreaks[4]

\newcommand{\LL}[1]{\mathbf{L^#1}}
\renewcommand{\L}[1]{\mathbf{L^#1}}

\newcommand{\dd}[1]{\mathinner{\mathrm{d}{#1}}}
\newcommand{\diver}{\mathinner{\mathop{\rm div}}}
\renewcommand{\div}{\mathinner{\mathop{\rm div}}}
\newcommand{\grad}{\mathinner{\mathop{\rm grad}}}
\newcommand{\tr}{\mathinner{\mathop{\rm tr}}}
\newcommand{\Lloc}[1]{\mathbf{L^{#1}_{loc}}}

\newcommand{\C}[1]{\mathbf{C^{#1}}}

\newcommand{\Cc}[1]{\mathbf{C_c^{#1}}}

\newcommand{\modulo}[1]{{\left|#1\right|}}
\newcommand{\norma}[1]{{\left\|#1\right\|}}
\newcommand{\reali}{{\mathbb{R}}}
\newcommand{\naturali}{{\mathbb{N}}}

\newcommand{\Lip}{\mathop\mathbf{Lip}}

\newcommand{\OO}{\mathinner{\mathcal{O}(1)}}
\newcommand{\sgn}{\mathop\mathrm{sgn}}

\newcommand{\Id}{\mathop{\mathbf{Id}}}

\newcommand{\caratt}[1]{\chi_{\strut#1}}

\newcommand{\eps}{\varepsilon}
\renewcommand{\epsilon}{\varepsilon}

\begin{document}

\title{General Renewal Equations\\ Motivated by Biology and
  Epidemiology}

\author{R.M.~Colombo$^1$ \and M.~Garavello$^2$ \and F.~Marcellini$^1$
  \and E.~Rossi$^3$}

\maketitle

\footnotetext[1]{Universit\`a degli Studi di Brescia, Unit\`a INdAM \&
  Dipartimento di Ingegneria dell'Informazione, via Branze, 38, 25123
  Brescia, Italy.}  \footnotetext[2]{Universit\`a degli Studi di
  Milano Bicocca, Dipartimento di Matematica e Applicazioni, via
  R.~Cozzi, 55, 20125 Milano, Italy.} \footnotetext[3]{Universit\`a
  degli Studi di Modena e Reggio Emilia, Dipartimento di Scienze e
  Metodi dell'Ingegneria, via Amendola, 2, 42122 Reggio Emilia,
  Italy.}

\begin{abstract}

  \noindent We present a unified framework ensuring well posedness and
  providing stability estimates to a class of Initial -- Boundary
  Value Problems for renewal equations comprising a variety of
  biological or epidemiological models. This versatility is achieved
  considering fairly general -- possibly non linear and/or non local
  -- interaction terms, allowing both low regularity assumptions and
  independent variables with or without a boundary. In particular,
  these results also apply, for instance, to a model for the spreading
  of a Covid like pandemic or other epidemics. Further applications
  are shown to be covered by the present setting.


  \medskip

  \noindent\textbf{Keywords:} IBVP for Renewal
  Equations; Well Posedness of Epidemiological Models; Differential
  Equations in Epidemic Modeling; Age and Space Structured SIR Models.
\end{abstract}

\section{Introduction}
\label{sec:Intro}

In a variety of biological models, different species are typically
described through their densities $u^1, u^2,\ldots, u^k$ and, in
general, each $u^h$ depends on time $t \in \reali_+$, on age
$a \in \reali_+$, on a spatial coordinate in $\reali^2$ or $\reali^3$
and possibly also on some structural variables. Thus, a unified
treatment of these models finds its natural setting in the following
general mixed Initial -- Boundary Value Problem (IBVP) in
$\mathcal{X} = \reali_+^m \times \reali^n$
\begin{equation}
  \label{eq:1}
  \!\!\!\!\left\{
    \begin{array}{@{\,}l@{\qquad}r@{\,}c@{\,}l@{}}
      \partial_t u^h
      +
      \diver_x \left(v^h (t,x) \, u^h\right)
      =
      g^h \left(t, x, u(t, x), u(t)\right)
      & (t,x)
      & \in
      & \reali_+ \times\mathcal{X}
      \\
      u^h (t,\xi) = u_b^h\left(t,\xi, u(t)\right)
      & (t,\xi)
      & \in
      & \reali_+ \times \partial\mathcal{X}
      \\
      u^h (0,x) = u_o^h (x)
      & x
      & \in
      & \mathcal{X}\,,
    \end{array}
  \right.
\end{equation}
where $h = 1, \ldots, k$. Aiming at a rather general setting while keeping sharp estimates,
without any loss in generality, we write~\eqref{eq:1} in the form
\begin{equation}
  \label{eq:32}
  \!\!\!\!\left\{
    \begin{array}{@{\,}l@{\qquad}r@{\,}c@{\,}l@{}}
      \partial_t u^h
      +
      \diver_x \left(v^h (t,x) \, u^h\right)
      =
      p^h\left(t, x, u (t)\right) u^h
      + q^h\left(t, x, u, u (t)\right)
      & (t,x)
      & \in
      & I {\times} \mathcal{X}
      \\
      u^h (t,\xi) = u_b^h\left(t,\xi, u(t)\right)
      & (t,\xi)
      & \in
      & I {\times} \partial\mathcal{X}
      \\
      u^h (0,x) = u_o^h (x)
      & x
      & \in
      & \mathcal{X}\,,
    \end{array}
  \right.
\end{equation}
where $h = 1, \ldots, k$. Note that the decomposition of the source
term $g^h$ in~\eqref{eq:1} into $p^h$ and $q^h$ is neither unique nor
in any sense restrictive.

We stress that both in~\eqref{eq:1} and in~\eqref{eq:32} the term
$u (t)$ appearing in the right hand sides is understood as a
\emph{function}, so that both the source and boundary terms
in~\eqref{eq:1}, besides being \emph{non linear}, also comprise quite
general \emph{non local}, i.e., \emph{functional}, dependencies.

The current literature comprehends a multitude of well known models
fitting into~\eqref{eq:1}: we recall here for
instance~\cite{MR2496711, MR4263205, BELL1967329, MR3013074, preprint,
  KangRuan2021, LorenziEtAl, MeeardTran_2009, PerthameBook}, leaving
to Section~\ref{sec:App} the highlighting of specific aspects
of~\eqref{eq:1} in other recent or classical models. In particular,
the well posedness and stability theorems below apply also to
model~\eqref{eq:4} which, to our knowledge, does not fully fit into
other well posedness results in the literature. At the same time, the
literature covering particular instances of~\eqref{eq:1} dates back to
classical milestones, such as~\cite{MR354068, KermackMcKendrick1927,
  Lotka339, m'kendrick_1925}. Moreover, various textbooks introduce to
the analytical study of models fitting into~\eqref{eq:1}, see for
instance~\cite{MR3700352, MR3616174, MR3887640, MR860962,
  PerthameBook, MR772205}.

A multitude of compartmental models share the key features of the
chosen framework~\eqref{eq:1}: they are the domain $\mathcal X$ of the
$x$ variable and the coexistence of rather general local and non local
terms. Indeed, under the choice of $\mathcal{X}$ above, we comprise
also bounded space/age domains~\cite{KangRuan2021}, half
lines~\cite{FisterEtAl2004}, full vector spaces~\cite{LorenziEtAl} as
well as their combinations~\cite{BELL1967329, preprint,
  NordmannPerthameTaing2017, TuckerZimmerman1988}. In all these cases,
rather general conditions are assigned along the different types of
boundaries that fit into~\eqref{eq:1}, such as, for instance, natality
terms~\cite{BELL1967329, NordmannPerthameTaing2017,
  TuckerZimmerman1988}
. The biological meaning imposes that these boundary terms, as well as
the sources in~\eqref{eq:1}, may contain both local and non local
terms. The former ones comprehend, for
instance, 
mortality terms~\cite{MR3013074, preprint}, while the latter can be
motivated by natality~\cite{BELL1967329, NordmannPerthameTaing2017},
predation~\cite{Elena2015} or interaction between
populations~\cite{MR3013074}, e.g., the propagation of an
infection~\cite{preprint}.

We underline that the present framework does not rely on any
regularizing effect of diffusion. The general non local terms here
considered need not have any smoothing effect, and can also be
absent. The lack of diffusion operators ensures that any movement or
evolution described by~\eqref{eq:1} propagates with a \emph{finite}
speed. In particular, the present approach is consistent with
deterministic modeling, while the Laplace operator may also serve to
describe various sorts of random effects, see for
instance~\cite{AinsebaIannelli2003, LanglaisBusenberg1997}.

\smallskip

Within this general framework, we first prove well posedness, i.e.,
local existence, uniqueness and continuous dependence of the solution
to~\eqref{eq:1} on the initial datum. Then, we provide conditions
ensuring the global in time existence and the stability with respect
to functions and parameters defining~\eqref{eq:1}. Throughout, the
functional setting is provided by $\LL1$ and the distance between
solutions is always evaluated through the $\LL1$ norm. As a
consequence, we can deal with non smooth solutions, a necessary
feature in view of control problems. Moreover, the boundedness neither
of the total variation nor of the $\LL\infty$ norm of the data is
required. Indeed, among the different notions of solutions to IBVPs
for renewal equations, we choose to establish our framework on that
introduced in~\cite{Martin, Vovelle}. This definition not only is
stated in terms of integral inequalities, more convenient in any
limiting procedure, but remarkably it does not require any notion of
trace, allowing us to deal with merely $\LL1$ solutions.

Remark that in~\eqref{eq:1} both the source terms and the boundary
terms are non linear. Thus, a key tool in the proofs is Banach
Contraction Theorem, based on precise estimates on scalar
equations. Merely requiring some sort of local Lipschitz regularity
does not rule out the possibility of finite time blow ups (in any
norm), as shown below by explicit examples. We thus resort to a
Gronwall type argument to obtain global in time existence. As a
byproduct, we also record a uniqueness result in the general setting
of~\eqref{eq:1} based, as in the classical {Kru\v zkov} case, on a
carefully chosen definition of solution, see\
\S~\ref{sec:defin-semi-entr}.

We also note that particular instances of equations falling
within~\eqref{eq:1} can be studied through other techniques, such as,
for instance, analytic semigroup theory, generalized entropy methods
or Laplace transform. We refer, for instance, to~\cite{MR3700352,
  MR3616174, MR3887640, PerthameBook}.

\smallskip

The present results, besides unifying the treatment of various models,
provide tools useful in tackling control/optimization problems based
on~\eqref{eq:1}. Indeed, the stability estimates proved in
Theorem~\ref{thm:stab} ensure that general integral functional defined
on the solutions are Lipschitz continuous functions of the data and
parameters characterizing~\eqref{eq:1}. A further direction that can
be pursued using the present results is that of inverse problems,
i.e., exhibiting conditions ensuring that an optimal choice of data
and parameters in~\eqref{eq:1} is possible, in order to best fit sets
of given experimental data.

\smallskip

This paper is organized as follows.  In
Section~\ref{sec:assumptions-results} we provide the basic well
posedness and stability results.  Then, Section~\ref{sec:App} is
devoted to specific applications that fit into~\eqref{eq:1}. The
technical analytic proofs are deferred to the final
Section~\ref{sec:AP}.

\section{Assumptions, Definitions and Results}
\label{sec:assumptions-results}

Throughout, we set $\reali_+ = \mathopen[0, +\infty\mathclose[$,
\begin{equation}
  \label{eq:33}
  I = \reali_+
  \quad \mbox{ or } \quad
  I = [0,T]
  \quad \mbox{ and } \quad
  \mathcal{X} = \reali_+^m \times \reali^n
\end{equation}
for a positive $T$.

First, we state what we mean by \emph{solution} to~\eqref{eq:1}. To
this aim, we extend to the present case the definitions
in~\cite{Martin, Vovelle}, see in
particular~\cite[Definition~3.5]{ElenaBoundary2018}.

\begin{definition}
  \label{def:sol}
  A map $u_* \in \C0(I; \LL1 (\mathcal{X}; \reali^k))$ is a
  \emph{solution} to~\eqref{eq:1} if setting for $h = 1, \ldots, k$,
  $t \in I$, $x \in \mathcal{X}$ and $\xi \in \partial\mathcal{X}$
  \begin{displaymath}
    \mathcal{G}^h (t,x)
    =
    g^h\left(
      t,x, u_*(t, x), u_*(t)
    \right)
    \quad \mbox{ and }\quad
    \mathcal{U}_b^h (t,\xi)
    =
    u_b^h\left(t, \xi, u_*(t) \right) \,,
  \end{displaymath}
  for $h = 1, \ldots,k$ the map $u_*^h$ is a semi--entropy solution to
  the IBVP
  \begin{displaymath}
    \left\{
      \begin{array}{l@{\qquad}r@{\,}c@{\,}l@{}}
        \partial_t u
        +
        \diver_x \left(v^h (t,x) \, u\right)
        =
        \mathcal{G}^h (t,x)
        & (t,x)
        & \in
        & I \times \mathcal{X}
        \\
        u (t,\xi) = \mathcal{U}_b^h (t,\xi)
        & (t,\xi)
        & \in
        & I \times \partial\mathcal{X}
        \\[2pt]
        u (0,x) = u_o^h (x)
        & x
        & \in
        & \mathcal{X} \,.
      \end{array}
    \right.
  \end{displaymath}
\end{definition}

\noindent We recall in Definition~\ref{def:mvsol} below the notion of
semi-entropy solution.


The main result of this paper concerns the well posedness of the
Cauchy Problem~\eqref{eq:32}.

\begin{theorem}
  \label{thm:main}
  Use the notation~\eqref{eq:33} and let the following assumptions
  hold:

  \begin{enumerate}[label=\bf{(V)}, ref=\textup{\textbf{(V)}}, align =
    left]
  \item \label{ip:(v)}
    $v \in (\C1 \cap \LL\infty)(I \times \mathcal{X}; \reali^{k\times
      (n+m)})$,
    $\diver_x v^h \in \Lloc1 (I; \LL\infty(\mathcal{X}; \reali))$ for
    $h = 1, \ldots, k$ and there exists a positive $V$ such that
    \begin{displaymath}
      \left(v^h (t,x)\right)_i > V
      \qquad \forall \, (t,x) \in I \times \partial\mathcal{X}
      \mbox{ and for }
      \begin{array}{r@{\,}c@{\,}l@{}}
        h
        & =
        &1, \ldots, k\,;
        \\
        i
        & =
        & 1, \ldots, m \,.
      \end{array}
    \end{displaymath}
  \end{enumerate}

  \begin{enumerate}[label=\bf{(P)}, ref=\textup{\textbf{(P)}},
    align=left]
  \item \label{hyp:g_a} For all $w \in \LL1(\mathcal{X}; \reali^k)$,
    the map $(t,x) \to p (t,x,w)$ is in
    $\C0 (I\times\mathcal{X}; \reali^k)$ and there exist positive
    $P_1$ and $P_2$ such that for $t \in I$, $x \in \mathcal{X}$,
    $w, w' \in \LL1(\mathcal{X}; \reali^k)$
    \begin{eqnarray*}
      \norma{p(t, x, w)}
      & \leq
      & P_1 + P_2 \, \norma{w}_{\LL1(\mathcal{X};\reali^k)} \,;
      \\
      \norma{p(t, x, w) - p(t, x, w')}
      & \leq
      & P_2 \, \norma{w - w'}_{\LL1(\mathcal{X};\reali^k)} \,.
    \end{eqnarray*}
  \end{enumerate}

  \begin{enumerate}[label=\bf{(Q)}, ref=\textup{\textbf{(Q)}},
    align=left]
  \item \label{hyp:g_b} For all $w \in \LL1(\mathcal{X}; \reali^k)$,
    the map $(t,x,u) \to q (t,x,u,w)$ is in
    $\C0 (I \times \mathcal{X}\times \reali^k; \reali^k)$ and there
    exist positive $Q_1$ and $Q_3$ and a function
    $Q_2 \in (\LL1 \cap \LL\infty)(\mathcal{X}; \reali_+)$ such that
    for $t \in I$, $x \in \mathcal{X}$, $u, u' \in \reali^k$,
    $w, w' \in \LL1(\mathcal{X}; \reali^k)$:
    \begin{eqnarray*}
      \norma{q(t, x, u, w)}
      & \leq
      & Q_1 \, \norma{u}
        + Q_2(x) \, \norma{w}_{\LL1(\mathcal{X};\reali^k)}
        + Q_3 \, \norma{u} \, \norma{w}_{\LL1(\mathcal{X};\reali^k)} \,;
      \\
      \norma{q(t, x, u, w) - q(t, x, u', w')}
      & \leq
      &
        Q_1 \, \norma{u - u'}
        + Q_3 \, \norma{w}_{\LL1(\mathcal{X};\reali^k)} \, \norma{u - u'}
      \\
      &
      & \quad
        + \, Q_3 \, \norma{u'} \, \norma{w - w'}_{\LL1(\mathcal{X};\reali^k)} \,.
    \end{eqnarray*}
  \end{enumerate}

  \begin{enumerate}[label=\bf{(BD)}, ref=\textup{\textbf{(BD)}}, align
    = left]
  \item\label{ip:(ub)}
    $u_b \colon \reali_+ \times \partial \mathcal{X} \times
    \LL1(\mathcal X; \reali^k) \to \reali^k$ is such that for any
    $w \in \LL1 (\partial\mathcal{X}; \reali^k)$, the map
    $(t,\xi) \to u_b(t, \xi, w)$ is measurable. Moreover, there exists
    a function
    $B \in (\LL1 \cap \LL\infty) (\partial \mathcal{X}; \reali_+)$
    such that for every $t \in I$, $\xi \in \partial\mathcal{X}$,
    $w,w' \in \LL1(\mathcal{X}; \reali^k)$,
    \begin{eqnarray*}
      \norma{u_b(t, \xi, w)}
      & \leq
      & B(\xi)
        \left(
        1
        +
        \norma{w}_{\LL1(\mathcal{X},\reali^k)}
        \right)
      \\
      \norma{u_b(t, \xi, w) - u_b(t, \xi, w')}
      & \leq
      & B(\xi) \, \norma{w - w'}_{\LL1(\mathcal{X},\reali^k)} \,.
    \end{eqnarray*}
  \end{enumerate}

  \begin{enumerate}[label=\bf{(ID)}, ref=\textup{\textbf{(ID)}}, align
    = left]
  \item \label{ip:(u0)} $u_o \in \LL1 (\mathcal{X}; \reali^k)$.
  \end{enumerate}

  \noindent Then,
  \begin{enumerate}[label=\bf{(WP.\arabic*)},
    ref=\textup{\bf{(WP.\arabic*)}}, align = left]

  \item \label{thm:it:1} There exists a positive $T_* \in I$ such
    that, setting $I_* = [0,T_*]$, the IBVP~\eqref{eq:32} admits a
    solution in the sense of \Cref{def:sol} defined on $I_*$.

  \item \label{thm:it:uni} Assume $u_1$ and $u_2$ solve~\eqref{eq:32}
    in the sense of Definition~\ref{def:sol} with
    $u_1, u_2 \in \LL\infty (I\times\mathcal{X}; \reali^k)$. Then,
    $u_1 = u_2$.

  \item \label{thm:it:ID} Let
    $\hat u_o, \check u_o \in \LL1 (\mathcal{X};\reali^k)$. If
    $\hat u \colon \hat I \to \reali^k$, respectively
    $\check u \colon \check I \to \reali^k$, solve~\eqref{eq:32} in
    the sense of Definition~\ref{def:sol} with initial datum
    $u_o = \hat u_o$, respectively $u_o = \check u_o$, then there
    exists a function
    $\mathcal{L} \in \Lloc\infty (\hat I \cap \check I; \reali)$ such
    that for all $t \in \hat I \cap \check I$
    \begin{displaymath}
      \norma{\hat{u}(t) - \check{u}(t)}_{\LL1 (\mathcal{X};\reali^k)}
      \leq
      \mathcal{L} (t) \;
      \norma{\hat{u}_{o} - \check{u}_{o}}_{\LL1 (\mathcal{X};\reali^k)} \,.
    \end{displaymath}
  \end{enumerate}
\end{theorem}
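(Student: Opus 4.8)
The plan is to reduce the nonlinear, nonlocal IBVP~\eqref{eq:32} to a fixed-point problem for a suitable solution operator acting on a closed ball in $\C0(I_*;\LL1(\mathcal X;\reali^k))$, and then invoke Banach's Contraction Theorem. The starting point is the well posedness theory for the \emph{scalar, linear} IBVP with prescribed coefficients, source and boundary datum: given $w\in\C0(I_*;\LL1(\mathcal X;\reali^k))$, freeze it in the right hand side, setting $P^h(t,x)=p^h(t,x,w(t))$, $Q^h(t,x)=q^h(t,x,w(t,x),w(t))$ (treating the $p^h u^h$ term as part of the linear part and $q^h$ as the genuine source), and $\mathcal U_b^h(t,\xi)=u_b^h(t,\xi,w(t))$. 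For each $h$ this is a scalar linear renewal equation whose semi-entropy solution exists, is unique and depends on the data through the explicit $\LL1$ estimates recorded for scalar equations (the "precise estimates on scalar equations'' announced in the introduction; these are the $\LL1$ bound on the solution in terms of $\norma{u_o^h}_{\LL1}$, $\norma{Q^h}$ and $\norma{\mathcal U_b^h}$ along the inflow boundary, the weight coming from $\norma{\div_x v^h}_{\LL\infty}$ and $\norma{P^h}_{\LL\infty}$, and the Lipschitz-in-data estimate). Call $u=\mathcal T(w)$ the vector whose components are these solutions.

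\medskip\noindent
\textbf{Invariance of a ball.}
Using assumptions~\ref{ip:(v)}, \ref{hyp:g_a}, \ref{hyp:g_b}, \ref{ip:(ub)} one bounds $\norma{P^h(t)}_{\LL\infty}\le P_1+P_2\norma{w(t)}_{\LL1}$, $\norma{Q^h(t)}_{\LL1}\le Q_1\norma{w(t)}_{\LL1}+\bigl(\norma{Q_2}_{\LL1}+Q_3\norma{w(t)}_{\LL1}\bigr)\norma{w(t)}_{\LL1}$ and, crucially thanks to the uniform lower bound $(v^h)_i>V>0$ on $\partial\mathcal X$ in~\ref{ip:(v)}, the inflow contribution of the boundary datum is controlled by $\tfrac1V\,\norma{B}_{\LL\infty}\bigl(1+\norma{w(t)}_{\LL1}\bigr)\,$ times a surface measure factor — this is exactly where condition~\ref{ip:(v)} is used, since a strictly positive normal velocity turns the boundary into a pure inflow boundary and makes the flux of mass through it a controlled linear function of the boundary data. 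Feeding these into the scalar $\LL1$ estimate and using a Gr\"onwall inequality on $[0,T_*]$, one shows that if $R>0$ is fixed (say $R=2\norma{u_o}_{\LL1}+1$) then $T_*$ can be chosen small enough, depending only on $R$ and the constants $V,P_i,Q_i,\norma{Q_2},\norma{B}$, so that $\mathcal T$ maps the ball $\{w:\ \norma{w(t)}_{\LL1}\le R\ \forall t\in I_*\}$ into itself; continuity in $t$ of $\mathcal T(w)$ follows from the corresponding property of scalar semi-entropy solutions.

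\medskip\noindent
\textbf{Contraction.}
Given $w_1,w_2$ in that ball, the differences of the frozen data are estimated by~\ref{hyp:g_a}--\ref{ip:(ub)}: $\norma{P_1^h(t)-P_2^h(t)}_{\LL\infty}\le P_2\norma{w_1(t)-w_2(t)}_{\LL1}$, the Lipschitz bound in~\ref{hyp:g_b} gives $\norma{Q_1^h(t)-Q_2^h(t)}_{\LL1}\le C(R)\,\norma{w_1(t)-w_2(t)}_{\LL1}$ (absorbing $\norma{w_1(t,\cdot)-w_2(t,\cdot)}_{\LL1}$ from the first term of the $q$-difference and the $\norma{w-w'}_{\LL1}$ term), and similarly for the boundary term via $\norma{B}$ and the factor $1/V$. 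Plugging into the scalar stability estimate yields
\[
  \norma{\mathcal T(w_1)(t)-\mathcal T(w_2)(t)}_{\LL1}
  \le C(R)\int_0^t \norma{w_1(s)-w_2(s)}_{\LL1}\dd s ,
\]
so after possibly shrinking $T_*$ (again only in terms of $R$ and the structural constants) $\mathcal T$ is a strict contraction; its unique fixed point is, by construction, a solution in the sense of \Cref{def:sol}, proving~\ref{thm:it:1}. Statement~\ref{thm:it:ID} then follows by applying the same scalar stability estimates to two solutions $\hat u,\check u$ with initial data $\hat u_o,\check u_o$: each solves a linear scalar IBVP with its own frozen coefficients, the differences of which are Lipschitz in $\norma{\hat u(s)-\check u(s)}_{\LL1}$ by the same computations, and Gr\"onwall on the common interval $\hat I\cap\check I$ produces the asserted $\mathcal L(t)\in\Lloc\infty$. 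For uniqueness~\ref{thm:it:uni}, the $\LL\infty$ hypothesis on $u_1,u_2$ lets one use the local Lipschitz constants of $p,q$ on the (now bounded) range of the solutions, and the same Gr\"onwall argument — applied this time without re-freezing, directly to $u_1-u_2$ — forces $u_1\equiv u_2$ on all of $I$.

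\medskip\noindent
\textbf{Main obstacle.}
The delicate point is the boundary term. Since \Cref{def:sol} is phrased via semi-entropy (Kru\v zkov-type integral) inequalities with \emph{no} trace, one cannot simply "prescribe'' $u^h(t,\xi)=\mathcal U_b^h(t,\xi)$ pointwise; the scalar well posedness and the $\LL1$ dependence of the solution on the boundary datum must be quoted in the exact integral form of~\cite{Martin,Vovelle,ElenaBoundary2018}, and one has to verify that the inflow character guaranteed by~\ref{ip:(v)} makes the boundary contribution enter the $\LL1$ estimate \emph{linearly and with a constant uniform in $w$} — the factor $1/V$ above. Keeping this constant, as well as the Gr\"onwall exponential, uniform over the ball of radius $R$ is what dictates the choice of $T_*$, and it is the part of the argument that must be carried out with care rather than by routine estimation; everything else is a standard contraction-mapping-plus-Gr\"onwall scheme.
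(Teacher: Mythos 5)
Your proposal follows essentially the same route as the paper: freeze the nonlocal arguments at $w$, solve the resulting scalar linear IBVPs along characteristics in the semi-entropy framework of~\cite{Martin, Vovelle}, run a Banach fixed point on a ball of $\C0 (I_*;\LL1 (\mathcal{X};\reali^k))$, and obtain~\ref{thm:it:uni}--\ref{thm:it:ID} from the scalar stability estimates plus Gronwall (with a Kru\v zkov--type comparison supplying uniqueness for $\LL\infty$ solutions). The only slips are cosmetic: in the paper's $\LL1$ bound the boundary datum enters multiplied by $v_i \leq \norma{v}_{\LL\infty}$ (through the Jacobian of the boundary change of variables, whose nondegeneracy is where $V>0$ is actually used) and weighted by $\norma{B}_{\LL1 (\partial\mathcal{X};\reali)}$, not by $1/V$ and $\norma{B}_{\LL\infty}$ times a surface measure, which may be infinite.
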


\noindent The proof is deferred to Section~\ref{sec:AP}.

In several applications it is of interest to guarantee that each
component in the solution attains non negative values. To this aim, we
state the following Corollary.

\begin{corollary}
  \label{cor:piu}
  Let the same assumptions of Theorem~\ref{thm:main} hold and assume
  moreover that for an index $h \in \{1, \ldots, k\}$
  \begin{enumerate}[label=\bf{(Q+)}, ref=\textup{\textbf{(Q+)}},
    align=left]
  \item \label{hyp:g-positivity} For $t \in I$,
    a.e.~$x \in \mathcal{X}$, $u \in \reali_+^k$,
    $w \in \LL1(\mathcal{X}; \reali_+^k)$, $q^h(t, x, u, w) \ge 0$.
  \end{enumerate}
  \begin{enumerate}[label=\bf{(BD+)}, ref=\textup{\textbf{(BD+)}},
    align = left]
  \item \label{ip:(ub-2)} For $t \in I$, $\xi \in \partial \mathcal X$
    and $w \in \LL1(\mathcal{X}; \reali^k)$,
    $u_b^h\left(t, \xi, w\right) \ge 0$.
  \end{enumerate}
  \begin{enumerate}[label=\bf{(ID+)}, ref=\textup{\textbf{(ID+)}},
    align = left]
  \item \label{ip:(u0plus)} For a.e. $x \in \mathcal{X}$,
    $u_o^h (x) \geq 0$.
  \end{enumerate}
  \noindent Then the unique solution $u$ to~\eqref{eq:32} also
  satisfies for every $t \in I_*$ and for a.e. $x \in \mathcal{X}$.
  \begin{equation}
    \label{eq:positivity}
    u^h(t, x) \ge 0 \,.
  \end{equation}
\end{corollary}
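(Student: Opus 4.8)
The plan is to produce, on the interval $I_* = [0,T_*]$ furnished by \ref{thm:it:1}, a solution of~\eqref{eq:32} whose $h$-th component is a.e.\ non negative; by \ref{thm:it:uni} this is the solution $u$ of the statement, and \eqref{eq:positivity} then holds for every $t\in I_*$ by continuity in time. Recall that the proof of \Cref{thm:main} produces the solution on $I_*$ as the unique fixed point of a contraction $\mathcal{T}$ on a closed ball $\mathcal{B}\subset\C0\big(I_*;\LL1(\mathcal{X};\reali^k)\big)$, the component $(\mathcal{T}\tilde u)^h$ being the semi--entropy solution (in the sense of \Cref{def:mvsol}) of the \emph{linear} IBVP obtained by freezing the non local and lower order dependencies at $\tilde u$:
\[
  \partial_t w + \div_x\!\left(v^h(t,x)\,w\right) = p^h\!\left(t,x,\tilde u(t)\right) w + q^h\!\left(t,x,\tilde u,\tilde u(t)\right),
  \quad w\big|_{\reali_+\times\partial\mathcal{X}} = u_b^h\!\left(t,\xi,\tilde u(t)\right),
  \quad w\big|_{t=0} = u_o^h .
\]
Let $K$ be the set of $\tilde u\in\mathcal{B}$ with $\tilde u^h(t,\cdot)\ge 0$ a.e.\ for every $t\in I_*$ (if \ref{hyp:g-positivity}, \ref{ip:(ub-2)} and \ref{ip:(u0plus)} are assumed for a family of indices, intersect the corresponding sets, so that below $\tilde u$ and $\tilde u(t)$ range in $\reali_+^k$, resp.\ $\LL1(\mathcal{X};\reali_+^k)$). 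The set $K$ is non empty — it contains the centre of $\mathcal{B}$, which may be taken to be either $0$ or the constant map $t\mapsto u_o$, non negative in its $h$-th slot by \ref{ip:(u0plus)} — and closed for the $\C0(I_*;\LL1)$ distance, hence complete. Thus it suffices to show $\mathcal{T}(K)\subseteq K$: then $\mathcal{T}$ restricts to a contraction of the complete metric space $K$, and its fixed point — the solution — lies in $K$.

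To this end fix $\tilde u\in K$ and set $w=(\mathcal{T}\tilde u)^h$. The initial datum $u_o^h$ is $\ge 0$ by \ref{ip:(u0plus)}, the boundary datum $u_b^h\big(t,\xi,\tilde u(t)\big)$ is $\ge 0$ by \ref{ip:(ub-2)}, and $q^h\big(t,x,\tilde u,\tilde u(t)\big)\ge 0$ by \ref{hyp:g-positivity} — which applies precisely because $\tilde u$ and $\tilde u(t)$ lie in the non negative cone — so that $w\equiv 0$ is a subsolution of the linear problem above. One then compares $w$ with $0$ through the semi--entropy inequalities of \Cref{def:mvsol} with constant $\kappa=0$, which require no trace: testing them so as to bound $t\mapsto\norma{w(t)^-}_{\LL1(\mathcal{X};\reali)}$, the conservative transport term is controlled by $\norma{\div_x v^h(t)}_{\LL\infty}\norma{w(t)^-}_{\LL1(\mathcal{X};\reali)}$, integrable in $t$ by \ref{ip:(v)}; the source term by $\norma{p^h(t,\cdot,\tilde u(t))}_{\LL\infty}\norma{w(t)^-}_{\LL1(\mathcal{X};\reali)}$, locally integrable in $t$ by \ref{hyp:g_a}, using $q^h\ge 0$; the boundary term is $\le 0$ by \ref{ip:(ub-2)} and the inflow character of $\partial\mathcal{X}$ built into \ref{ip:(v)}; and the contribution of the initial datum vanishes by \ref{ip:(u0plus)}. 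A Gronwall argument then yields $\norma{w(t)^-}_{\LL1(\mathcal{X};\reali)}=0$ for all $t\in I_*$, i.e.\ $(\mathcal{T}\tilde u)^h\ge 0$, whence $\mathcal{T}(K)\subseteq K$.

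The growth and Lipschitz bookkeeping and the Gronwall step are routine; the main obstacle is the scalar comparison, that is, extracting the differential inequality for the negative part $w^-$ directly from the trace--free, Kru\v zkov type integral formulation of \Cref{def:mvsol}, and in particular checking that the boundary term carries the favourable sign under \ref{ip:(ub-2)}. This can be carried out along the very lines of the scalar estimates on which the proof of \Cref{thm:main} rests, and once those are available the argument above is immediate.
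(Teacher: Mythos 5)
Your proposal is correct in outline and shares the paper's skeleton: restrict the contraction $\mathcal{T}$ of~\eqref{eq:operator-T} to the closed subset of the ball~\eqref{eq:X_M} whose relevant components are non negative, prove invariance, and conclude that the fixed point inherits the sign. Where you differ is in the proof of invariance. The paper's argument is essentially one line: $(\mathcal{T}w)^h$ is \emph{constructed} through the representation formula~\eqref{eq:8}, and once the frozen data $u_o^h$, $u_b^h\left(t,\xi,w(t)\right)$ and $q^h\left(t,x,w,w(t)\right)$ are non negative (by \ref{ip:(u0plus)}, \ref{ip:(ub-2)}, \ref{hyp:g-positivity}), every term in~\eqref{eq:8} is a non negative quantity times a positive exponential, so $(\mathcal{T}w)^h\ge 0$ is immediate. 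You instead run a Kru\v{z}kov type comparison with $\kappa=0$ in the semi--entropy inequality~\eqref{eq:mv} and a Gronwall argument on $t\mapsto\norma{\left((\mathcal{T}w)^h(t)\right)^-}_{\LL1(\mathcal{X};\reali)}$. That route does work: with $\kappa=0$ the flux difference is $v^h\,u$, the term $\diver_x f(t,x,0)$ vanishes, the initial and boundary contributions vanish by \ref{ip:(u0plus)} and \ref{ip:(ub-2)}, the frozen $q^h$ enters with the good sign by \ref{hyp:g-positivity}, and the spatial cut--off error is $\mathcal{O}(1/R)$ since $v\in\LL\infty$. What it buys is independence from the explicit characteristics formula (it would apply to any semi--entropy solution of the frozen linear problem); what it costs is that the derivation of the Gronwall inequality from~\eqref{eq:mv} is precisely the step you leave schematic, and the scalar lemmas of Section~\ref{sec:AP} are based on characteristics, not on manipulations of~\eqref{eq:mv}, so ``along the very lines of the scalar estimates'' is not an accurate pointer: either write out the test--function argument, or simply read positivity off~\eqref{eq:8} as the paper does.

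Two small inaccuracies, neither fatal. First, the boundary term in~\eqref{eq:mv} enters with a plus sign and is non negative by construction; under \ref{ip:(ub-2)} and $\kappa=0$ it simply vanishes, so no inflow argument is needed for its sign (and with $\kappa=0$ no $\diver_x v$ term appears either). Second, the appeal to \ref{thm:it:uni} is not quite legitimate, since that uniqueness statement requires $\LL\infty$ solutions, which the data do not guarantee; the correct conclusion, as in the paper, is that the non negative fixed point is the constructed solution, i.e.\ the one the corollary refers to. Finally, you rightly flag that applying \ref{hyp:g-positivity} to the frozen source requires the whole frozen argument to lie in the non negative cone, which forces the sign hypotheses on all components; the paper's own statement and proof gloss over the same point, so this is not held against you.
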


\noindent The proof is deferred to Section~\ref{sec:AP}.

The above result is of a local nature and, without further
assumptions, it can not be extended to a global result, as the
following examples show. Consider the Cauchy Problem~\eqref{eq:32}
with $k=1$, $m=0$, $n=1$, $\mathcal{X} = \reali$,
$p (t,x,w) = \int_0^1 w (x) \dd{x}$, $q \equiv 0$, which results in
\begin{displaymath}
  \left\{
    \begin{array}{l}
      \partial_t u = u \, \int_0^1u (t,x) \dd{x}
      \vspace{.2cm}\\
      u (0,x) = \caratt{[0,1]} (x)
    \end{array}
  \right.
  \quad \mbox{ solved by } \quad
  u (t,x) = \dfrac{1}{1-t} \; \caratt{[0,1]} (x) \,.
\end{displaymath}
Note that~\ref{hyp:g_a} holds with $P_1=0$ and $P_2=1$. Clearly, $u$
blows up in any norm at $t=1$.

Similarly, setting $k=1$, $m=1$, $n=0$, $\mathcal{X} = \reali_+$,
$p (t,x,w) = \int_{\reali_+} w (x) \dd{x}$, $q \equiv 0$
in~\eqref{eq:32}, which satisfies~\ref{hyp:g_a} with $P_1=0$ and
$P_2=1$, leads to the Cauchy Problem
\begin{displaymath}
  \left\{
    \begin{array}{l}
      \partial_t u + \partial_x u = u \, \int_{\reali_+} u (t,x) \dd{x}
      \\
      u (t, 0) = 0
      \\
      u (0,x) = \caratt{[0,1]} (x) \,,
    \end{array}
  \right.
  \quad \mbox{ solved by } \quad
  u (t,x) = \dfrac{1}{1-t} \; \caratt{[t,t+1]} (x) \,.
\end{displaymath}
Again, the solution blows up in any norm at $t=1$.

Typical biological/epidemiological models have further properties
ensuring that solutions are defined globally in time. In particular,
the model described in \S~\ref{subs:AP} displays a quadratic right
hand side similar to those in the examples above, differing in the
sign. Nevertheless, in this example, well posedness holds globally in
time. Indeed, in general, a lower bound on the solutions is available
since Corollary~\ref{cor:piu} ensures that the components of the
solution attain non negative values. An upper bound, preventing finite
time blow up, is obtained through assumption~\ref{ip:(ub)} on the
boundary datum and a further condition, see~\eqref{eq:36} below, that
bounds the overall growth.

\begin{corollary}
  \label{cor:assumpt-defin-result}
  Let $I = \reali_+$.  Let the assumptions of Corollary~\ref{cor:piu}
  hold for all $h=1, \ldots, k$. Assume moreover that for suitable
  $C_1 \in \Lloc\infty (\reali_+ ; \LL1(\mathcal{X}; \reali))$ and
  $C_2 \in \Lloc\infty (\reali_+; \reali)$,
  \begin{equation}
    \label{eq:36}
    \sum_{h=1}^kp^h (t,x,w) \, u^h + q^h (t,x,u,w)
    \leq C_1(t,x) + C_2 (t) \, \sum_{h=1}^k u^h
  \end{equation}
  for all $t \in \reali_+$, a.e. $x \in \mathcal{X}$,
  $u,w \in \reali^k$. Then, the solution to~\eqref{eq:32} is defined
  for all $t \in \reali_+$.
\end{corollary}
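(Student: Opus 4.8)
The plan is to combine the local existence and uniqueness from Theorem~\ref{thm:main} with an \emph{a priori} bound on $\norma{u(t)}_{\LL1(\mathcal{X};\reali^k)}$ that prevents the blow-up seen in the scalar examples, and then iterate the local result on successive time intervals. The key point is that, thanks to Corollary~\ref{cor:piu} applied to every index $h$, the solution is componentwise non-negative, so that $\norma{u(t)}_{\LL1(\mathcal{X};\reali^k)} = \sum_{h=1}^k \int_{\mathcal X} u^h(t,x)\dd{x}$; this turns the $\LL1$ norm into a linear functional of the solution, which is exactly what makes assumption~\eqref{eq:36} usable. I would first fix any finite horizon $T>0$ and work on $[0,T]$, aiming to show the solution extends to all of $[0,T]$; since $T$ is arbitrary this gives global existence.

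First I would derive the balance law for the total mass. Summing the semi-entropy formulation (or, equivalently, the renewal/Duhamel representation underlying Definition~\ref{def:sol}) over $h$ and integrating in $x$ over $\mathcal X = \reali_+^m\times\reali^n$, the divergence terms $\diver_x(v^h u^h)$ contribute only a boundary flux through $\partial\mathcal X$; by assumption~\ref{ip:(v)} the normal velocity components are bounded (by $\LL\infty$ of $v$) so this incoming flux is controlled by $\norma{u_b(t,\cdot,u(t))}_{\LL1(\partial\mathcal X;\reali^k)}$, which by~\ref{ip:(ub)} is bounded by $\norma{B}_{\LL1(\partial\mathcal X)}\bigl(1+\norma{u(t)}_{\LL1}\bigr)$ up to the velocity bound. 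The source contributes $\sum_h \int_{\mathcal X}\bigl(p^h u^h + q^h\bigr)\dd{x}$, which by~\eqref{eq:36} and non-negativity of $u$ is at most $\norma{C_1(t)}_{\LL1(\mathcal X)} + C_2(t)\,\norma{u(t)}_{\LL1}$. Putting these together yields, for a.e.~$t$,
\begin{displaymath}
  \frac{\dd{}}{\dd{t}}\norma{u(t)}_{\LL1(\mathcal{X};\reali^k)}
  \leq
  a(t) + b(t)\,\norma{u(t)}_{\LL1(\mathcal{X};\reali^k)}
\end{displaymath}
with $a,b \in \Lloc\infty(\reali_+;\reali)$ built from $\norma{C_1}_{\LL1}$, $C_2$, $\norma{B}_{\LL1(\partial\mathcal X)}$ and $\norma{v}_{\LL\infty}$. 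Gronwall's lemma then gives $\norma{u(t)}_{\LL1} \leq \mathcal M(T)$ for all $t$ in the maximal interval of existence contained in $[0,T]$, with $\mathcal M(T)$ finite and independent of that interval.

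Next I would run the continuation argument. Suppose, for contradiction, the maximal existence interval is $[0,T^\sharp)$ with $T^\sharp \leq T$. The length $T_*$ of the local existence interval produced by~\ref{thm:it:1} depends on the data only through bounds that are uniform once $\norma{u(t)}_{\LL1}$ is bounded: inspecting the contraction estimates, $T_*$ can be chosen depending only on $V$, $\norma{v}_{\LL\infty}$, $\diver_x v$, the constants $P_1,P_2,Q_1,Q_3$, $\norma{Q_2}_{\LL1\cap\LL\infty}$, $\norma{B}_{\LL1\cap\LL\infty}$ and an upper bound for the $\LL1$ norm of the initial datum. Since $\norma{u(t)}_{\LL1}\le\mathcal M(T)$ uniformly, restarting~\eqref{eq:32} from time $t_0$ close to $T^\sharp$ with initial datum $u(t_0)$ yields a solution on $[t_0, t_0+T_*]$ with $t_0+T_* > T^\sharp$; by~\ref{thm:it:uni} this agrees with $u$ where both are defined (note $u \in \LL\infty$ locally along the construction, or one argues via the $\LL1$-stability~\ref{thm:it:ID} to glue), contradicting maximality. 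Hence $T^\sharp = T$, and since $T$ was arbitrary the solution is defined on all of $\reali_+$.

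The main obstacle is the first step: making the formal mass balance rigorous at the level of semi-entropy (merely $\LL1$, no traces) solutions. The clean way is to avoid differentiating the norm directly and instead use the Duhamel/representation formula for scalar renewal equations that already underlies Definition~\ref{def:sol} and the proof of Theorem~\ref{thm:main}: integrating that formula in $x$ converts everything into honest Lebesgue integrals, the boundary datum enters as a genuine source on characteristics entering $\mathcal X$, and the velocity positivity~\ref{ip:(v)} together with $\diver_x v^h \in \Lloc1(I;\LL\infty)$ guarantees the change of variables along characteristics is well behaved. One must be careful that the inequality in~\eqref{eq:36} is stated pointwise in $u,w\in\reali^k$ and we apply it with $u = u(t,x)$ and $w = u(t)$; the $w$-slot on the right of~\eqref{eq:36} does not appear, so no integrability issue arises there, but one does need $u(t,x)\in\reali_+^k$ a.e., which is precisely what Corollary~\ref{cor:piu} supplies for every $h$. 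Once this integrated identity/inequality is in hand, the Gronwall step and the continuation step are routine.
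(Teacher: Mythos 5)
Your proposal is correct and follows essentially the same route as the paper: by Corollary~\ref{cor:piu} the $\LL1$ norm is the total mass, which is then differentiated using the Divergence Theorem, bounded via~\eqref{eq:36} and~\ref{ip:(ub)} to get a linear differential inequality, and closed by Gronwall plus the continuation criterion built into the local existence of Theorem~\ref{thm:main}. Your extra care about the boundary-flux velocity factor, the uniform local existence time, and making the mass balance rigorous through the representation formula (which is exactly what Lemma~\ref{lem:L1inf} provides) only spells out steps the paper leaves implicit.
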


Finally, we provide the stability estimates essential to tackle, for
instance, control problems. To this aim, we need to slightly
specialize the functional dependence of $p$, $q$ and $u_b$ on $u
(t)$. We thus obtain sufficient conditions to apply
Theorem~\ref{thm:main} and get stability estimates.

\begin{theorem}
  \label{thm:stab}
  Let assumptions~\ref{ip:(v)} and~\ref{ip:(u0)} hold. Assume that
  in~\eqref{eq:32}, for $t \in I$, $x \in \mathcal{X}$,
  $u \in \reali^k$, $w \in \LL1 (\mathcal{X}; \reali^k)$,
  \begin{equation}
    \label{eq:37}
    \begin{array}{rcl}
      p^h (t,x,w)
      & =
      & P^h \left(t, x, \int_{\mathcal{X}} \mathcal{K}_p^h (t,x,x') \, w (x') \dd{x'}\right)
      \\
      q^h (t,x,u,w)
      & =
      & Q^h \left(t, x, u, \int_{\mathcal{X}} \mathcal{K}_q^h (t,x,x') \, w (x') \dd{x'}\right)
      \\
      u_b^h (t,\xi,w)
      & =
      & U_b^h\left(
        t, \xi, \int_{\mathcal{X}} \mathcal{K}_u^h (t,\xi,x') \, w (x') \dd{x'}
        \right) \,,
    \end{array}
  \end{equation}
  where the functions above satisfy:
  \begin{enumerate}[label=\bf{($\overline{\mathbf{P}}$)},
    ref=\textup{\bf($\overline{\mathbf{P}}$)}, align = left]
  \item \label{stab:it:1} There exist $\bar P_1 \ge 0$ and
    $\bar P_2 \ge 0$ such that, for every $h=1, \ldots, k$, the
    function $P^h: I \times \mathcal X \times \reali^{k_p} \to \reali$
    ($k_p \ge 1$) satisfies
    \begin{displaymath}
      \modulo{P^h\left(t, x, \eta\right)} \le
      \bar P_1 + \bar P_2 \norma{\eta}
      \quad \mbox{ and } \quad
      \modulo{P^h\left(t, x, \eta_1\right) - P^h\left(t, x, \eta_2\right)}
      \le \bar P_2 \norma{\eta_1 - \eta_2}
    \end{displaymath}
    for every $t \in I$, $x \in \mathcal X$,
    $\eta, \eta_1, \eta_2 \in \reali^{k_p}$;
    $\mathcal K_p^h \in \LL\infty(I \times \mathcal X^2; \reali^{k_p
      k})$.
  \end{enumerate}
  \begin{enumerate}[label=\bf{($\overline{\mathbf{Q}}$)},
    ref=\textup{\bf($\overline{\mathbf{Q}}$)}, align = left]
  \item \label{stab:it:2} There exist $\bar Q_1, \bar Q_3 \ge 0$ and
    $\bar Q_2 \in \left(\LL1 \cap \LL\infty\right) \left(\mathcal X;
      \reali^+\right)$ such that, for every $h=1, \ldots, k$, the
    function
    $Q^h: I \times \mathcal X \times \reali^k \times \reali^{k_p} \to
    \reali^+$ ($k_q \ge 1$) satisfies
    \begin{eqnarray*}
      \modulo{Q^h\left(t, x, u, \eta\right)}
      & \!\!\!\!\!\le\!\!\!\!\!
      & \bar Q_1\norma{u} + \bar Q_2(x) \norma{\eta}
        + \bar Q_3 \norma{u} \norma{\eta}
      \\
      \modulo{Q^h\left(t, x, u_1, \eta_1\right) {-} Q^h\left(t, x, u_2,
      \eta_2\right)}
      & \!\!\!\!\!\le\!\!\!\!\!
      & \bar Q_1 \norma{u_1 - u_2} + \bar Q_3 \norma{\eta_1}
        \norma{u_1 - u_2} + \bar Q_3 \norma{u_2} \norma{\eta_1 - \eta_2}
    \end{eqnarray*}
    for every $t \in I$, $x \in \mathcal X$,
    $u, u_1, u_2 \in \reali^k$,
    $\eta, \eta_1, \eta_2 \in \reali^{k_q}$;
    $\mathcal K_q^h \in \LL\infty(I \times \mathcal X^2; \reali^{k_q
      k})$.
  \end{enumerate}
  \begin{enumerate}[label=\bf{($\overline{\mathbf{BD}}$)},
    ref=\textup{\bf($\overline{\mathbf{BD}}$)}, align = left]
  \item \label{stab:it:3} There exists
    $\bar B \in (\LL1 \cap \LL\infty) (\partial\mathcal{X}; \reali_+)$
    such that for every $h=1, \ldots, k$, the function
    $U_b^h \colon I \times \partial\mathcal{X} \times \reali^{k_u} \to
    \reali_+$ satisfies
    \begin{displaymath}
      \modulo{U_b^h (t, \xi, \eta)}
      \leq
      \bar B (\xi)  \left(1 + \norma{\eta}\right)
      \quad \mbox{ and } \quad
      \modulo{U_b^h (t, \xi, \eta_1) - U_b^h (t, \xi, \eta_2)}
      \leq
      \bar B (\xi) \, \norma{\eta_1 - \eta_2}
    \end{displaymath}
    for every $t \in I$, $\xi \in \partial\mathcal{X}$ and
    $\eta, \eta_1, \eta_2 \in \reali^{k_u}$;
    $\mathcal K_u^h \in \LL\infty(I \times \partial \mathcal X \times
    \mathcal X; \reali^{k_u k})$.
  \end{enumerate}
  Then, Theorem~\ref{thm:main} applies. Moreover, if both systems
  \begin{eqnarray}
    \label{eq:25}
    \!\!\!\!\left\{
    \begin{array}{@{\,}l@{\quad}r@{\,}c@{\,}l}
      \partial_t u^h
      +
      \diver_x \left(v^h (t,x) \, u^h\right)
      =
      \hat p^h\left(t, x, u(t)\right) u^h
      + \hat q^h\left(t, x, u, u(t)\right)
      & (t,x)
      & \in
      & I {\times}\mathcal{X}
      \\
      u^h (t,\xi) = \hat{u}_b^h\left(t,\xi, u(t)\right)
      & (t,\xi)
      & \in
      & I {\times} \partial\mathcal{X}
      \\
      u^h (0,x) = \hat{u}_o^h (x)
      & x
      & \in
      & \mathcal{X}\,,
    \end{array}
        \right.
    \\
    \label{eq:26}
    \!\!\!\!\left\{
    \begin{array}{@{\,}l@{\quad}r@{\,}c@{\,}l}
      \partial_t u^h
      +
      \diver_x \left(v^h (t,x) \, u^h\right)
      =
      \check p^h\left(t, x, u(t)\right) u^h
      + \check q^h\left(t, x, u, u(t)\right)
      & (t,x)
      & \in
      & I {\times}\mathcal{X}
      \\
      u^h (t,\xi) = \check{u}_b^h\left(t,\xi, u(t)\right)
      & (t,\xi)
      & \in
      & I {\times} \partial\mathcal{X}
      \\
      u^h (0,x) = \check{u}_o^h (x)
      & x
      & \in
      & \mathcal{X}\,,
    \end{array}
        \right.
  \end{eqnarray}
  satisfy the assumptions above, then the following stability
  estimates hold:
  \begin{eqnarray*}
    &
    & \norma{\hat{u}(t) - \check{u}(t)}_{\LL1 (\mathcal{X};\reali^k)}
    \\
    & \leq
    & \OO
      \bigl[
      \norma{\hat{P} - \check{P}}_{\LL\infty\left([0,t] \times \mathcal X
      \times \reali^{k_p}; \reali^k\right)}
      +
      \norma{\hat{\mathcal{K}}_p - \check{\mathcal{K}}_p}_{\LL\infty ([0,t]\times\mathcal{X}^2; \reali^{k_p k^2})}
      \bigr.
    \\
    &
    &
      \qquad\qquad
      +
      \norma{\hat Q - \check Q}_{\LL1 ([0,t]\times\mathcal{X}; \LL\infty (\reali^k\times\reali^{k_q}; \reali^k
      ))}
      +
      \norma{\hat{\mathcal{K}}_q - \check{\mathcal{K}}_q}_{\LL\infty ([0,t]\times\mathcal{X}^2; \reali^{k_q k^2})}
    \\
    &
    & \qquad\qquad
      \bigl.
      + \norma{\hat U_b - \check U_b}_{\LL1 ([0,t]\times \partial\mathcal{X}; \LL\infty (\reali^{k_u};\reali^k))}
      +
      \norma{\hat{\mathcal{K}}_u - \check{\mathcal{K}}_u}_{\LL\infty ([0,t]\times\partial\mathcal{X}\times\mathcal{X}; \reali^{k_u k^2})}
      \bigr]
      e^{\OO t}
  \end{eqnarray*}
  for every $t$ such that $\hat u$ and $\check u$ are defined on
  $[0,t]$ and where the Landau symbol $\OO{}$ denotes a constant
  independent of the initial data.
\end{theorem}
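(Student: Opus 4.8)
The plan is to first check that the structural representation~\eqref{eq:37} together with~\ref{stab:it:1},~\ref{stab:it:2} and~\ref{stab:it:3} forces the hypotheses of Theorem~\ref{thm:main}, and then to obtain the stability estimate from the scalar stability estimates for linear IBVPs that underlie the proof of that theorem, closed by a Gronwall argument. For the first part: since $\norma{\int_{\mathcal X}\mathcal K_p^h(t,x,x')\,w(x')\dd{x'}}\le\norma{\mathcal K_p^h}_{\LL\infty}\norma{w}_{\LL1(\mathcal X;\reali^k)}$, the growth and Lipschitz bounds on $P^h$ in~\ref{stab:it:1}, together with the linearity in $w$ of the inner integral, yield~\ref{hyp:g_a} with $P_1=\bar P_1$ and $P_2=\bar P_2\norma{\mathcal K_p}_{\LL\infty}$ (up to a dimensional constant), continuity of $(t,x)\mapsto p(t,x,w)$ following from that of $P^h$ by dominated convergence; the identical computation, now exploiting $\bar Q_2\in\LL1\cap\LL\infty$ and $\bar B\in\LL1\cap\LL\infty$, gives~\ref{hyp:g_b} and~\ref{ip:(ub)}. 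Hence Theorem~\ref{thm:main} applies to~\eqref{eq:32} and, whenever they satisfy the same assumptions, to~\eqref{eq:25} and~\eqref{eq:26}.

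For the stability estimate, let $\hat u$ and $\check u$ be the two solutions and fix $t$ in their common interval of definition. By Definition~\ref{def:sol}, for each $h$ the component $\hat u^h$ is the semi-entropy solution of the linear IBVP with the \emph{same} velocity $v^h$, source $\hat{\mathcal G}^h(s,x)=\hat p^h(s,x,\hat u(s))\,\hat u^h(s,x)+\hat q^h(s,x,\hat u(s,x),\hat u(s))$, boundary datum $\hat{\mathcal U}_b^h(s,\xi)=\hat u_b^h(s,\xi,\hat u(s))$ and initial datum $\hat u_o^h$, and similarly for $\check u^h$. The scalar stability estimate for such IBVPs — depending only on $V$ and on $\norma{\diver_x v^h}_{\Lloc1(I;\LL\infty)}$, as established in Section~\ref{sec:AP} — gives, after summation over $h$,
\begin{displaymath}
  \norma{\hat u(t)-\check u(t)}_{\LL1(\mathcal X;\reali^k)}
  \le
  \OO\left(
    \norma{\hat u_o-\check u_o}_{\LL1}
    +\int_0^t\norma{(\hat{\mathcal G}-\check{\mathcal G})(s)}_{\LL1(\mathcal X;\reali^k)}\dd{s}
    +\int_0^t\norma{(\hat{\mathcal U}_b-\check{\mathcal U}_b)(s)}_{\LL1(\partial\mathcal X;\reali^k)}\dd{s}
  \right),
\end{displaymath}
the first term being absent when $\hat u_o=\check u_o$ and, in general, being exactly the content of item~\ref{thm:it:ID} of Theorem~\ref{thm:main}.

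The core of the proof is to bound the two integrands above in terms of the six data-difference norms in the statement — each multiplied by a factor that stays bounded on $[0,t]$ thanks to the a priori $\LL1$ bounds on $\hat u,\check u$ supplied by Theorem~\ref{thm:main} — plus a multiple of $\norma{\hat u(s)-\check u(s)}_{\LL1}$. For the source one splits
\begin{displaymath}
  \hat p^h(s,x,\hat u(s))\,\hat u^h(s,x)-\check p^h(s,x,\check u(s))\,\check u^h(s,x)=A_1+A_2+A_3,
\end{displaymath}
with $A_1=\bigl(\hat p^h-\check p^h\bigr)(s,x,\hat u(s))\,\hat u^h(s,x)$, $A_2=\check p^h(s,x,\hat u(s))\bigl(\hat u^h-\check u^h\bigr)(s,x)$ and $A_3=\bigl(\check p^h(s,x,\hat u(s))-\check p^h(s,x,\check u(s))\bigr)\check u^h(s,x)$; inserting $\check P^h\bigl(s,x,\int_{\mathcal X}\hat{\mathcal K}_p^h\,\hat u(s)\bigr)$ in $A_1$ splits it into a $\norma{\hat P-\check P}_{\LL\infty}$-piece and a $\bar P_2\norma{(\hat{\mathcal K}_p-\check{\mathcal K}_p)(s)}_{\LL\infty}\norma{\hat u(s)}_{\LL1}$-piece, both carrying, after integration in $x$, the locally bounded factor $\norma{\hat u^h(s)}_{\LL1}$; by~\ref{stab:it:1}, $A_2$ is bounded by $\bigl(\bar P_1+\bar P_2\norma{\hat{\mathcal K}_p}_{\LL\infty}\norma{\hat u(s)}_{\LL1}\bigr)\norma{(\hat u^h-\check u^h)(s,x)}$ and $A_3$ by $\bar P_2\norma{\check{\mathcal K}_p}_{\LL\infty}\norma{\check u^h(s,x)}\,\norma{\hat u(s)-\check u(s)}_{\LL1}$. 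The $q$-difference is decomposed identically using~\ref{stab:it:2}, the only new point being that the piece carrying $\norma{(\hat Q-\check Q)(s,x,\cdot,\cdot)}_{\LL\infty(\reali^k\times\reali^{k_q};\reali^k)}$ is integrable in $x$ precisely because that norm is then taken in $\LL1$ in $x$, which is how the terms $\norma{\hat Q-\check Q}_{\LL1(\LL\infty)}$ and $\norma{\hat{\mathcal K}_q-\check{\mathcal K}_q}_{\LL\infty}$ appear. The boundary difference $\hat{\mathcal U}_b^h-\check{\mathcal U}_b^h$ is handled the same way via~\ref{stab:it:3} and $\bar B\in\LL1(\partial\mathcal X)$, producing the $\norma{\hat U_b-\check U_b}_{\LL1(\LL\infty)}$ and $\norma{\hat{\mathcal K}_u-\check{\mathcal K}_u}_{\LL\infty}$ contributions plus a $\norma{\hat u(s)-\check u(s)}_{\LL1}$ term.

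Substituting these bounds into the estimate of the second paragraph and integrating in $s$, the data-difference pieces assemble — being nondecreasing in $t$, and with their bounded accompanying factors pulled out — into $\OO\,\mathcal D(t)$, where $\mathcal D(t)$ is the bracketed sum of the six norms, while the remainder is $\int_0^t\ell(s)\,\norma{\hat u(s)-\check u(s)}_{\LL1}\dd{s}$ for some $\ell\in\Lloc\infty(I;\reali_+)$. Gronwall's lemma then yields $\norma{\hat u(t)-\check u(t)}_{\LL1}\le\OO\,\mathcal D(t)\,\exp\bigl(\int_0^t\ell(s)\dd{s}\bigr)\le\OO\,\mathcal D(t)\,e^{\OO t}$ on the interval under consideration, which is the asserted estimate; one may alternatively compare the two contraction maps behind Theorem~\ref{thm:main} on a short time interval and then iterate. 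The main obstacle is the splitting step: matching every term of $\hat{\mathcal G}^h-\check{\mathcal G}^h$ and $\hat{\mathcal U}_b^h-\check{\mathcal U}_b^h$ with one of the six prescribed data differences in the \emph{correct} functional norm ($\LL\infty$ for the $P$- and kernel-terms, $\LL1$ in $x$ with $\LL\infty$ in $(u,\eta)$ for the $Q$- and $U_b$-terms) times a coefficient kept bounded by the a priori $\LL1$ estimates, or with $\norma{\hat u(s)-\check u(s)}_{\LL1}$ for the Gronwall loop — a bookkeeping that relies on the $\LL1\cap\LL\infty$ hypotheses on $\bar Q_2,\bar B$ and on keeping the a priori bounds uniform on $[0,t]$; the trace-free Martin--Vovelle solution concept is what makes the scalar estimate absorb the boundary perturbations in the first place.
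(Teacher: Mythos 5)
Your proposal is correct and follows essentially the same route as the paper: first verifying~\ref{hyp:g_a}, \ref{hyp:g_b} and~\ref{ip:(ub)} with $P_1=\bar P_1$, $P_2=\bar P_2\norma{\mathcal K_p^h}_{\LL\infty}$ and the analogous constants, then comparing $\hat u$ and $\check u$ as solutions of the linear problems with frozen coefficients, splitting each coefficient and boundary difference into a data-difference piece, a kernel-difference piece and a $\norma{\hat u(s)-\check u(s)}_{\LL1}$ piece controlled by the local a priori $\LL1$ bounds, and closing with Gronwall. The only (harmless) deviation is that you absorb $\hat p^h\hat u^h-\check p^h\check u^h$ into the source difference and invoke the $\LL1$-in-source scalar stability estimate, whereas the paper keeps the affine structure and applies Lemma~\ref{lem:stability-linear-system} directly, so that the factor $\mathcal P(t)$ and the terms in $\norma{\hat{\mathcal P}^h-\check{\mathcal P}^h}_{\LL1([0,t];\LL\infty(\mathcal X;\reali))}$ appear explicitly; both routes yield the same Gronwall inequality and the stated bound.
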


\noindent The proof is deferred to Section~\ref{sec:AP}.

Finally, we note that~\ref{ip:(v)} and Definition~\ref{def:sol} allow
to immediately extend all results in the present section to the case
$\mathcal{X} = \left(\prod_{i=1}^m I_i\right) \times \reali^n$, as
soon as $I_1, \ldots, I_m$ are (non trivial) real intervals bounded
below. In particular, any of the $I_i$ may well be bounded also above.

\subsection{The Definition of Semi--Entropy Solution Ensures
  Uniqueness}
\label{sec:defin-semi-entr}

This paragraph provides a definition of solution and the consequent
uniqueness statement in a setting more general than the one usually
found in the literature. In particular, it extends the results
in~\cite[Section~3]{Martin} to the slightly more general case of the
unbounded domain $\mathcal{X}$. Indeed, with the
notation~\eqref{eq:33}, consider the fully nonlinear IBVP
\begin{equation}
  \label{eq:14}
  \left\{
    \begin{array}{l@{\qquad}r@{\,}c@{\,}l}
      \partial_t u + \diver_x f (t,x,u)
      =
      g (t,x,u)
      & (t,x)
      & \in
      & I \times \mathcal{X}
      \\
      u (t, \xi) = u_b (t,\xi)
      & (t,\xi)
      & \in
      & I \times \partial\mathcal{X}
      \\
      u (0,x) = u_o (x)
      & x
      & \in
      & \mathcal{X} \,.
    \end{array}
  \right.
\end{equation}
The following definition is the extension to~\eqref{eq:14}
of~\cite[Definition~3.5]{ElenaBoundary2018}, see also~\cite{Martin,
  Vovelle}.

\begin{definition}
  \label{def:mvsol}
  A \emph{semi-entropy solution} to the IBVP~\eqref{eq:14} on the real
  interval $I$ is a map
  $u \in \Lloc\infty (I; \LL1( \mathcal{X}; \reali))$ such that for
  any $\kappa \in \reali$ and for any test function
  $\varphi \in \Cc1 (\mathopen]-\infty,\sup I\mathclose[ \times
  \reali^{n+m}; \reali_+)$
  \begin{align}
    \nonumber
    & \int_I \int_{\mathcal{X}}
      \left(u (t,x) - \kappa \right)^\pm
      \partial_t \varphi (t,x)
      \dd{x} \dd{t}
    \\
    \nonumber
    & + \int_I \int_{\mathcal{X}}
      \sgn{}^\pm (u (t,x) -\kappa)
      \left(f (t,x, u) - f (t,x,\kappa)\right) \cdot
      \grad_x \varphi (t,x)
      \dd{x} \dd{t}
    \\
    \label{eq:mv}
    & + \int_I \int_{\mathcal{X}}
      \sgn{}^\pm (u (t,x) -\kappa)
      \left[
      g\left(t,x,u (t,x) \right) - \diver_x f (t,x,\kappa)
      \right] \, \varphi (t,x) \dd{x}
      \dd{t}
    \\ \nonumber
    & + \int_{\mathcal{X}}
      \left(u_o (x) -
      \kappa \right)^\pm
      \, \varphi (0,x)
      \dd{x}
    \\
    \nonumber
    & + \Lip (f)
      \int_I \int_{\partial \mathcal{X}}
      \left(u_b(t,\xi) - \kappa \right)^\pm \, \varphi
      (t,\xi) \dd\xi \dd{t} \geq 0
  \end{align}
  where $\Lip (f)$ is a Lipschitz constant of the map
  $u \to f (t,x,u)$, uniform in $(t,x) \in I \times \mathcal{X}$.
\end{definition}

\noindent Above, we use the notation $w^+ = \max \{w, 0\}$ and
$w^- = \max \{-w, 0\}$.

A key feature of~\eqref{eq:mv} is its ensuring uniqueness, which we
detail in the next Proposition to ease comparisons with the current
literature.

\begin{proposition}
  \label{prop:Uniqueness}
  Consider the general scalar IBVP~\eqref{eq:14} under the assumptions
  \begin{enumerate}[label=\bf{(f)}, ref=\textup{\textbf{(f)}}, align =
    left]
  \item \label{ip:(f)}
    $f \in \C0 (I \times \mathcal{X} \times \reali; \reali^{n+m})$
    admits continuous derivatives $\partial_u f$,
    $\partial_u \grad_x f$, $D^2_{xx}f$ with $\partial_u f$ and
    $\grad_x f$ bounded in $(t, x) \in I \times \reali_+$ locally in
    $u \in \reali$; $\partial_u \grad_x f$ is bounded.
  \end{enumerate}
  \begin{enumerate}[label=\bf{(g)}, ref=\textup{\textbf{(g)}}, align =
    left]
  \item \label{ip:(lin-G)}
    $g, \partial_u g, \partial_{x_i}g \in \C0 (I\times
    \mathcal{X}\times \reali; \reali)$ and for all
    $(t,x) \in I \times \mathcal{X}$, $\modulo{g (t,x,u)} \leq G (u)$
    for a map $G \in \Lloc\infty (\reali; \reali_+)$ and
    $\partial_u g$ is bounded.
  \end{enumerate}
  \begin{enumerate}[label=\bf{(bd)}, ref=\textup{\textbf{(bd)}}, align
    = left]
  \item \label{ip:(gs-ub)} The boundary datum satisfies
    $u_b \in \LL\infty (I \times \partial\mathcal{X}; \reali)$.
  \end{enumerate}

  \begin{enumerate}[label=\bf{(id)}, ref=\textup{\textbf{(id)}}, align
    = left]
  \item \label{ip:(gs-u0)} The initial datum satisfies
    $u_o \in \LL\infty (\mathcal{X}; \reali)$.
  \end{enumerate}
  \noindent If $u_1,u_2 \in \LL\infty (I\times\mathcal{X}; \reali)$
  both satisfy~\eqref{eq:mv}, then they coincide.
\end{proposition}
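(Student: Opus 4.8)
The plan is to establish uniqueness via the classical Kružkov doubling-of-variables technique, adapted to the initial--boundary value setting as in~\cite{Martin, Vovelle}. Let $u_1, u_2$ be two bounded solutions satisfying~\eqref{eq:mv}. Since both are essentially bounded, say by some constant $M$, assumptions~\ref{ip:(f)} and~\ref{ip:(lin-G)} guarantee that $f$ is Lipschitz in $u$ and $g$ is Lipschitz in $u$ on the relevant range $[-M, M]$, with constants I will call $L_f$ and $L_g$; these uniform bounds are what make the inequalities~\eqref{eq:mv} usable with a single Lipschitz constant $\Lip(f)$. I would write the ``$+$'' inequality~\eqref{eq:mv} for $u_1$ with constant $\kappa = u_2(s,y)$ and the ``$-$'' inequality for $u_2$ with $\kappa = u_1(t,x)$, using a test function $\varphi(t,x,s,y) = \psi\!\left(\tfrac{t+s}{2}, \tfrac{x+y}{2}\right)\, \delta_\rho(t-s)\, \delta_\rho(x-y)$ built from a standard mollifier, where $\psi \in \Cc1$ is nonnegative and supported away from $t = \sup I$. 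Adding the two inequalities and integrating, the terms involving $\partial_t$ of the mollifier cancel against each other in the usual way, and one is left with an inequality for $\int\!\int (u_1 - u_2)^+ \partial_t \psi$ plus flux terms plus source terms plus the boundary and initial contributions.

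The key steps, in order, are: (i) send the mollification parameter $\rho \to 0$, using the $\LL1$-continuity-in-time of the solutions (which holds since $u_i \in \C0(I;\LL1)$ in the present application, though more generally one argues at Lebesgue points) together with the continuity of $f$, $\partial_u f$, $g$ and their derivatives, so that $f(t,x,u_1) - f(t,x,\kappa)$ evaluated at $\kappa = u_2$ becomes $\sgn^+(u_1 - u_2)\big(f(t,x,u_1) - f(t,x,u_2)\big)$, which by~\ref{ip:(f)} is bounded by $L_f |u_1 - u_2|$; similarly the $\div_x f(t,x,\kappa)$ terms combine to a term controlled using boundedness of $\partial_u \grad_x f$; (ii) handle the boundary term: here the crucial point, and the heart of the Martin--Vovelle definition, is that the $\Lip(f)\int_I\int_{\partial\mathcal X}(u_b - \kappa)^\pm \varphi$ terms survive the doubling and, because one uses the SAME boundary datum $u_b$ for both $u_1$ and $u_2$, the boundary contributions from the two inequalities combine to give something $\leq \Lip(f)\int_I\int_{\partial\mathcal X} |u_b - u_b|\, \varphi = 0$ after passing to the limit — no trace of $u_i$ is ever needed; (iii) the initial data terms give $\int_{\mathcal X} (u_o - u_o)^+ \psi(0,\cdot) = 0$ likewise, since both solutions share $u_o$; (iv) assemble a Gronwall inequality of the form
\[
\frac{d}{dt}\!\int_{\mathcal X} (u_1 - u_2)^+(t,x)\,\phi(x)\,dx \;\le\; C \int_{\mathcal X}(u_1 - u_2)^+(t,x)\,\phi(x)\,dx
\]
for suitable nonnegative cutoffs $\phi$ — one must use finite speed of propagation (the $\LL\infty$ bound on $v$, here $\grad_u f$, from~\ref{ip:(f)}) to localize and let the cutoff expand — whence $(u_1 - u_2)^+ \equiv 0$; by symmetry $(u_2 - u_1)^+ \equiv 0$, so $u_1 = u_2$.

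The main obstacle I expect is the careful treatment of the boundary term on $\partial\mathcal X = \partial(\reali_+^m \times \reali^n)$, which is not smooth (it has corners where several of the $m$ half-line constraints are simultaneously active). One must check that the Kružkov boundary integrand behaves well there; the sign condition $(v^h)_i > V > 0$ in~\ref{ip:(v)} — equivalently an outflow/inflow geometry making the characteristics transversal to each face of $\partial\mathcal X$ — is what rescues the argument, ensuring the relevant one-sided traces exist in the weak sense implicit in~\eqref{eq:mv} and that the boundary term indeed only sees $u_b$. A secondary technical point is justifying the $\rho \to 0$ passage uniformly up to the boundary and up to $t = 0$; this is where one leans on $u_i \in \LL\infty(I\times\mathcal X)$ together with, in the application, $u_i \in \C0(I;\LL1)$, and on the explicit decomposition of $\reali^{n+m}$ coordinates so that the mollifier in the $x$-variables can be taken product-form and the boundary faces treated one at a time.
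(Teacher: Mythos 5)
Your sketch follows the same route the paper itself takes: the paper does not write the argument out, but defers it to \cite[Lemma~16, Lemma~17 and Theorem~18]{Martin}, i.e.\ Kru\v zkov doubling of variables with the semi-entropies $(\cdot)^\pm$, the trace-free boundary term of Definition~\ref{def:mvsol}, and a Gronwall step whose only new difficulty here is the unboundedness of $\mathcal{X}$ --- exactly the expanding-cutoff, finite-speed-of-propagation localization you describe in step (iv). Your steps (i)--(iv) are the correct skeleton; the only imprecision is that the doubled boundary terms do not literally cancel but are bounded by $\Lip (f)\int_I\int_{\partial\mathcal{X}}(u_{b,1}-u_{b,2})^{+}\,\varphi$, which vanishes here because the two solutions share the same datum, so indeed no trace of $u_1,u_2$ is ever needed.

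One point of your commentary should be dropped, because as written it would make the proof rest on a hypothesis the Proposition does not have. You invoke the inflow condition of~\ref{ip:(v)} ($(v^h)_i>V>0$ on the boundary) to ``rescue'' the boundary/corner treatment. Proposition~\ref{prop:Uniqueness} is stated for the general flux $f(t,x,u)$ of~\eqref{eq:14} under~\ref{ip:(f)}, \ref{ip:(lin-G)}, \ref{ip:(gs-ub)}, \ref{ip:(gs-u0)} only; no transversality of characteristics is assumed, and none is needed. The whole point of the $\Lip(f)$-weighted boundary term in~\eqref{eq:mv} is that it dominates the normal flux discrepancy $\bigl(f(t,\xi,u)-f(t,\xi,\kappa)\bigr)\cdot\nu$ for arbitrary $\kappa$, so the doubling argument closes at the boundary without any inflow/outflow geometry and without weak traces; the corners of $\partial\mathcal{X}$ form a set of zero surface measure and are handled face by face, exactly as you propose for the product-form mollifier. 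With that caveat removed (and the parenthetical remark that $\LL\infty$ solutions satisfying~\eqref{eq:mv} are compared at Lebesgue points in time, since $\C0(I;\LL1)$ regularity is not assumed here), your proposal coincides with the proof the paper points to.
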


This Proposition slightly extends~\cite[Theorem~18]{Martin}. However,
its proof relies on merely technical modifications to~\cite[Lemma~16
and Lemma~17]{Martin}, due to the present unboundedness of the domain
$\mathcal{X}$. Very similar techniques are employed also
in~\cite[\S~2.6 and \S~2.7]{MalekEtAlBook}, which is devoted to a
hyperplane.

\section{Sample Applications}
\label{sec:App}

The structure of~\eqref{eq:1} is sufficiently flexible to comprise a
variety of applications of mathematics to biology, in particular to
epidemiology. The general results in the preceding section can be
applied to well known models in the literature, see for
instance~\cite{MR2496711, MR3013074, ColomboGaravello2017_ControlBio,
  PerthameBook}. In the next paragraphs, we select sample applications
based on analytic structure that differ in the number of equations, in
the number of independent variables, in the presence of (partial)
boundaries and in the role of non local terms. In particular,
\S~\ref{subs:SP} deals with a recently proposed model,
see~\cite{preprint}, while the subsequent ones refer to other
classical models that fit into~\eqref{eq:1}.

\subsection{The Spreading of an Epidemic}
\label{subs:SP}

During the spreading of an epidemic, within a population we
distinguish among individuals that are Susceptible, Infective,
Hospitalized or Recovered, see~\cite{preprint}. Each of these
populations is described through its time, age and space dependent
density: $S = S (t,a,y)$, $I = I (t,a,y)$, $H = H(t,a,y)$ and
$R = R (t,a,y)$, respectively. Remark that the distinction between $I$
and $H$ consists in the $H$ individuals that, being hospitalized or
quarantined, do not infect anyone although being ill. In its most
general form, the model presented in~\cite[\S~2]{preprint} to describe
the evolution of these populations, reads
\begin{equation}
  \label{eq:4}
  \!\!\!
  \left\{
    \begin{array}{c@{\,}c@{\,}c@{\,}c@{\,}c@{\,}c@{\,}c@{\,}%
      c@{\,}r@{\,}c@{\,}c@{\,}c@{\,}c@{\,}c@{\,}c@{\,}}
      \partial_t S
      & +
      & \partial_a S
      & +
      & \diver_y (v_S\, S)
      & +
      & \mu_S \, S
      & =
      & - (\rho \otimes I) S
      \\
      \partial_t I
      & +
      & \partial_a I
      & +
      & \diver_y (v_I\, I)
      & +
      & \mu_I \, I
      & =
      & (\rho \otimes I) S
      & -
      & \kappa \, I
      & -
      & \vartheta \, I
      \\
      \partial_t H
      & +
      & \partial_a H
      &
      &
      & +
      & \mu_H \, H
      & =
      &
      & +
      & \kappa \, I
      &
      &
      & -
      & \eta \, H
      \\
      \partial_t R
      & +
      & \partial_a R
      & +
      & \diver_y (v_R\, R)
      & +
      & \mu_R \, R
      & =
      &
      &
      &
      & +
      & \vartheta\, I
      & +
      & \eta \, H
    \end{array}
  \right.
  \qquad\qquad
  \begin{array}{r@{\,}c@{\,}l@{}}
    t
    & \in
    & \reali_+
    \\
    a
    & \in
    & \reali_+
    \\
    y
    & \in
    & \reali^2
  \end{array}
\end{equation}
where the propagation of the infection is described by
\begin{equation}
  \label{eq:6}
  \left(\rho \otimes I (t)\right) (a,y) =
  \int_{\reali_+} \int_{\reali^2}
  \rho (a, a', y, y') \, I (t,a', y') \, \dd{y'} \, \dd{a'} \,.
\end{equation}
Here, the function $\rho$ plays the key role of describing how
infective individuals infect others, at which distance and with which
dependence on age or time, see~\cite{preprint} for more
details. In~\eqref{eq:4}, $v_S = v_S (t,a,y)$, $v_I = v_I (t,a,y)$ and
$v_R = v_R (t,a,y)$ describe the time, age and, possibly, space
dependent movements of the $S$, $I$ and $R$ individuals, while
$\mu_S = \mu_S (t,a,y)$, $\mu_I = \mu_I (t,a,y)$,
$\mu_H = \mu_H (t,a,y)$ and $\mu_R = \mu_R (t,a,y)$ are the
mortalities. The term $\kappa = \kappa (t,a,y)$ describes how quickly
infected individuals are confined to quarantine;
$\vartheta = \vartheta (t,a,y)$, respectively $\eta = \eta (t,a,y)$,
quantifies the speed at which infected, respectively quarantined,
individuals recover.

System~\eqref{eq:4} needs to be supplemented by boundary and initial
data:
\begin{equation}
  \label{eq:7}
  \left\{
    \begin{array}{r@{\,}c@{\,}l}
      S (t,a=0, y)
      & =
      & S_b (t,y)
      \\
      I (t,a=0, y)
      & =
      & 0 
      \\
      H (t,a=0, y)
      & =
      & 0 
      \\
      R (t,a=0, y)
      & =
      & 0 
    \end{array}
  \right.
  \qquad \mbox{ and } \qquad
  \left\{
    \begin{array}{r@{\,}c@{\,}l}
      S (t=0,a,y)
      & =
      & S_o (a,y)
      \\
      I (t=0,a,y)
      & =
      & I_o (a,y)
      \\
      H (t=0,a,y)
      & =
      & H_o (a,y)
      \\
      R (t=0,a,y)
      & =
      & R_o (a,y) \,.
    \end{array}
  \right.
\end{equation}
Note that a more precise boundary term, though not amenable to be used
in the short term, might be a natality term of the form
\begin{displaymath}
  S (t,a=0,y)
  =
  \int_{\reali_+} b (t,a',y) \, S (t,a',y) \dd{a'}
\end{displaymath}
which also fits in the framework of Theorem~\ref{thm:main} and
Theorem~\ref{thm:stab}. Note
that~\eqref{eq:4}--\eqref{eq:6}--\eqref{eq:7} is a system with
independent variables $(a,y)$ where $a$ is bounded below while $y$ is
in $\reali^2$ and no second order differential operator is
present. The model~\eqref{eq:4}--\eqref{eq:6}--\eqref{eq:7} fits
into~\eqref{eq:32} in the form~\eqref{eq:37} setting
$\mathcal{X} = \reali_+ \times \reali^2$, $x = (a,y)$, $\xi = (0,y)$
and
\begin{displaymath}
  \begin{array}{@{}r@{\;}c@{\;}l@{\qquad}r@{\;}c@{\;}l@{\qquad}%
    r@{\;}c@{\;}l@{\qquad}r@{\,}c@{\;}l@{}}
    k
    & =
    & 4
    & m
    & =
    & 1
    & n
    & =
    & 2
    \\
    u^1
    & =
    &S
    & u^2
    & =
    & I
    & u^3
    & =
    & H
    & u^4
    & =
    & R
    \\
    w^1
    & =
    &S (t)
    & w^2
    & =
    & I (t)
    & w^3
    & =
    & H (t)
    & w^4
    & =
    & R (t)
    \\
    v^1
    & =
    & \left[
      \begin{array}{@{}c@{}}
        1\\v_S
      \end{array}
    \right]
    & v^2
    & =
    & \left[
      \begin{array}{@{}c@{}}
        1\\v_I
      \end{array}
    \right]
    & v^3
    & =
    & \left[
      \begin{array}{@{}c@{}}
        1\\0
      \end{array}
    \right]
    & v^4
    & =
    & \left[
      \begin{array}{@{}c@{}}
        1\\v_R
      \end{array}
    \right]
    \\
    u^1_b
    & =
    & S_b
    & u^2_b
    & =
    & 0
    & u^3_b
    & =
    & 0
    & u^4_b
    & =
    & 0
    \\
    u_o^1
    & =
    & S_o
    & u_o^2
    & =
    & I_o
    & u_o^3
    & =
    & H_o
    & u_o^4
    & =
    & R_o
  \end{array}
\end{displaymath}
\begin{displaymath}
  \begin{array}{rcl}
    p^1(t,x,\Lambda)
    & =
    & - \mu_S - \Lambda
    \\
    p^2(t,x,\Lambda)
    & =
    & - \mu_I - \kappa - \vartheta
    \\
    p^3(t,x,\Lambda)
    & =
    & - \mu_H - \eta
    \\
    p^4(t,x,\Lambda)
    & =
    & - \mu_R
  \end{array}
  \qquad
  \begin{array}{rcl}
    q^1(t, x, u,\Lambda)
    & =
    & 0
    \\
    q^2(t, x, u,\Lambda)
    & =
    & \Lambda \; u_1
    \\
    q^3(t, x, u,\Lambda)
    & =
    & \kappa \, u_2
    \\
    q^4(t, x, u,\Lambda)
    & =
    & \vartheta \, u_2 + \eta \, u_3
  \end{array}
\end{displaymath}
and the only $2$ non zero entries in $\mathcal{K}_p$ and
$\mathcal{K}_q$ are valued $\rho$, so that
\begin{eqnarray*}
  \int_{\mathcal{X}}
  \mathcal{K}_p^1 \left(t,(a,y),(a',y')\right)
  \; w (a',y') \dd{a'} \dd{y'}
  & =
  & \left(\rho \otimes I (t)\right) (a,y) \,,
  \\
  \int_{\mathcal{X}}
  \mathcal{K}_q^2 \left(t,(a,y),(a',y')\right)
  \; w (a',y') \dd{a'} \dd{y'}
  & =
  & \left(\rho \otimes I (t)\right) (a,y) \,.
\end{eqnarray*}

\begin{proposition}
  \label{prop:spreading-pandemic}
  Set $\mathcal{I} = [0,T]$ or $\mathcal{I} = \reali_+$.  Let
  $v_S, v_I, v_R \in (\C1 \cap \LL\infty) (\mathcal{I} \times
  \mathcal{X}; \reali^2)$ with divergence in
  $\LL1 (\mathcal{I}; \LL\infty (\mathcal{X};\reali))$;
  $\rho \in \LL\infty (\reali_+^2 \times \reali^4; \reali)$ and
  $S_b \in (\LL1 \cap \LL\infty) (\mathcal{I} \times \reali^2;
  \reali)$. Let $\mu_S$, $\mu_I$, $\mu_H$, $\mu_R$, $\vartheta$,
  $\eta$ and $\kappa$ be positive and in $\LL\infty$. Fix an initial
  datum $(S_o, I_o, H_o, R_o)$ in $\LL1 (\mathcal{X};
  \reali^4)$. Then:
  \begin{enumerate}
  \item Problem~\eqref{eq:4}--\eqref{eq:6}--\eqref{eq:7} fits into
    Theorem~\ref{thm:main} and Theorem~\ref{thm:stab} and hence admits
    a solution
    $(S,I,H,R) \in \C0\left([0,T_*]; \L1 (\mathcal{X};
      \reali^4)\right)$, for a $T_*>0$.

  \item If the initial and boundary data $(S_o, I_o, H_o, R_o)$ and
    $S_b$ are non negative, if $\rho\geq 0$ and if the constants
    $\kappa, \eta, \theta$ are non negative, then
    Corollary~\ref{cor:piu} applies, ensuring that the solution is non
    negative: $(S,I,H,R) (t) \in \L1 (\mathcal{X}; \reali_+^4)$, for
    all $t \in [0,T_*]$.

  \item If, in addition to what required at 2., the mortalities
    $\mu_S, \mu_I, \mu_H, \mu_R$ are non negative,
    then~\Cref{cor:assumpt-defin-result} applies, so that the solution
    is defined globally in time.

  \item If, in addition to what required at 3., $(S_o, I_o, H_o, R_o)$
    in $\L\infty (\mathcal{X}; \reali_+^4)$, then the solution is
    locally bounded:
    $(S,I,H,R) \in \L\infty (\mathcal{J} \times \mathcal{X};
    \reali_+^4)$, for any bounded interval
    $\mathcal{J} \subseteq \mathcal{I}$. Hence, $(S,I,H,R)$ is the
    unique solution to~\eqref{eq:4} in the sense of
    Definition~\ref{def:sol}.
  \end{enumerate}
\end{proposition}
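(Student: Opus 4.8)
The strategy is essentially a dictionary translation: verify that the data of~\eqref{eq:4}--\eqref{eq:6}--\eqref{eq:7}, once recast through the substitutions listed above, satisfy the hypotheses of Theorem~\ref{thm:main}, Corollary~\ref{cor:piu}, Corollary~\ref{cor:assumpt-defin-result} and Theorem~\ref{thm:stab}, and then quote those results. For item~1, the main points are: (i)~\ref{ip:(v)} holds because the velocity fields $v^h$ have the form $(1, v_\bullet)$ with $v_S, v_I, v_R \in (\C1\cap\LL\infty)$ and $\div_y v_\bullet \in \LL1(\mathcal I;\LL\infty)$, while the first component equals $1$, so the transversality condition $(v^h)_1 = 1 > V$ on $\partial\mathcal X = \{0\}\times\reali^2$ holds with, say, $V = 1/2$; (ii)~\ref{hyp:g_a} follows since each $p^h$ is an affine function of the $\LL\infty$ coefficients $\mu_\bullet, \kappa, \vartheta, \eta$ and of $\Lambda = \rho\otimes I(t)$, and $\norma{\rho\otimes I(t)} \le \norma\rho_{\LL\infty}\norma{I(t)}_{\LL1}$ gives exactly the required affine bound and Lipschitz estimate in $w$ with $P_1$ depending on the $\LL\infty$ norms of the mortalities and $P_2 = \norma\rho_{\LL\infty}$; (iii)~\ref{hyp:g_b} holds because $q^1 = 0$, $q^3 = \kappa u_2$ and $q^4 = \vartheta u_2 + \eta u_3$ are linear in $u$ with bounded coefficients (so $Q_1$ absorbs them, $Q_2 \equiv 0$, $Q_3 = 0$ for those components), while $q^2 = \Lambda\, u_1$ contributes the bilinear term controlled by $Q_3 = \norma\rho_{\LL\infty}$; (iv)~\ref{ip:(ub)} holds with $B = \max\{\norma{S_b}_{\LL\infty}, \chi\cdot 0\}$ — more precisely $u_b^1 = S_b \in (\LL1\cap\LL\infty)(\reali_+\times\partial\mathcal X)$ and the other $u_b^h$ vanish, so one takes $B(\xi)$ to dominate $\modulo{S_b(t,\xi)}$ uniformly in $t$ (using $\LL\infty$ in time, $\LL1\cap\LL\infty$ in $\xi$); (v)~\ref{ip:(u0)} is the assumption $(S_o,I_o,H_o,R_o)\in\LL1(\mathcal X;\reali^4)$. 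Once these are checked, Theorem~\ref{thm:main}, part~\ref{thm:it:1}, gives the local solution on $[0,T_*]$.

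For item~2, one checks the extra hypotheses of Corollary~\ref{cor:piu} for every $h$: \ref{hyp:g-positivity} requires $q^h(t,x,u,w)\ge 0$ for $u\in\reali_+^k$, $w\in\LL1(\mathcal X;\reali_+^k)$, which holds because $q^1 = 0$, $q^2 = (\rho\otimes I)u_1 \ge 0$ when $\rho\ge 0$ and the components are non-negative, $q^3 = \kappa u_2 \ge 0$ and $q^4 = \vartheta u_2 + \eta u_3 \ge 0$ when $\kappa, \vartheta, \eta \ge 0$; \ref{ip:(ub-2)} requires $u_b^h \ge 0$, true since $S_b \ge 0$ by hypothesis and the other boundary data are zero; \ref{ip:(u0plus)} is the non-negativity of the initial data. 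Hence $(S,I,H,R)(t)\in\LL1(\mathcal X;\reali_+^4)$ for $t\in[0,T_*]$.

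For item~3, the point is to verify the growth bound~\eqref{eq:36} of Corollary~\ref{cor:assumpt-defin-result}. Summing the four equations, the quadratic cross term $-(\rho\otimes I)S$ in the $S$-equation cancels against the $+(\rho\otimes I)S$ in the $I$-equation, so that $\sum_h\bigl(p^h u^h + q^h\bigr) = -\mu_S S - \mu_I I - \mu_H H - \mu_R R \le 0$ once all the mortalities are non-negative (the transfer terms $-\kappa I + \kappa I$, $-\vartheta I + \vartheta I$, $-\eta H + \eta H$ also cancel). Thus~\eqref{eq:36} holds with $C_1 \equiv 0$ and $C_2 \equiv 0$, and Corollary~\ref{cor:assumpt-defin-result} yields a globally defined solution. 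For item~4, one needs to stay within the $\LL\infty$ framework: with $(S_o,I_o,H_o,R_o)\in\LL\infty(\mathcal X;\reali_+^4)$ and $S_b\in\LL\infty$, one propagates an $\LL\infty$ bound along characteristics — since each $v^h$ has unit speed in $a$, the method of characteristics gives, on any bounded interval $\mathcal J$, an a priori bound $\norma{(S,I,H,R)(t)}_{\LL\infty} \le C(\mathcal J)$ from the linear Gronwall structure of the right-hand side once the $\LL1$ norm is already controlled (it is, by item~3, which bounds $\norma{I(t)}_{\LL1}$ and hence $\norma{\rho\otimes I(t)}_{\LL\infty}$ uniformly on $\mathcal J$). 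With $(S,I,H,R)\in\LL\infty(\mathcal J\times\mathcal X;\reali^4)$ in hand, the uniqueness statement~\ref{thm:it:uni} of Theorem~\ref{thm:main} applies and identifies this as the unique solution in the sense of Definition~\ref{def:sol}; the applicability of Theorem~\ref{thm:stab} follows likewise by noting that the representations~\eqref{eq:37} hold here with $\mathcal K_p^1 = \mathcal K_q^2 = \rho \in \LL\infty$ and the remaining kernels zero, and that $P^h, Q^h, U_b^h$ obey the bounds~\ref{stab:it:1}, \ref{stab:it:2}, \ref{stab:it:3} for exactly the same reasons as \ref{hyp:g_a}, \ref{hyp:g_b}, \ref{ip:(ub)} above.

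The only genuinely non-routine step is item~4: Theorem~\ref{thm:main} as stated produces a solution in $\C0(I_*;\LL1)$, so one must separately establish the $\LL\infty$ bound needed to invoke uniqueness. I expect this to be handled by a characteristics/Duhamel argument — writing $u^h(t,x)$ as the initial or boundary datum transported along the flow of $v^h$ and multiplied by an integrating factor $\exp\!\bigl(\int p^h\bigr)$ plus a Duhamel integral of $q^h$ — combined with the already-available $\LL1$ control of $\norma{I(t)}_{\LL1}$ to bound the non-local coefficient $\rho\otimes I(t)$ in $\LL\infty$, and then a scalar Gronwall inequality in $\norma{\cdot}_{\LL\infty}$. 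Everything else is a mechanical matching of constants.
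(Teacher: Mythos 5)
Your proposal is correct and follows essentially the same route as the paper: checking the structural hypotheses (the paper does this via the barred conditions \ref{stab:it:1}--\ref{stab:it:3} of Theorem~\ref{thm:stab}, which subsume \ref{hyp:g_a}, \ref{hyp:g_b}, \ref{ip:(ub)}, with the same constants you identify), positivity by inspection, global existence with $C_1\equiv 0$, $C_2\equiv 0$ in~\eqref{eq:36}, and uniqueness via~\ref{thm:it:uni} once an $\LL\infty$ bound is available. For that last step the paper does precisely what you sketch, except that instead of re-deriving the characteristics/Gronwall estimate it cites the already-proved bound~\eqref{eq:5} of Lemma~\ref{lem:L1inf}, applied repeatedly to $S$, $I$, $H$, $R$, using that $I \in \C0(\mathcal{I};\LL1)$ makes the nonlocal coefficient $\rho\otimes I(t)$ bounded on bounded time intervals.
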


\noindent The proof is deferred to Section~\ref{sec:AP}.

As pointed out in~\eqref{eq:4}, a natural control parameter is the
coefficient $\kappa = \kappa (t,a,y)$, which determines how quickly
infective individuals are isolated in quarantine.

A first natural choice for a \emph{cost} to be minimized by a careful
choice of $\kappa$ is the total number of deaths on the time interval
$[0,T]$, namely
\begin{displaymath}
  \mathcal{D} (\kappa)
  =
  \int_0^T \! \int_{\reali_+} \! \int_{\reali^2}
  \left(\mu_I (t,a,y) \, I (t,a,y) + \mu_H  (t,a,y) \, H (t,a,y)\right)
  \, \dd{y} \, \dd{a} \, \dd{t} \,.
\end{displaymath}
Proposition~\ref{prop:spreading-pandemic} ensures that the cost
$\mathcal{D}$ is a continuous function of $\kappa$. Hence, standard
compactness arguments, for instance in the case of a constant
$\kappa$, ensure the existence of an optimal control. Moreover, the
Lipschitz continuity, again ensured by
Proposition~\ref{prop:spreading-pandemic}, allows to use standard
optimization algorithms to actually find near--to--optimal controls.

A second reasonable choice is to minimize the maximal number of
infected individuals
$\norma{I}_{\L\infty ([0,T] \times \reali_+ \times \reali^2)}$, aiming
at minimizing the maximal stress on the health care system. Again, the
continuity proved in Proposition~\ref{prop:spreading-pandemic} allows
to use Weierstrass type arguments to exhibit the existence of optimal
controls, thanks to the lower semicontinuity of the $\L\infty$ norm
with respect to the $\L1$ distance.

\subsection{Cell Growth and Division}
\label{subs:GandD}

Consider the classical model~\cite[Formula~(2)]{BELL1967329} devoted
to the description of cell growth and cell division, as extended
in~\cite[Formul\ae~(1.5)--(1.7)]{TuckerZimmerman1988}:
\begin{equation}
  \label{eq:22}
  \left\{
    \begin{array}{l}
      \partial_t N + \partial_a N + \diver_y (V (a,y)\, N)
      =
      -
      \lambda(a,y) \, N
      \\
      N (t,0,y)
      =
      \int_{\reali_+}\int_{\reali^n}
      \beta \left((a',y'),y, N (t,a',y')\right)
      \dd{y'} \dd{a'}
    \end{array}
  \right.
\end{equation}
where $t \in \reali_+$ is time, $a \in \reali_+$ is age,
$(y_1, \ldots, y_n) \in \reali^n$ is an $n$--tuple of structure
variables, $\lambda = \lambda (a,y)$ is the age-- and state--specific
loss rate, $N = N (t,a,y)$ is the population density and $V = V (a,y)$
is the (time independent) individual cell's growth rate. Therefore,
\eqref{eq:22} fits into~\eqref{eq:32} setting
\begin{displaymath}
  \begin{array}{@{}c@{}}
    k = 1
    \,,\quad
    n \in \naturali
    \,,\quad
    m = 1
    \,,\quad
    \mathcal{X} = \reali_+ {\times} \reali^n
    \,,\quad
    x = (a, y)
    \,,\quad
    \xi = (0,y)
    \,,\quad
    u = N
    \,,\quad
    w = N (t)
    \,,
    \\[6pt]
    v \left(t,(a,y)\right)
    =
    V (a,y)
    \,,\quad
    p\left(t, (a,y), N (t)\right)
    =
    -\lambda (a,y)
    \,,\quad
    q\left(t, (a,y), N, N (t)\right)
    =
    0\,,
    \\[6pt]
    u_b(t,y,N, N (t))
    =
    \displaystyle\int_{\reali^n} \int_{\reali_+}
    \beta\left((a', y'), y, N (t, a', y')\right) \dd{a'} \dd{y'} \,.
  \end{array}
\end{displaymath}
Concerning the assumptions of Theorem~\ref{thm:main}, we have
that~\ref{ip:(v)} is satisfied as soon as
$V \in (\C1 \cap \LL\infty)(\mathcal{X}; \reali^n)$ and
$\diver V \in \LL1 (I; \LL\infty (\mathcal{X};
\reali))$. Condition~\ref{hyp:g_a} is met whenever
$\lambda \in \C0 \cap \LL\infty$, with
$P_1 = \norma{\lambda}_{\LL\infty(\reali_+ \times \reali^n; \reali)}$
and $P_2=0$. Assumption~\ref{hyp:g_b} trivially holds. To comply
with~\ref{ip:(ub)}, we need $\beta$ to be Lipschitz continuous and
sublinear in its fourth argument, i.e.,
$\beta ((a',y') , y, w) \leq B (y) \left(1+ \modulo{w}\right)$ for a
suitable $B \in \LL1 \cap \LL\infty$. Under these assumptions,
Theorem~\ref{thm:main} applies to~\eqref{eq:22}.

As soon as $\beta \geq 0$ and the initial datum is non negative, also
Corollary~\ref{cor:piu} applies, ensuring the solution is non
negative. It is reasonable to assume from the biological point of view
that $\lambda \geq 0$, so that also
Corollary~\ref{cor:assumpt-defin-result} applies (with $C_1 = 0$,
$C_2 = 0$), ensuring that the solution is globally defined in time.
It is straightforward to see that, as soon as $\beta$ is linear in its
third argument, it is possible to apply also Theorem~\ref{thm:stab}.


\subsection{An Age and Phenotypically Structured Population Model}
\label{subs:AP}

Within the general form~\eqref{eq:1} we recover also the recent
model~\cite[Formula~(1)]{NordmannPerthameTaing2017}, namely
\begin{equation}
  \label{eq:12}
  \left\{
    \begin{array}{@{\,}l@{}}
      \displaystyle
      \varepsilon\, \partial_t M_\varepsilon
      +
      \partial_a \left(A (a,y) \, M_\varepsilon\right)
      =
      -\left(
      \int_{\reali_+} \!\int_{\reali^n} \!
      M_\varepsilon (t,a',y') \dd{a'} \dd{y'}
      +
      d (a,y)
      \right) M_\varepsilon
      \\
      \displaystyle
      M_\varepsilon (t,a=0,y)
      =
      \dfrac{1}{A (a=0,y) \, \varepsilon^n}
      \int_{\reali_+} \! \int_{\reali^n}
      \mathcal{M} \! \left(\frac{y'-y}{\varepsilon}\right) \,
      b (a',y') \,
      M_\varepsilon (t, a', y')
      \dd{a'} \dd{y'}
      \\
      M_\varepsilon (t=0, a,y)
      =
      M_\varepsilon^0 (a,y) \,.
    \end{array}
  \right.
\end{equation}
Here, the dependent variable $M_\varepsilon = M_\varepsilon (t,a,y)$
describes the population density at time $t$, of age $a \in \reali_+$
and trait $x \in \reali^n$, so that
$\int_{\reali_+} \!\int_{\reali^n} M_\varepsilon (t,a,y) \dd{a}
\dd{x}$ is the total population. The growth function $A = A (a,y)$
describes the age and trait dependent aging. The mortality, on the
right hand side of the first equation in~\eqref{eq:12}, both depends
on the crowding, due to intraspecies competition, and on a given
mortality $d = d (a,y)$. The function $b = b (a,y)$ quantifies the
natality and is modulated by the mutation probability kernel
$\mathcal{M}$, both defining the boundary term along $a=0$, see
also~\cite{MischlerPerthameRyzhik_2002}.

Note that the IBVP~\eqref{eq:12} can be seen as a prototype equation
for various other similar models, see for
instance~\cite[Formula~(2.8)]{MeeardTran_2009}
.

\smallskip

The above system~\eqref{eq:12} fits into~\eqref{eq:32} setting
$\mathcal{X} = \reali_+ \times \reali^n$ and
\begin{equation}
  \label{eq:13}
  \begin{array}{c}
    k=1 \,,\quad
    m=1 \,,\quad
    n \geq 1\,,\quad
    x = (a,y) \,,\quad
    \xi = (0,y) \,,\quad
    u = M_\eps \,,\quad
    w = M_\eps (t) \,,\quad
    \\
    \displaystyle
    v = \left[
    \begin{array}{@{}c@{}}
      A (a,y)/\eps
      \\
      0
    \end{array}
    \right]
    \,,\qquad
    p (t, x, w)
    =
    - \dfrac{1}{\eps}
    \int_{\reali^n} w(x) \dd{x} - \dfrac{d (x)}{\eps}
    \,,\qquad
    q (t, x, u, w)
    =
    0\,,
    \\
    \displaystyle
    u_b (t, y, w)
    =
    \dfrac{1}{A (a=0,y) \, \varepsilon^n}
    \int_{\reali_+} \! \int_{\reali^n}
    \mathcal{M} \! \left(\frac{y'-y}{\varepsilon}\right) \,
    b (a',y') \,
    w(a', y')
    \dd{a'} \dd{y'} \,.
  \end{array}
\end{equation}

\begin{proposition}
  \label{prop:an-age-phen}
  Let $A \in (\C1 \cap \LL\infty)(\mathcal{X}; \reali)$ with
  $\inf A >0$ and $\div_{a,y}A \in \LL\infty (\mathcal{X};
  \reali)$. Let $d \in \LL\infty (\reali^n; \reali)$,
  $\mathcal{M} \in \LL\infty (\reali^n; \reali)$ such that
  $\mathcal{M} (\eta) = 0$ whenever $\norma{\eta} \geq r$, for a fixed
  $r>0$. Moreover,
  $b \in \LL\infty (\reali_+ \times \reali^n; \reali)$ such that
  $\modulo{b (a,y)} \leq \left(1+\norma{y}\right)^{-(n+1)}$. Then, for
  any initial datum
  $u_o \in (\LL1 \cap \LL\infty)(\mathcal{X}; \reali)$,
  Theorem~\ref{thm:main} applies to the Cauchy Problem
  for~\eqref{eq:12} with datum $u_o$. If moreover $u_o \geq 0$,
  $A (0,y) \geq 0$, $\mathcal{M}\geq0$ and $b\geq 0$,
  Corollary~\ref{cor:piu} and Corollary~\ref{cor:assumpt-defin-result}
  apply, ensuring that the solution is non negative and defined on all
  $\reali_+$.
\end{proposition}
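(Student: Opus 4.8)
The plan is to argue by direct verification of hypotheses. One first recasts the IBVP for~\eqref{eq:12} in the abstract form~\eqref{eq:32} through the identifications~\eqref{eq:13}, and then checks, one at a time, the hypotheses~\ref{ip:(v)}, \ref{hyp:g_a}, \ref{hyp:g_b}, \ref{ip:(ub)} and~\ref{ip:(u0)} of \Cref{thm:main}; afterwards, under the additional sign conditions, one checks the hypotheses of \Cref{cor:piu} and then of \Cref{cor:assumpt-defin-result}. Throughout $\eps>0$ is fixed, $I=\reali_+$, $\mathcal X=\reali_+\times\reali^n$ and $\partial\mathcal X=\{0\}\times\reali^n$, which is identified with $\reali^n$.

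The routine assumptions come first. For~\ref{ip:(v)}: since $v=(A/\eps,\,0)$, the regularity $A\in(\C1\cap\LL\infty)(\mathcal X;\reali)$ transfers verbatim to $v$; the divergence $\diver_x v=(\partial_a A)/\eps$ lies in $\LL\infty(\mathcal X;\reali)\subset\Lloc1(I;\LL\infty(\mathcal X;\reali))$ by the hypothesis on $\div_{a,y}A$; and the component of $v$ along the only direction bounded below, the age $a$, equals $A/\eps\ge(\inf A)/\eps=:V>0$. For~\ref{hyp:g_a}: with $p(t,x,w)=-\tfrac1\eps\int_{\mathcal X}w(x')\dd{x'}-d(x)/\eps$ one has $\modulo{p(t,x,w)}\le\norma{d}_{\LL\infty}/\eps+\norma{w}_{\LL1(\mathcal X;\reali)}/\eps$ and $\modulo{p(t,x,w)-p(t,x,w')}\le\norma{w-w'}_{\LL1(\mathcal X;\reali)}/\eps$, so~\ref{hyp:g_a} holds with $P_1=\norma{d}_{\LL\infty}/\eps$, $P_2=1/\eps$ ($(t,x)\mapsto p(t,x,w)$ being continuous for each fixed $w$, taking $d$ continuous). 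Since $q\equiv0$, \ref{hyp:g_b} is trivial (take $Q_1=Q_3=1$, $Q_2\equiv0$), and~\ref{ip:(u0)} is contained in the hypothesis $u_o\in(\LL1\cap\LL\infty)(\mathcal X;\reali)$.

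The only assumption requiring a genuine estimate — and the step I expect to be the crux — is~\ref{ip:(ub)}, namely producing a weight $B\in(\LL1\cap\LL\infty)(\partial\mathcal X;\reali_+)$ controlling the natality/mutation operator $u_b$ of~\eqref{eq:13}. Measurability of $(t,\xi)\mapsto u_b(t,\xi,w)$ is immediate, being an integral operator against the bounded kernel $(y,a',y')\mapsto\tfrac1{A(0,y)\,\eps^n}\,\mathcal M\!\left((y'-y)/\eps\right)b(a',y')$. For the two bounds, one uses $\mathcal M\in\LL\infty$ together with $\mathcal M(\eta)=0$ for $\norma{\eta}\ge r$ and $\inf A>0$ to estimate $\modulo{u_b(t,y,w)}$ by $\tfrac{\norma{\mathcal M}_{\LL\infty}}{(\inf A)\,\eps^n}\int_{\reali_+}\int_{\norma{y'-y}\le r\eps}\modulo{b(a',y')}\,\modulo{w(a',y')}\dd{y'}\dd{a'}$, and then invokes the decay hypothesis, which gives $\modulo{b(a',y')}\le(1+\norma{y'})^{-(n+1)}\le\bigl(1+(\norma{y}-r\eps)_+\bigr)^{-(n+1)}$ over the effective domain $\norma{y'-y}\le r\eps$; pulling this factor out yields $\modulo{u_b(t,\xi,w)}\le B(\xi)\,\norma{w}_{\LL1(\mathcal X;\reali)}\le B(\xi)\bigl(1+\norma{w}_{\LL1(\mathcal X;\reali)}\bigr)$, and identically the Lipschitz estimate with the same $B$, where $B(y)=\tfrac{\norma{\mathcal M}_{\LL\infty}}{(\inf A)\,\eps^n}\bigl(1+(\norma{y}-r\eps)_+\bigr)^{-(n+1)}$. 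This $B$ is plainly bounded on $\partial\mathcal X$; crucially it is integrable over $\partial\mathcal X\cong\reali^n$ precisely because the decay exponent $n+1$ exceeds $n$, and this is the single place where the compact support of $\mathcal M$ and the decay assumption $\modulo{b(a,y)}\le(1+\norma{y})^{-(n+1)}$ are genuinely needed. With~\ref{ip:(ub)} established, \Cref{thm:main} applies, which proves the first assertion.

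Finally, assume in addition $u_o\ge0$, $A(0,\cdot)\ge0$, $\mathcal M\ge0$ and $b\ge0$. Then~\ref{hyp:g-positivity} holds since $q\equiv0\ge0$; \ref{ip:(ub-2)} holds since $u_b$ integrates the non-negative quantities $\mathcal M$, $b$ and $w$ against the non-negative factor $1/(A(0,y)\,\eps^n)$; and~\ref{ip:(u0plus)} is $u_o\ge0$. Hence \Cref{cor:piu} yields $M_\eps(t)\ge0$ for all $t$. For global existence one applies \Cref{cor:assumpt-defin-result} with $I=\reali_+$: the only remaining requirement is~\eqref{eq:36} which, evaluated along the now non-negative solution, reads $p(t,x,w)\,u+q(t,x,u,w)=\bigl(-\tfrac1\eps\int_{\mathcal X}w-d(x)/\eps\bigr)u\le\tfrac{\norma{d}_{\LL\infty}}{\eps}\,u$, because $\int_{\mathcal X}w\ge0$ and $u\ge0$; thus~\eqref{eq:36} holds with $C_1=0$ and the constant $C_2=\norma{d}_{\LL\infty}/\eps\in\Lloc\infty(\reali_+;\reali)$, so the solution is defined on all of $\reali_+$. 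Apart from the $\LL1$ bound on the boundary weight $B$, every step is routine bookkeeping.
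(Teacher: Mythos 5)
Your proposal is correct and follows essentially the same route as the paper's proof: direct verification of \ref{ip:(v)}, \ref{hyp:g_a} (with the same $P_1=\norma{d}_{\LL\infty}/\eps$, $P_2=1/\eps$), \ref{hyp:g_b}, \ref{ip:(u0)}, and then the key \ref{ip:(ub)} estimate exploiting the compact support of $\mathcal{M}$, $\inf A>0$ and the decay of $b$ to produce an integrable weight $B$, followed by the sign checks for Corollary~\ref{cor:piu} and the bound~\eqref{eq:36} for Corollary~\ref{cor:assumpt-defin-result}. Your version is in fact slightly more careful than the paper's (the effective support radius $r\eps$ rather than $r$, and the explicit choice $C_1=0$, $C_2=\norma{d}_{\LL\infty}/\eps$ along non-negative solutions).
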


\noindent The proof is deferred to Section~\ref{sec:AP}.  Thus, the
above result ensures existence on $\mathopen[0, +\infty \mathclose[$
as soon as all the assumptions are available therein, recovering the
well posedness results in~\cite{MischlerPerthameRyzhik_2002,
  NordmannPerthameTaing2017}.

\subsection{Further Applications}
\label{sec:furth-appl}

We briefly recall here further models considered in the literature
that fit within~\eqref{eq:1}. In each of the cases below, we refer to
the original sources for detailed descriptions of the modeling
environments.

\smallskip

The model presented in~\cite[Formula~(5)]{LorenziEtAl}, devoted to the
modeling of leukemia development, reads (here, $i = 2, \ldots, M-1$
for a fixed $M \in \naturali$, $M \geq 3$):
\begin{equation}
  \label{eq:9}
  \!\!\!\!\!\!
  \left\{
    \begin{array}{@{}l@{}}
      \partial_t n_1
      =
      \left(\dfrac{2\, a_1 (x)}{1 + K \int_0^1 n_M (t,x') \dd{x'}}-1\right)
      p_1 (x) \, n_1
      \\
      \partial_t n_i
      =
      2 \left(
      1
      {-}
      \dfrac{a_{i-1} (x)}{1 {+} K \! \int_0^1 n_M (t,x') \dd{x'}}
      \right)
      \!  p_{i-1} (x)  \, n_{i-1}
      {+}
      \left(
      \dfrac{2 a_i (x)}{1 {+} K \! \int_0^1 n_M (t,x') \dd{x'}}
      {-}
      1
      \right)
      \!  p_i (x)  \, n_i
      \\
      \partial_t n_M
      =
      2 \left(1-\dfrac{a_{M-1} (x)}{1 + K \int_0^1 n_M (t,x') \dd{x'}}\right)
      p_{M-1} (x) \, n_{M-1} - d\, n_M
      \\
      n_i (0,x)
      =
      n_i^o (x)\,.
    \end{array}
  \right.
  \!\!\!
\end{equation}
Remark that~\eqref{eq:9} can be seen as a system of ordinary
differential equations on functions defined on $[0,1]$ or,
alternatively, as a system of ordinary differential equations coupled
also through a non local dependence on the $x$ variable. Nevertheless,
it fits within~\eqref{eq:1}: 
indeed, set $k = M$, $m=0$, $n=1$, $\mathcal{X} = \reali$,
$u = (n_1, \ldots, n_M)$, $v \equiv 0$, the other terms being
obviously chosen.

It is worth noting that the recent model~\cite[Formula~(13)]
{MR4263205}, though devoted to an entirely different scenario, is
analytically analogous to~\eqref{eq:9} and also fits within the
framework formalized in Section~\ref{sec:assumptions-results}.  The
use of Theorem~\ref{thm:main} and Theorem~\ref{thm:stab} thus extends
the results in~\cite{MR4263205, LorenziEtAl} comprehending $\LL1$
solutions and providing a full set of stability estimates.

\medskip

Another example is the model recently presented
in~\cite[Formula~(1.1)]{KangRuan2021}, devoted to an age--structured
population described by the time, age and space dependent density
$u = u (t,a,y)$:
\begin{equation}
  \label{eq:38}
  \left\{
    \begin{array}{l}
      \partial_t u + \partial_a u = d (J*u (t) - u) + G\left(u (t)\right)
      \\
      u (t,0,y) =  F\left(u (t)\right)
      \\
      u (0,a,y) = \Phi (a,y)
    \end{array}
  \right.
\end{equation}
considered in~\cite{KangRuan2021} for $a \in [0, a^+]$ and
$y \in \Omega$, where $a^+ \in \, \left]0, +\infty\right[$ and
$\Omega \subseteq \reali^N$ are given. Above, $J$ is a convolution
kernel, while the functionals $F$ and $G$ are locally Lipschitz
continuous with respect to the $\LL1$ norm. Model~\eqref{eq:38} fits
into~\eqref{eq:1} setting $k=1$, $m=1$, $n=N$,
$\mathcal{X} = \reali_+ \times \reali^N$, $x = (a,y)$,
$\setlength{\delimitershortfall}{2pt}v = \left[
  \begin{array}{@{}c@{}}
    1\\0
  \end{array}
\right]$, the choice of the other terms being immediate. The results
in Section~\ref{sec:assumptions-results} immediately apply even if the
age interval $[0,a^+]$ and the space domain are bounded, thanks to the
generality of the assumptions required on $v$. This allows to have
qualitative information on the dependence of the solutions exhibited
in~\cite{KangRuan2021} on the various parameters and functions
defining~\eqref{eq:38}.

\medskip

We recall also the following competitive population model with age
structure as an example of a system of equations. It was introduced
and studied from the optimal management point of view
in~\cite[Formula~(1.1)]{FisterEtAl2004}:
\begin{equation}
  \label{eq:39}
  \left\{
    \begin{array}{l}
      \displaystyle
      \partial_t u^1 + \partial_a u^1
      =
      - \mu_1 (a, u^1) \, u^1 - f^1 (t,a)\, u^1
      - u^1 \int_0^A c_1 (a',a)\, u^2 (t,a') \dd{a'}
      \\
      \displaystyle
      \partial_t u^2 + \partial_a u^2
      =
      - \mu_2 (a,u^2)\, u^2 - f^2 (t,a)\, u^2
      - u^2 \int_0^A c_2 (a',a)\, u^1 (t,a') \dd{a'}
      \\
      \displaystyle
      u^1 (t,0) = \int_0^A \beta_1 (a') \, u^1 (t,a') \dd{a'}
      \\
      \displaystyle
      u^2 (t,0) = \int_0^A \beta_2 (a') \, u^2 (t,a') \dd{a'}
      \\
      u^1 (0,a) = u^1_o (a)
      \\
      u^2 (0,a) = u^2_o (a) \,.
    \end{array}
  \right.
\end{equation}
Here, we have $k=2$, $m=1$, $n=0$, $\mathcal{X}= \reali_+$,
$v=1$. Under the assumptions of Theorem~\ref{thm:main} and
Theorem~\ref{thm:stab} we recover the continuity of the profit
functional~\cite[Formula~(1.2)]{FisterEtAl2004}
\begin{displaymath}
  J (f) =
  \int_0^T \int_0^A \left(
    K_1 (a) \, f^1 (t,a)\, u^1 (t,a)
    +
    K_2 (a) \, f^2 (t,a)\, u^2 (t,a)
  \right) \dd{a} \dd{t} \,,
\end{displaymath}
now also in the setting of $\LL1$ solutions.

\section{Analytic Proofs}
\label{sec:AP}

\subsection{The Scalar Case}

We now consider in detail the affine scalar case, namely~\eqref{eq:14}
with $f (t, x, u) = v (t, x) \; u$ and
$g (t,x,u) = p (t,x) \, u + q (t,x)$, i.e.,
\begin{equation}
  \label{eq:2}
  \left\{
    \begin{array}{l@{\qquad}r@{\,}c@{\,}l@{}}
      \partial_t u
      +
      \diver_x \left(v (t,x) u\right)
      =
      p (t,x) \, u + q (t,x)
      & (t,x)
      & \in
      & \reali_+ \times \mathcal{X}
      \\
      u (t,\xi) = u_b (t,\xi)
      & (t,\xi)
      & \in
      & \reali_+ \times \partial\mathcal{X}
      \\
      u (0,x) = u_o (x)
      & x
      & \in
      & \mathcal{X} \,.
    \end{array}
  \right.
\end{equation}

\noindent Recall the following standard notation. A
\emph{characteristic} of~\eqref{eq:2} is the solution
$t \to X (t; t_o,x_o)$ to the following Cauchy Problem for the system
of ordinary differential equations
\begin{equation}
  \label{eq:3}
  \left\{
    \begin{array}{l}
      \dot x = v (t,x)
      \\
      x (t_o) = x_o \,.
    \end{array}
  \right.
  \qquad
  \begin{array}{r@{\,}c@{\,}l}
    (t, x)
    & \in
    & I \times \mathcal{X}
    \\
    (t_o, x_o)
    & \in
    & I \times \mathcal{X} \,.
  \end{array}
\end{equation}
For $\tau, t \in I$ and for $x \in \mathcal{X}$, define
\begin{equation}
  \label{eq:10}
  \mathcal{E} (\tau,t,x)
  =
  \exp\left(
    \int_\tau^t
    \left(
      p\left(s, X (s;t,x)\right)
      -
      \diver_x v \left(s,X (s;t,x)\right)
    \right)
    \dd{s}
  \right)
\end{equation}
and for all $(t,x) \in I \times \mathcal{X}$, if
$x \in X(t; [0,t[, \partial\mathcal{X})$, we set
\begin{equation}
  \label{eq:11}
  T (t,x)
  =
  \inf \left\{
    s \in [0,t[
    \colon
    X (s; t,x) \in \mathcal{X}
  \right\} \,.
\end{equation}
With the notation introduced above, we recall the well known formula
\begin{equation}
  \label{eq:8}
  \!\!\!\!\!
  u (t,x)=
  \left\{
    \begin{array}{l@{\qquad}r@{\,}c@{\,}l@{}}
      u_o \left(X (0;t,x)\right) \mathcal{E} (0,t,x)
      \\
      \displaystyle
      \qquad\qquad
      +
      \int_0^t q\left(\tau, X (\tau;t,x)\right)\, \mathcal{E} (\tau,t,x)\dd\tau
      & x
      & \in
      & X (t; 0,\mathcal{X})
      \\[6pt]
      u_b\left(T (t,x), X\left(T (t,x); t, x\right)\right) \,
      \mathcal{E}\left(T (t,x), t, x\right)
      \\
      \displaystyle
      \qquad\qquad
      + \int_{T (t,x)}^t q\left(\tau, X (\tau; t,x)\right) \, \mathcal{E} (\tau,t,x) \dd\tau
      & x
      & \in
      & X (t; \mathopen[0,t\mathclose[, \partial \mathcal{X})
    \end{array}
  \right.
  \!\!\!
\end{equation}
obtained from the integration along characteristics, a standard tool
at least since the classical paper~\cite{MR354068}. The following
relations are of use below, for a proof see for
instance~\cite[Chapter~3]{BressanPiccoliBook},
\begin{align}
  \label{eq:ptX}
  \partial_t X (t; t_o, x_o)
  & =
    v\left(t, X (t; t_o, x_o)\right)
  \\
  \label{eq:ptoX}
  \partial_{t_o} X (t; t_o, x_o)
  & =
    -v (t_o, x_o) \;
    \exp \int_{t_o}^t \diver_x v\left(s; X(t, t_o, x_o)\right) \dd{s}
  \\
  \label{eq:21}
  D_{x_o} X (t; t_o, x_o)
  & =
    M (t)
    \mbox{, the matrix } M \mbox{ solves }
    \left\{
    \begin{array}{l}
      \dot M = D_xv\left(t, X (t; t_o, x_o)\right) M
      \\
      M (t_o) = \Id \,.
    \end{array}
  \right.
\end{align}

In order to prove that~\eqref{eq:8} solves~\eqref{eq:2} in the sense
of Definition~\ref{def:mvsol} and to provide the basic well posedness
estimates, a few technical lemmas are in order. First introduce the
following notation: where misunderstandings might arise, we use the
positional notation for derivatives. For instance, with reference to
the map $(t; t_o, x_o) \to X (t; t_o, x_o)$, we denote
\begin{displaymath}
  \partial_2 X (t; t_o, x_o)
  =
  \partial_{t_o} X (t; t_o, x_o)
  =
  \lim_{\tau \to 0} \dfrac{X (t;t_o+\tau,x_o) - X (t; t_o, x_o)}{\tau} \,.
\end{displaymath}
We also set $X = (X_1, \ldots, X_{m+n})$, with $X_i = X \cdot e_i$,
where $(e_1, \ldots, e_{m+n})$ is the canonical base of
$\reali^{m+n}$. Recall also that
$\partial_l X_i = \partial_l (X \cdot e_i) = (\partial_l X) \cdot
e_i$, for $l = 1,2,3$ and $i = 1, \ldots, m+n$.

\begin{lemma}
  \label{lem:t-new}
  Under assumption~\ref{ip:(v)} with $k=1$, the map in~\eqref{eq:11}
  \begin{equation}
    \label{eq:16-new}
    \begin{array}{@{}c@{\,}c@{\,}ccc@{}}
      T
      & \colon
      & \left\{ (t,x) \in \reali_+ \times \mathcal{X}
        \colon
        x \in X (t; \left[0, t\right[,\partial\mathcal{X})\right\}
      &\to
      & \reali_+
      \\
      &
      & (t,x)
      & \mapsto
      & \inf \left\{
        s \in [0,t[
        \colon
        X (s; t,x) \in \mathcal{X}
        \right\}
    \end{array}
  \end{equation}
  is well defined. Moreover, for all $t \in \reali_+$ and
  a.e.~$x \in \mathcal{X}$ such that
  $x \in X (t;[0,t[,\partial \mathcal{X})$, there exists a unique
  $i \in \{1, \ldots, m\}$, depending on $t$ and $x$, such that
  \begin{equation}
    \label{eq:15-new}
    X_i (T (t,x); t, x) = 0.
  \end{equation}
  Given $t \in \reali_+$, for $i \in \{1, \ldots, m\}$, call
  ${\mathbb X}_i^t$ the set of $x \in \mathcal{X}$ such that $i$ is
  the \emph{unique} index satisfying~\eqref{eq:15-new}. Then, the map
  \begin{equation}
    \label{eq:20-new}
    \begin{array}{cccc}
      M_i \colon
      & {\mathbb X}_i^t
      & \to
      & \reali_+ \times  \reali^{n+m-1}
      \\
      & x
      & \mapsto
      & \left(T (t,x),
        \left(X_j (T (t,x), t, x)\right)_{j \ne i}
        \right)
    \end{array}
  \end{equation}
  is a local diffeomorphism. The derivatives of the function $T$ are
  given by
  \begin{align}
    \label{eq:18-new}
    \partial_t T (t,x)
    & =
      - \dfrac{\partial_{2} X_i (T (t,x);t,x)}%
      {v_i \left(T (t,x),X (T (t,x);t,x)\right)}
    \\
    \label{eq:19-new}
    \partial_{x_\ell} T (t,x)
    & =
      -\dfrac{\partial_{3_\ell} X_i \left(T (t,x); t, x\right)}%
      {v_i\left(T (t,x), X\left(T (t,x); t, x\right)\right)}
      \qquad \ell = 1, \ldots, n+m \,.
  \end{align}
  Finally the absolute value of the determinant of the Jacobian matrix
  $D M_i$ at $x$ is
  \begin{equation}
    \label{eq:det-jabobian}
    \frac{1}{v_i\left(T (t,x), X (T (t,x); t,x\right)}
    \exp \int_{t}^{T(t, x)} \sum_{j = 1}^{m+n} \partial_{x_j}
    v_j\left(s, X\left(s; t, x\right)
    \right) \dd s.
  \end{equation}
\end{lemma}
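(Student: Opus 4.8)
The plan is to reduce everything to the smoothness of the characteristic flow: since $v\in\C1$, standard ODE theory (as in \cite{BressanPiccoliBook}) makes $(s,t_o,x_o)\mapsto X(s;t_o,x_o)$ of class $\C1$ in all three arguments, and $v\in\LL\infty$ guarantees that each characteristic is defined on all of $I$; I will also use $\partial_1 X(s;t_o,x_o)=v\bigl(s,X(s;t_o,x_o)\bigr)$, which is~\eqref{eq:ptX}. I would then split the proof into four steps, one per claim. For the \emph{well-definedness of $T$}, fix $(t,x)$ with $x\in X(t;[0,t[,\partial\mathcal{X})$; then $s\mapsto X(s;t,x)$ is continuous and meets $\partial\mathcal{X}$ at some $s_b\in[0,t[$, so the set defining $T(t,x)$ in~\eqref{eq:16-new} is non-empty and bounded below and the infimum exists. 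To see it is a minimum and that $X\bigl(T(t,x);t,x\bigr)\in\partial\mathcal{X}$, I invoke the transversality in~\ref{ip:(v)}: at any instant when the characteristic touches $\partial\mathcal{X}$ with its $i$-th coordinate equal to $0$, the scalar $X_i$ has derivative $v_i>V>0$ there, hence is strictly increasing across that instant, so the characteristic crosses $\partial\mathcal{X}$ transversally, can neither graze it nor immediately re-enter $\mathcal{X}$, and near the infimum the set of times at which it lies in $\mathcal{X}$ is a half-open interval whose endpoint lies on $\partial\mathcal{X}$.

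Next, for the \emph{a.e.\ uniqueness of the index $i$}: transversality already forces at least one of the first $m$ coordinates of $X\bigl(T(t,x);t,x\bigr)$ to vanish, and I must rule out, for a.e.\ $x$, that two or more do. Let $E\subset\partial\mathcal{X}$ be the set where at least two of the first $m$ coordinates vanish — a finite union of affine subspaces of dimension $n+m-2$ — and consider $\Psi\colon[0,t]\times E\to\mathcal{X}$, $\Psi(s,z)=X(t;s,z)$. Its domain has dimension $n+m-1$ and $\Psi$ is $\C1$, so $\Psi\bigl([0,t]\times E\bigr)$ is Lebesgue-negligible in $\mathcal{X}$; but every $x$ whose backward characteristic first meets $\partial\mathcal{X}$ inside $E$ equals $X\bigl(t;T(t,x),X(T(t,x);t,x)\bigr)\in\Psi\bigl([0,t]\times E\bigr)$. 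Hence for a.e.\ such $x$ there is a unique $i$ with~\eqref{eq:15-new}, the sets $\mathbb{X}_i^t$ are well defined, and a continuity–compactness argument using transversality (near a point of $\mathbb{X}_i^t$ the characteristic still crosses the face $\{x_i=0\}$ exactly once and transversally, while the other first $m$ coordinates stay positive) shows each $\mathbb{X}_i^t$ is open.

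On $\mathbb{X}_i^t$, the regularity of $T$ and formulas~\eqref{eq:18-new}--\eqref{eq:19-new} follow from the implicit function theorem applied to $\Phi(s,t,x):=X_i(s;t,x)$, which is $\C1$ with $\partial_s\Phi\bigl(T(t,x),t,x\bigr)=v_i\bigl(T(t,x),X(T(t,x);t,x)\bigr)>V>0$: differentiating the identity $X_i\bigl(T(t,x);t,x\bigr)=0$ with respect to $t$, respectively $x_\ell$, yields~\eqref{eq:18-new}, respectively~\eqref{eq:19-new}. For the Jacobian of $M_i$ I set $R:=\partial_3 X\bigl(T(t,x);t,x\bigr)=D_{x_o}X\bigl(T(t,x);t,x\bigr)$, which by~\eqref{eq:21} solves the variational equation with value $\Id$ at time $t$; Liouville's formula then gives $\det R=\exp\int_t^{T(t,x)}\sum_{j=1}^{m+n}\partial_{x_j}v_j\bigl(s,X(s;t,x)\bigr)\dd s$, a positive number. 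Chain-rule differentiation of the components of $M_i$, using $\partial_1 X_j\bigl(T(t,x);t,x\bigr)=v_j$ together with~\eqref{eq:19-new}, shows that $D M_i=L\,R$, where the row of $L$ corresponding to the $T$-component equals $-\tfrac{1}{v_i}$ times the $i$-th standard basis row vector and the row corresponding to the $X_j$-component ($j\neq i$) equals the $j$-th standard basis row vector minus $\tfrac{v_j}{v_i}$ times the $i$-th one, all the $v$'s evaluated at $\bigl(T(t,x),X(T(t,x);t,x)\bigr)$. Every column of $L$ other than the $i$-th is a distinct standard basis column vector, so cofactor expansion along the $T$-row gives $\det L=\pm\tfrac{1}{v_i}$; hence $\det DM_i\neq0$, so $M_i$ is a local diffeomorphism by the inverse function theorem, and $\modulo{\det DM_i}=\tfrac{1}{v_i}\det R$, which is~\eqref{eq:det-jabobian}.

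The main obstacle, I expect, is not the calculus of the third step but the geometric bookkeeping of the first two: turning the transversality hypothesis~\ref{ip:(v)} into the precise statements that the infimum defining $T$ is attained, that the characteristic genuinely crosses $\partial\mathcal{X}$ at $T(t,x)$ (no tangency and no immediate re-entry), and that for a.e.\ $x$ this crossing takes place at a single face — the last point resting on the elementary but essential dimension count for $\Psi\bigl([0,t]\times E\bigr)$. Once this is in place, the remaining work is just the implicit and inverse function theorems, Liouville's formula, and the one-line linear-algebra identity $\det L=\pm 1/v_i$.
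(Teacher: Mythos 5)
Your proposal is correct and follows essentially the same route as the paper: implicit differentiation of the constraint $X_i\bigl(T(t,x);t,x\bigr)=0$ (with transversality $v_i>V>0$) for~\eqref{eq:18-new}--\eqref{eq:19-new}, and a factorization/row-reduction of $D M_i$ to $\frac{1}{v_i}\modulo{\det D_{x_o}X}$ combined with~\eqref{eq:21} and Liouville's theorem for~\eqref{eq:det-jabobian}. Your explicit dimension-count showing that the points whose backward characteristics first hit an edge (two vanishing coordinates) form a null set merely spells out what the paper disposes of in one line via the flow being a diffeomorphism, so it is a welcome elaboration rather than a different argument.
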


\begin{proof}
  By~\ref{ip:(v)}, the usual Cauchy Theorem for systems of ordinary
  differential equations ensures that, for all
  $(t_o, x_o) \in \reali_+ \times \mathcal{X}$, the Cauchy
  Problem~\eqref{eq:3} admits a unique solution defined on a maximal
  interval $[T_{(t_o,x_o)}, +\infty [$, with
  $T_{(t_o,x_o)} \in [0,t_o]$. Then, the map $T$ defined
  in~\eqref{eq:11} can be written $T (t,x) = T_{(t,x)}$ whenever
  $T_{(t,x)} >0$ and $T(t, x) = 0$ otherwise.  Hence, the
  map~\eqref{eq:16-new} is well defined.

  Once $x \in X (t; \mathopen[0,t\mathclose[, \partial\mathcal{X})$,
  it is clear that there exists at least one index $i$ such
  that~\eqref{eq:15-new} holds. The uniqueness follows, since
  $X (t;\cdot, \cdot)$ is a diffeomorphism.

  Fix $t >0$, $i \in \{1, \ldots, m\}$, and $x \in {\mathbb X}_i^t$.
  Locally around $(t, x)$, the constraint~\eqref{eq:15-new} remains
  valid.  To compute the derivatives of the map $(t,x) \to T (t,x)$,
  differentiating~\eqref{eq:15-new} with respect to~$t$ yields
  \begin{equation*}
    \partial_1 X_i\left(T (t,x); t, x\right) \, \partial_t T (t,x)
    + \partial_{2} X_i\left(T (t,x);t,x\right)
    =
    0
  \end{equation*}
  and so, using~\eqref{eq:ptX},
  \begin{equation*}
    v_i \left(T (t,x), X\left(T (t,x); t, x\right)\right)
    \partial_t T (t,x)
    + \partial_{2} X_i\left(T (t,x);t,x\right)
    =
    0
  \end{equation*}
  which proves~\eqref{eq:18-new}, while a differentiation with respect
  to~$x_\ell$ ($\ell \in \left\{1, \ldots, m+n\right\}$) yields
  \begin{equation*}
    \partial_1 X_i\left(T (t,x);t,x\right) \; \partial_{x_\ell} T (t,x)
    +
    \partial_{3_\ell} X_i\left(T (t,x); t,x\right) = 0
  \end{equation*}
  and so, using~\eqref{eq:ptX},
  \begin{equation*}
    v_i \left(T (t,x), X\left(T(t, x); t, x\right)\right)
    \; \partial_{x_\ell} T (t,x)
    +
    \partial_{3_\ell} X_i\left(T (t,x); t,x\right) = 0,
  \end{equation*}
  which proves~\eqref{eq:19-new}.

  Consider the $\left(n+m\right) \times \left(n+m\right)$ Jacobian
  matrix $D M_i$.  By~\eqref{eq:19-new}, the first row is
  \begin{equation*}
    \left(\partial_{x_1} T(t, x), \cdots,
      \partial_{x_{n+m}} T(t, x)\right)
    =
    \left(- \frac{\partial_{3_1} X_i}{v_i}, \cdots,
      - \frac{\partial_{3_{n+m}} X_i}{v_i}\right),
  \end{equation*}
  where, for simplicity, we omitted the arguments of the functions
  $X_i$ and $v_i$.  The remaining rows, indexed by
  $j \in \left\{1, \ldots, n+m\right\}$, $j \ne i$, of $D M_i$ are
  given by
  \begin{eqnarray*}
    &
    &
      \left(\partial_{x_1} X_j(T(t, x); t, x), \cdots,
      \partial_{x_{n+m}} X_j(T(t, x); t, x)\right)
    \\
    & =
    & \left(-v_j \frac{\partial_{3_1} X_i}{v_i} + \partial_{3_1} X_j,
      \cdots, -v_j \frac{\partial_{3_{n+m}} X_i}{v_i} + \partial_{3_{n+m}} X_j
      \right).
  \end{eqnarray*}
  We compute the determinant of $DM_i$ using Gauss method.  We modify
  all the rows, except the first one, by adding to each row a multiple
  of the first one. In this way the determinant of $DM_i$ equals the
  determinant of the matrix
  \begin{equation*}
    \left(
      \begin{array}{cccc}
        - \frac{\partial_{3_1} X_i}{v_i}
        &
          - \frac{\partial_{3_2} X_i}{v_i}
        & \cdots
        & - \frac{\partial_{3_{n+m}} X_i}{v_i}
          \vspace{.2cm}\\
        \partial_{3_1} X_1
        & \partial_{3_2} X_1
        & \cdots
        & \partial_{3_{n+m}} X_1
          \vspace{.2cm}\\
        \vdots
        & \vdots
        & \vdots
        & \vdots
          \vspace{.2cm}\\
        \partial_{3_1} X_{n+m}
        & \partial_{3_2} X_{n+m}
        & \cdots
        & \partial_{3_{n+m}} X_{n+m}
      \end{array}
    \right)
  \end{equation*}
  in the case $i \ne 1, n+m$, the other cases being entirely similar.
  Therefore
  $\modulo{\det \left(DM_i\right)} = \frac{1}{v_i} \, \modulo{\det
    \left(D_3 X\right)}$.  Using~\eqref{eq:21} and Liouville
  Theorem~\cite[Theorem~1.2, Chapter IV]{MR1929104}, we deduce
  \begin{displaymath}
    \begin{split}
      \modulo{\det \left(DM_i(x)\right)} & = \frac{1}{v_i\left(T
          (t,x); X(T (t,x); t,x)\right)} \exp \int_{t}^{T(t, x)} \tr
      \left(D_x v\left(s, X\left(s; t, x\right) \right) \right)\dd s
      \\
      & = \frac{1}{v_i\left(T (t,x); X(T (t,x); t,x)\right)} \exp
      \int_{t}^{T(t, x)} \sum_{j = 1}^{m+n} \partial_{x_j} v_j\left(s,
        X\left(s; t, x\right) \right) \dd s
    \end{split}
  \end{displaymath}
  which proves~\eqref{eq:det-jabobian}.
\end{proof}

The next two lemmas provide the basic \emph{a priori} and stability
estimates on~\eqref{eq:2}.

\begin{lemma}
  \label{lem:L1inf}
  Let~\ref{ip:(v)} with $k=1$ hold, let
  $p \in \LL\infty(I \times \mathcal{X}; \reali)$,
  $q \in \LL1(I \times \mathcal{X}; \reali)$,
  $u_b \in \LL1 (I \times \partial \mathcal{X}; \reali)$ and
  $u_o \in \LL1 (\mathcal{X};\reali)$. Then, for every $t \in I$ the
  solution to problem~\eqref{eq:2} defined through
  formula~\eqref{eq:8} satisfies the following \emph{a priori}
  estimates:
  \begin{equation}
    \label{eq:17}
    \begin{array}{rcl}
      \norma{u(t)}_{\LL1 (\mathcal{X};\reali)}
      & \leq
      & \displaystyle
        \left(
        \norma{q}_{\LL1([0, t] \times \mathcal{X}; \reali)}
        + \norma{u_o}_{\LL1(\mathcal{X})}
        \right)
        e^{\norma{p}_{\LL\infty ([0,t]\times\mathcal{X};\reali)}t}
      \\
      &
      & \displaystyle
        + \left(
        \sum_{i=1}^{m}
        \iint_{\Gamma_i}
        \modulo{u_b(\tau, \xi)} \,
        v_i (\tau, \xi) \dd \tau \dd \xi
        \right)
        e^{\norma{p}_{\LL\infty ([0,t]\times\mathcal{X};\reali)}t} ,
    \end{array}
  \end{equation}
  where $\Gamma_i = M_i (\mathbb X_i^t)$ with $M_i$ as
  in~\eqref{eq:20-new} and $\mathbb X^i_t$ is as in
  Lemma~\ref{lem:t-new}.  If moreover
  $q \in \LL1\left(I; \LL\infty\left(\mathcal{X};
      \reali\right)\right)$,
  $u_o \in \LL\infty\left(\mathcal X; \reali\right)$, and
  $u_b \in \LL\infty (I \times \partial \mathcal{X}; \reali)$, then
  \begin{equation}
    \label{eq:5}
    \begin{array}{rcl}
      \norma{u(t)}_{\LL\infty (\mathcal{X}; \reali)}
      & \leq
      & \displaystyle
        \left(
        \norma{u_o}_{\LL\infty (\mathcal{X};\reali)}
        +
        \norma{u_b}_{\LL\infty ([0,t] \times \partial \mathcal{X};\reali))}
        +
        \norma{q}_{\LL1 ([0,t]; \LL\infty (\mathcal{X};\reali))}
        \right)
      \\
      &
      & \displaystyle
        \times \exp\left(
        \int_0^t \left(
        \norma{p (\tau)}_{\LL\infty (\mathcal{X};\reali)}
        +
        \norma{\diver_x v (\tau)}_{\LL\infty (\mathcal{X};\reali)}
        \right)\dd\tau
        \right) \,.
    \end{array}
  \end{equation}
\end{lemma}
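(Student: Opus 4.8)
The plan is to estimate directly the function defined by the representation formula~\eqref{eq:8}, since by hypothesis that is precisely the object whose norms we must control. Up to a set of measure zero, $\mathcal{X}$ splits into the two pieces appearing in~\eqref{eq:8}: the set $X(t;0,\mathcal{X})$ of points reached at time $t$ by a characteristic issued from the interior at time $0$, and the set $X(t;\mathopen[0,t\mathclose[,\partial\mathcal{X})$ of points reached by a characteristic issued from $\partial\mathcal{X}$ at some earlier time (the two overlapping only on the null set of $x$ with $X(0;t,x)\in\partial\mathcal{X}$). I would treat the two pieces separately and, within each, the initial/boundary term and the source term separately.

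For the $\LL1$ estimate~\eqref{eq:17}, on $X(t;0,\mathcal{X})$ I would perform the change of variables $x\mapsto x_o=X(0;t,x)$; by Liouville's theorem its Jacobian equals $\exp\int_0^t\diver_x v(s,X(s;0,x_o))\dd{s}$, which cancels exactly the $-\diver_x v$ contribution hidden in $\mathcal{E}(0,t,x)$ defined in~\eqref{eq:10}, so that $\mathcal{E}(0,t,x)\,\dd{x}=\exp\bigl(\int_0^t p\bigr)\dd{x_o}\le e^{\norma{p}_{\LL\infty([0,t]\times\mathcal{X};\reali)}t}\,\dd{x_o}$, and the $u_o$--term contributes at most $\norma{u_o}_{\LL1(\mathcal{X})}\,e^{\norma{p}_{\LL\infty}t}$. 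The source terms from both pieces are handled together: by Fubini one fixes $\tau\in[0,t]$ and changes variables $x\mapsto X(\tau;t,x)$, whose Jacobian again annihilates the $\diver_x v$ part of $\mathcal{E}(\tau,t,x)$; since for fixed $\tau$ the interior and the boundary characteristics that still lie in $\mathcal{X}$ at time $\tau$ land in disjoint subsets of $\mathcal{X}$, the combined contribution is at most $\int_0^t\norma{q(\tau)}_{\LL1(\mathcal{X})}\,e^{\norma{p}_{\LL\infty}t}\dd{\tau}=\norma{q}_{\LL1([0,t]\times\mathcal{X};\reali)}\,e^{\norma{p}_{\LL\infty}t}$. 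Finally, for the boundary term $u_b\bigl(T(t,x),X(T(t,x);t,x)\bigr)\,\mathcal{E}(T(t,x),t,x)$ I would use the a.e.\ partition $\{{\mathbb X}_i^t\}_{i=1}^m$ and the local diffeomorphisms $M_i$ of Lemma~\ref{lem:t-new}: substituting $x\mapsto M_i(x)$ and invoking the Jacobian formula~\eqref{eq:det-jabobian}, the factor $1/v_i$ turns into $v_i(\tau,\xi)$, the $\diver_x v$ terms cancel once more, and one is left with $\sum_{i=1}^m\iint_{\Gamma_i}\modulo{u_b(\tau,\xi)}\,v_i(\tau,\xi)\dd{\tau}\dd{\xi}\;e^{\norma{p}_{\LL\infty}t}$. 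Summing the three contributions yields~\eqref{eq:17}.

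The $\LL\infty$ estimate~\eqref{eq:5} needs no change of variables. Reading $\mathcal{E}$ off~\eqref{eq:10}, for every $\tau\in[0,t]$ one has $p(s,\cdot)-\diver_x v(s,\cdot)\le\norma{p(s)}_{\LL\infty(\mathcal{X};\reali)}+\norma{\diver_x v(s)}_{\LL\infty(\mathcal{X};\reali)}=:\phi(s)\ge0$, whence $\mathcal{E}(\tau,t,x)\le\exp\int_\tau^t\phi(s)\dd{s}\le\exp\int_0^t\phi(s)\dd{s}$; in particular this bound also holds for $\tau=T(t,x)$ in the boundary branch of~\eqref{eq:8}. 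Substituting it into~\eqref{eq:8} and using $\modulo{u_o}\le\norma{u_o}_{\LL\infty(\mathcal{X};\reali)}$, $\modulo{u_b}\le\norma{u_b}_{\LL\infty([0,t]\times\partial\mathcal{X};\reali)}$ and $\int_{T(t,x)}^t\modulo{q(\tau,X(\tau;t,x))}\dd{\tau}\le\int_0^t\norma{q(\tau)}_{\LL\infty(\mathcal{X};\reali)}\dd{\tau}=\norma{q}_{\LL1([0,t];\LL\infty(\mathcal{X};\reali))}$ gives~\eqref{eq:5} at once.

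The routine parts are the interior piece and the whole $\LL\infty$ bound. I expect the main obstacle to be the bookkeeping on the boundary piece: verifying that $\{{\mathbb X}_i^t\}_{i=1}^m$ exhausts $X(t;\mathopen[0,t\mathclose[,\partial\mathcal{X})$ up to a null set (so that the index in~\eqref{eq:15-new} is a.e.\ unique, as Lemma~\ref{lem:t-new} guarantees), checking that in~\eqref{eq:det-jabobian} the quantity $v_i$ is evaluated precisely at the boundary point $X(T(t,x);t,x)\in\partial\mathcal{X}$ reached by the characteristic, and tracking the signs of the exponents so that the $\diver_x v$ contributions cancel rather than reinforce. A similarly careful but standard argument is needed to justify the disjointness of images used in the source-term estimate.
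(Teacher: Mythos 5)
Your proposal is correct and follows essentially the same route as the paper: the same splitting of $\mathcal{X}$ into $X(t;0,\mathcal{X})$ and $X(t;\mathopen[0,t\mathclose[,\partial\mathcal{X})$, the change of variables along characteristics with Liouville's theorem cancelling the $\diver_x v$ part of $\mathcal{E}$, the diffeomorphisms $M_i$ of Lemma~\ref{lem:t-new} with the Jacobian~\eqref{eq:det-jabobian} producing the factor $v_i(\tau,\xi)$ on $\Gamma_i$, and the direct pointwise bound on $\mathcal{E}$ for~\eqref{eq:5}. The only cosmetic difference is that you treat the two source contributions together via Fubini and an explicit disjointness-of-images argument, whereas the paper estimates them separately over the sets $\Xi^i_t$ and combines them implicitly at the end; both are fine.
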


\begin{proof}
  The proof of the $\LL\infty$ bound directly follows from
  \begin{displaymath}
    \mathcal{E} (\tau, t, x)
    \leq
    \exp \left(
      \norma{p}_{\LL1 ([\tau,t]; \LL\infty (\mathcal{X};\reali))}+
      \norma{\diver_x v}_{\LL1 ([\tau,t]; \LL\infty (\mathcal{X};\reali))}
    \right)\,,
  \end{displaymath}
  and~\eqref{eq:8}. In order to get the $\LL1$ bound, observe that
  $\norma{u (t)}_{\LL1 (\mathcal{X}; \reali)} = \norma{u (t)}_{\LL1 (X
    (t; 0, \mathcal{X}); \reali)} + \norma{u (t)}_{\LL1 (X (t; [0,t[,
    \partial \mathcal{X}); \reali)}$.  We thus consider two cases and
  apply a suitable change of variable.

  By~\eqref{eq:8}, for $t \in I$, we have that
  \begin{equation}
    \label{eq:L1-initial-case}
    \begin{split}
      \int_{X\left(t; 0, \mathcal{X}\right)} \modulo{u(t, x)} \dd x &
      \leq \int_{X\left(t; 0, \mathcal{X}\right)} \modulo{u_o
        \left(X(0; t, x)\right)} \, \mathcal E\left(0, t, x\right) \dd
      x
      \\
      & \quad + \int_{X\left(t; 0, \mathcal{X}\right)} \int_0^t
      \modulo{q\left(\tau, X\left(\tau; t, x\right)\right)} \,
      \mathcal E\left(\tau, t, x\right) \dd \tau \dd x.
    \end{split}
  \end{equation}
  Consider the first term in the right hand side
  of~\eqref{eq:L1-initial-case}.  Using Liouville
  Theorem~\cite[Theorem~1.2, Chapter IV]{MR1929104}, the change of
  variables $\xi = X(0; t, x)$ and the assumptions on $p$,
  \begin{align*}
    \int_{X\left(t; 0, \mathcal{X}\right)}
    \modulo{u_o \left(X(0; t, x)\right)}
    \mathcal E\left(0, t, x\right) \dd x
    & = \int_{\mathcal{X}} \modulo{u_o(\xi)} \exp \left(\int_0^t
      p \left(s, X\left(s; 0, \xi\right)\right) \dd s\right) \dd \xi
    \\
    & \leq
      \norma{u_o}_{\LL1(\mathcal{X})} \,
      e^{\norma{p}_{\LL\infty ([0,t]\times\mathcal{X};\reali)} t}.
  \end{align*}
  Consider the second term in the right hand side
  of~\eqref{eq:L1-initial-case}.  Using the change of variable
  $\xi = X\left(\tau; t, x\right)$,
  \begin{align*}
    & \int_{X\left(t; 0, \mathcal{X}\right)} \int_0^t
      \modulo{q\left(\tau, X\left(\tau; t, x\right)\right)}
      \mathcal E\left(\tau, t, x\right) \dd \tau \dd x
    \\
    =
    &\int_0^t \int_{X\left(\tau; 0, \mathcal{X}\right)}
      \modulo{q(\tau, \xi)}
      \exp\left(\int_\tau^t p \left(s, X(s; \tau, \xi)\right) \dd s
      \right) \dd \xi \dd \tau
    \\
    \leq
    &  \norma{q}_{\LL1(X\left([0, t]; 0, \mathcal{X}\right); \reali)}
      e^{\norma{p}_{\LL\infty ([0,t]\times\mathcal{X};\reali)} t} .
  \end{align*}
  Therefore, using~\eqref{eq:L1-initial-case}, for $t \in I$, we
  deduce
  \begin{equation}
    \label{eq:L1-initial-cond-final}
    \int_{X(t; 0, \mathcal{X})}
    \modulo{u(t, x)} \dd x
    \leq
    \left(\norma{u_o}_{\LL1(\mathcal{X})}
      + \norma{q}_{\LL1(X\left([0, t]; 0, \mathcal{X}\right); \reali)}
    \right)
    e^{\norma{p}_{\LL\infty ([0,t]\times\mathcal{X};\reali)} t}.
  \end{equation}

  To estimate now the term depending on the boundary conditions, for
  $t \in I$, use~\eqref{eq:8}:
  \begin{eqnarray}
    \nonumber
    \int_{X(t; [0, t[, \partial \mathcal{X})}
    \modulo{u(t,x)} \dd x
    & =
    & \sum_{i=1}^{m} \int_{\mathbb X_i^t}
      \modulo{u(t, x)} \dd x
    \\
    \nonumber
    & \leq
    & \sum_{i=1}^{m} \int_{\mathbb X_i^t} \modulo{u_b
      \left(T(t, x), X\left(T(t, x); t, x\right)\right)} \,
      \mathcal{E}
      \left(T(t, x), t, x\right) \dd x
    \\
    \label{eq:L1-boundary-case}
    &
    & + \sum_{i=1}^{m} \int_{\mathbb X_i^t} \int_{T(t,
      x)}^t \modulo{q\left(\tau, X\left(\tau; t, x\right)\right)} \,
      \mathcal E\left(\tau, t, x\right) \dd \tau \dd x.
  \end{eqnarray}
  For $i \in \{1, \ldots, m\}$, use the diffeomorphism $M_i$
  in~\eqref{eq:20-new} as change of variables, i.e.,~$\tau = T(t, x)$,
  $\xi = X\left(T(t, x); t, x\right)$ and we set
  $\Gamma_i = M_i (\mathbb X_i^t)$. Thus, we have
  \begin{align*}
    & \int_{\mathbb X_i^t}
      \modulo{u_b \left(T(t, x), X\left(T(t, x); t, x\right)\right)} \,
      \mathcal E\left(T(t, x), t, x\right) \dd x
    \\
    =
    & \iint_{\Gamma_i}
      \modulo{u_b(\tau, \xi)} \, \exp \left(\int_\tau^t
      p\left(s, X(s; \tau, \xi)\right) \dd s\right) \,
      v_i (\tau, \xi)  \, \dd \tau \, \dd \xi
    \\
    \le
    & e^{\norma{p}_{\LL\infty ([0,t]\times\mathcal{X}; \reali)} t}
      \iint_{\Gamma_i}
      \modulo{u_b(\tau, \xi)} \,
      v_i (\tau, \xi) \, \dd \tau \, \dd \xi.
  \end{align*}
  For $i \in \{1, \ldots, m\}$, using again the change of variables
  $\xi = X \left(\tau; t, x\right)$, define
  \begin{equation}
    \label{eq:27}
    \Xi^i_t
    =
    \left\{
      (\tau,\xi) \in \reali^{1+m+n} \colon
      \tau \in [t, T (t,x)]
      \,,\;
      x \in \mathbb X_t^i
      \,,\;
      \xi = X (\tau;t,x)
    \right\}
  \end{equation}
  and we have
  \begin{eqnarray*}
    &
    & \int_{\mathbb X^i_t} \int_{T (t,x)}^t
      \modulo{q \left(\tau, X\left(\tau; t, x\right)\right)} \,
      \mathcal{E}\left(\tau, t, x\right) \dd{\tau} \dd x
    \\
    & =
    & \iint_{\Xi^i_t}
      \modulo{q(\tau, \xi)}
      \exp \left(\int_\tau^t p \left(s, X(s; \tau, \xi)\right) \dd s\right)
      \dd \tau \dd \xi
    \\
    & \le
    & \norma{q}_{\LL1(\Xi^i_t; \reali)} \,
      e^{\norma{p}_{\LL\infty ([0,t]\times\mathcal{X};\reali)} t}.
  \end{eqnarray*}
  Therefore, using~\eqref{eq:L1-boundary-case}, for $t \in I$, we
  deduce
  \begin{displaymath}
    \int_{X(t; [0, t[, \partial\mathcal{X})}
    \modulo{u(t, x)} \dd x
    \leq
    e^{\norma{p}_{\LL\infty ([0,t]\times\mathcal{X};\reali)} t}
    \sum_{i=1}^{m} \left[ \iint_{\Gamma_i}
      \modulo{u_b(\tau, \xi)}
      v_i (\tau, \xi) \dd \tau \dd \xi
      + \norma{q}_{\LL1(\Xi^i_t; \reali)}\right] \,.
  \end{displaymath}
  This concludes the proof.
\end{proof}

\begin{lemma}
  \label{lem:stability-linear-system}
  Fix $v$ satisfying~\ref{ip:(v)} with $k=1$. Let
  $p_1,p_2 \in \LL\infty(I \times \mathcal{X}; \reali)$,
  $q_1, q_2 \in \LL1(I \times \mathcal{X}; \reali)$ with $u_{b,1}$ and
  $u_{b,2}$ as in \Cref{lem:L1inf} and let $u_{o,1}, u_{o, 2}$
  satisfy~\ref{ip:(u0)}. Define $u_1$ and $u_2$ respectively the
  solutions to
  \begin{equation*}
    \left\{
      \begin{array}{l}
        \partial_t u_1
        +
        \diver_x \left(v \, u_1\right)
        =
        p_1 \, u_1 + q_1
        \\
        u_1 (t,\xi) = u_{b,1} (t,\xi)
        \\
        u_1 (0,x) = u_{o,1} (x)
      \end{array}
    \right.
    \quad \mbox{ and } \quad
    \left\{
      \begin{array}{l}
        \partial_t u_2
        +
        \diver_x \left(v \, u_2\right)
        = p_2 \, u_2 + q_2
        \\
        u_2 (t,\xi) = u_{b,2} (t,\xi)
        \\
        u_2 (0,x) = u_{o,2} (x).
      \end{array}
    \right.
  \end{equation*}
  Then, for every $t \in I$, the following stability estimate holds
  \begin{eqnarray}
    \nonumber
    &
    & \norma{u_1(t) - u_2(t)}_{\LL1 (\mathcal{X};\reali)}
    \\
    \nonumber
    &\le
    & \mathcal{P} (t) \,
      \norma{u_{o,1} - u_{o,2}}_{\LL1 (\mathcal{X}; \reali)} \,
    \\
    \nonumber
    &
    & + \mathcal{P} (t) \,
      \norma{v}_{\LL\infty ([0,t]\times\mathcal{X}; \reali^{n+m})} \,
      \norma{u_{b,1}- u_{b,2}}_{\LL1 (I\times  \partial\mathcal{X};\reali)}
    \\
    \nonumber
    &
    & + \mathcal{P} (t) \,
      \norma{q_1-q_2}_{\LL1 ([0,t]\times\mathcal{X};\reali)}
    \\
    \nonumber
    &
    & + \mathcal{P} (t)
      \left(
      \norma{u_{o,1}}_{\LL1 (\mathcal{X}; \reali)}
      {+}
      \norma{v}_{\LL\infty ([0,t]\times\mathcal{X}; \reali^{n+m})}
      \norma{u_{b,2}}_{\LL1 ([0,t]\times\partial\mathcal{X};\reali)}
      \right)
          \norma{p_1{-}p_2}_{\LL1 ([0,t]; \LL\infty(\mathcal{X}; \reali))}
    \\
    &
    & +
      \mathcal{P} (t) \,
      \norma{q_2}_{\LL1 ([0,t] \times \mathcal{X}; \reali)} \,
      \norma{p_1-p_2}_{\LL1 ([0,t]; \LL\infty(\mathcal{X};\reali))} \,
      \,,
      \label{eq:stability-linear}
  \end{eqnarray}
  where
  $\mathcal{P} (t) = \exp\left(t\, \max\left\{ \norma{p_1}_{\LL\infty
        ([0,t]\times\mathcal{X};\reali)}, \norma{p_2}_{\LL\infty
        ([0,t]\times\mathcal{X};\reali)}\right\}\right)$.
\end{lemma}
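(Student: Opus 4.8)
The proof rests on the explicit representation formula~\eqref{eq:8}, together with the crucial observation that $u_1$ and $u_2$ share the same velocity field $v$: hence they share the same characteristics $X$ from~\eqref{eq:3}, the same partition of $\mathcal{X}$ into $X(t;0,\mathcal{X})$ and $X\bigl(t;[0,t[,\partial\mathcal{X}\bigr)$, the same exit time $T$ from~\eqref{eq:11}, and the same change--of--variables diffeomorphisms $M_i$ from~\eqref{eq:20-new}. The only objects entering the two instances of~\eqref{eq:8} that differ are the data $u_{o,j}$, $q_j$, $u_{b,j}$ and the exponential weight, which we denote $\mathcal{E}_j$, meaning the map~\eqref{eq:10} built with $p=p_j$. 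By \Cref{lem:L1inf}, all the quantities appearing below are in $\LL1$, so the manipulations are licit.

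The starting point is an elementary bound on the difference of the weights. Since $\mathcal{E}_1$ and $\mathcal{E}_2$ differ only through the $p$--contribution in~\eqref{eq:10} and $a\mapsto e^a$ is locally Lipschitz, for $0\le\tau\le t$ one gets
\[
  \modulo{\mathcal{E}_1(\tau,t,x) - \mathcal{E}_2(\tau,t,x)}
  \leq
  \mathcal{P}(t)\;
  e^{-\int_\tau^t \diver_x v\left(s,X(s;t,x)\right)\dd{s}}\;
  \norma{p_1-p_2}_{\LL1\left([0,t];\LL\infty(\mathcal{X};\reali)\right)}\,,
\]
and likewise $\mathcal{E}_j(\tau,t,x)\le \mathcal{P}(t)\,e^{-\int_\tau^t \diver_x v(s,X(s;t,x))\dd{s}}$. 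The surviving factor $e^{-\int_\tau^t \diver_x v}$ is exactly the one absorbed by the Jacobian of each change of variables used in the sequel --- by Liouville's Theorem for the two terms coming from~\eqref{eq:8} on $X(t;0,\mathcal{X})$, and by~\eqref{eq:det-jabobian} for the $M_i$ on the boundary region --- precisely as in the proof of \Cref{lem:L1inf}; this is why no term involving $\norma{\diver_x v}$ survives in~\eqref{eq:stability-linear}.

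Next, subtract the two instances of~\eqref{eq:8} and, on $X(t;0,\mathcal{X})$, use the splittings
\[
  u_{o,1}\mathcal{E}_1 - u_{o,2}\mathcal{E}_2
  = (u_{o,1}-u_{o,2})\,\mathcal{E}_2 + u_{o,1}\,(\mathcal{E}_1 - \mathcal{E}_2)\,,
  \qquad
  q_1\mathcal{E}_1 - q_2\mathcal{E}_2
  = (q_1-q_2)\,\mathcal{E}_1 + q_2\,(\mathcal{E}_1 - \mathcal{E}_2)\,,
\]
while on $X\bigl(t;[0,t[,\partial\mathcal{X}\bigr)$ use the second splitting above together with
\[
  u_{b,1}\mathcal{E}_1 - u_{b,2}\mathcal{E}_2
  = (u_{b,1}-u_{b,2})\,\mathcal{E}_1 + u_{b,2}\,(\mathcal{E}_1 - \mathcal{E}_2)\,,
\]
each term evaluated along the relevant characteristic. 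Integrating $\modulo{u_1(t,x)-u_2(t,x)}$ over the two regions and performing the same changes of variables as in the proof of \Cref{lem:L1inf} (i.e.\ $\xi=X(0;t,x)$ and $\xi=X(\tau;t,x)$ on the initial region, and $(\tau,\xi)=M_i(x)$ and $\xi=X(\tau;t,x)$ on the boundary region, the Jacobian of $M_i$ contributing the weight $v_i(\tau,\xi)$ through~\eqref{eq:det-jabobian}), the pieces on the initial region are bounded by $\mathcal{P}(t)\norma{u_{o,1}-u_{o,2}}_{\LL1(\mathcal{X};\reali)}$, by $\mathcal{P}(t)\norma{u_{o,1}}_{\LL1(\mathcal{X};\reali)}\norma{p_1-p_2}_{\LL1([0,t];\LL\infty(\mathcal{X};\reali))}$, and by the $\LL1$ norms of $q_1-q_2$ and of $q_2$ over the relevant sub--region of $[0,t]\times\mathcal{X}$, times $\mathcal{P}(t)$ and $\mathcal{P}(t)\norma{p_1-p_2}_{\LL1([0,t];\LL\infty)}$ respectively. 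On the boundary region, bounding $v_i\le\norma{v}_{\LL\infty([0,t]\times\mathcal{X};\reali^{n+m})}$ and summing over $i=1,\dots,m$ (the sets $\Gamma_i=M_i(\mathbb{X}_i^t)$ lie on the pairwise a.e.--disjoint faces of $\partial\mathcal{X}$) turns $\sum_i\iint_{\Gamma_i}\modulo{u_{b,1}-u_{b,2}}\,v_i\,\dd{\tau}\dd{\xi}$ into $\norma{v}_{\LL\infty}\norma{u_{b,1}-u_{b,2}}_{\LL1(I\times\partial\mathcal{X};\reali)}$, and analogously the term carrying $\mathcal{E}_1-\mathcal{E}_2$ into $\mathcal{P}(t)\norma{v}_{\LL\infty}\norma{u_{b,2}}_{\LL1([0,t]\times\partial\mathcal{X};\reali)}\norma{p_1-p_2}_{\LL1([0,t];\LL\infty)}$, while the $q$--integral over $[T(t,x),t]$ again produces the $\norma{q_1-q_2}$ and $\norma{q_2}\norma{p_1-p_2}$ contributions over the complementary sub--region of $[0,t]\times\mathcal{X}$. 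Since these two sub--regions reassemble $[0,t]\times\mathcal{X}$ up to a null set, adding the two regional estimates yields exactly~\eqref{eq:stability-linear}.

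The computations are routine once \Cref{lem:t-new} and the Liouville--based changes of variables of \Cref{lem:L1inf} are available; the only genuinely delicate point is the bookkeeping, namely choosing in each splitting which summand is multiplied by $\mathcal{E}_1$ and which by $\mathcal{E}_2$, so that the data norms surviving on the right--hand side of~\eqref{eq:stability-linear} are precisely $\norma{u_{o,1}}$, $\norma{u_{b,2}}$ and $\norma{q_2}$, together with verifying that the $\diver_x v$ factors in $\mathcal{E}_j$ are exactly cancelled by the Jacobians.
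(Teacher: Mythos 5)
Your proposal is correct and follows essentially the same route as the paper's proof: the same decomposition of $\norma{u_1(t)-u_2(t)}_{\LL1(\mathcal{X};\reali)}$ over $X(t;0,\mathcal{X})$ and $X(t;[0,t[,\partial\mathcal{X})$, the same splittings of differences of products via $\mathcal{E}_1-\mathcal{E}_2$, and the same Liouville/\Cref{lem:t-new} changes of variables absorbing the $\diver_x v$ factors and producing the weight $v_i$ on the boundary faces. The only (immaterial) deviation is your symmetric choice in the initial-datum splitting, $u_{o,1}(\mathcal{E}_1-\mathcal{E}_2)$ instead of the paper's $u_{o,2}(\mathcal{E}_1-\mathcal{E}_2)$, which in fact yields the coefficient $\norma{u_{o,1}}_{\LL1(\mathcal{X};\reali)}$ appearing in the statement of~\eqref{eq:stability-linear}.
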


\begin{proof}
  Consider $u_1$ and $u_2$ the solutions to the two systems and fix
  $t \in I$.  Define for $i=1, 2$
  \begin{equation*}
    \mathcal E_i\left(\tau, t, x\right)
    = \exp\left(
      \int_\tau^t
      \left(
        p_i\left(s, X (s;t,x)\right)
        -
        \diver_x v \left(s,X (s;t,x)\right)
      \right)
      \dd{s}
    \right).
  \end{equation*}
  We have the decomposition
  \begin{equation}
    \label{eq:stab-decomposition}
    \norma{u_1(t) - u_2(t)}_{\LL1 (\mathcal{X}; \reali)}
    =
    \int_{X\left(t; 0, \mathcal{X}\right)} \modulo{u_1(t) - u_2(t)} \dd x
    + \int_{X(t; [0, t[, \partial \mathcal{X})}
    \modulo{u_1(t) - u_2(t)} \dd x \,.
  \end{equation}
  We treat the two terms in the right hand side
  of~\eqref{eq:stab-decomposition} separately. The first one is dealt
  with the explicit formula~\eqref{eq:8}:
  \begin{align*}
    &
      \quad \int_{X\left(t; 0, \mathcal{X}\right)} \modulo{u_1(t) - u_2(t)} \dd x
    \\
    &
      \le \int_{X\left(t; 0, \mathcal{X}\right)}
      \modulo{u_{o,1}\left(X\left(0; t, x\right)\right)
      \mathcal E_1\left(0, t, x\right)
      - u_{o,2}\left(X\left(0; t, x\right)\right)
      \mathcal E_2\left(0, t, x\right)} \dd x
    \\
    & \quad + \int_{X\left(t; 0, \mathcal{X}\right)} \int_0^t
      \modulo{q_1\left(\tau, X\left(\tau; t, x\right)\right)
      \mathcal E_1\left(\tau, t, x\right)
      - q_2\left(\tau, X\left(\tau; t, x\right)\right)
      \mathcal E_2\left(\tau, t, x\right)} \dd \tau \dd x
    \\
    &
      \le \int_{X\left(t; 0, \mathcal{X}\right)}
      \mathcal E_1\left(0, t, x\right)
      \modulo{u_{o,1}\left(X\left(0; t, x\right)\right)
      - u_{o,2}\left(X\left(0; t, x\right)\right)
      } \dd x
    \\
    &
      \quad + \int_{X\left(t; 0, \mathcal{X}\right)}
      \modulo{u_{o,2}\left(X\left(0; t, x\right)\right)}
      \modulo{
      \mathcal E_1\left(0, t, x\right)
      - \mathcal E_2\left(0, t, x\right)} \dd x
    \\
    & \quad + \int_{X\left(t; 0, \mathcal{X}\right)} \int_0^t
      \mathcal E_1\left(\tau, t, x\right)
      \modulo{q_1\left(\tau, X\left(\tau; t, x\right)\right)
      - q_2\left(\tau, X\left(\tau; t, x\right)\right)
      } \dd \tau \dd x
    \\
    & \quad + \int_{X\left(t; 0, \mathcal{X}\right)} \int_0^t
      \modulo{q_2\left(\tau, X\left(\tau; t, x\right)\right)}
      \modulo{
      \mathcal E_1\left(\tau, t, x\right)
      -
      \mathcal E_2\left(\tau, t, x\right)} \dd \tau \dd x.
  \end{align*}
  Using the two changes of variable $\xi = X\left(0; t, x\right)$ and
  $\xi = X\left(\tau; t, x\right)$, we obtain that
  \begin{align*}
    &
      \quad \int_{X\left(t; 0, \mathcal{X}\right)} \modulo{u_1(t) - u_2(t)} \dd x
    \\
    &
      \le \int_{\mathcal{X}}
      \exp\left(
      \int_0^t
      p_1\left(s, X (s; 0, \xi)\right)
      \dd{s}
      \right)
      \modulo{u_{o,1}\left(\xi\right)
      - u_{o,2}\left(\xi\right)
      } \dd \xi
    \\
    &
      \quad + \int_{\mathcal{X}}
      \modulo{u_{o,2}\left(\xi\right)}
      \modulo{
      \exp\left(
      \int_0^t
      p_1\left(s, X (s; 0, \xi)\right)
      \dd{s}
      \right)
      -
      \exp\left(
      \int_0^t
      p_2\left(s, X (s; 0, \xi)\right)
      \dd{s}
      \right)
      } \dd \xi
    \\
    & \quad + \int_0^t \int_{X\left(\tau; 0, \mathcal{X}\right)}
      \modulo{q_1\left(\tau, \xi\right)
      - q_2\left(\tau, \xi\right)
      }
      \exp\left(
      \int_\tau^t
      p_1\left(s, X (s; \tau, \xi)\right)
      \dd{s}
      \right)
      \dd \xi \, \dd \tau
    \\
    & \quad
      +  \int_0^t \int_{X\left(\tau; 0, \mathcal{X}\right)}
      \modulo{q_2\left(\tau, \xi\right)}
    \\
    & \qquad\qquad \times
      \modulo{
      \exp\left(
      \int_\tau^t
      p_1\left(s, X (s; \tau, \xi)\right)
      \dd{s}
      \right)
      -
      \exp\left(
      \int_\tau^t
      p_2\left(s, X (s; \tau, \xi)\right)
      \dd{s}
      \right)
      } \dd \xi \dd \tau
    \\
    & \leq
      \mathcal{P} (t)
      \left(
      \norma{u_{o,1} - u_{o,2}}_{\LL1 (\mathcal{X}; \reali)}
      +
      \norma{q_1-q_2}_{\LL1 \left(X\left([0,t]; 0, \mathcal{X}\right);
      \reali\right)}
      \right)
    \\
    & \qquad +
      \mathcal{P}(t)
      \norma{u_{o,2}}_{\LL1 (\mathcal{X};\reali)}
      \norma{p_1 - p_2}_{\LL1 ([0,t]; \LL\infty(\mathcal{X}; \reali))}
    \\
    & \qquad +
      \mathcal{P}(t)
      \norma{q_2}_{\LL1\left(X\left([0,t]; 0, \mathcal{X}\right);\reali\right)}
      \norma{p_1 - p_2}_{\LL1 ([0,t]; \LL\infty(\mathcal{X}; \reali))} \,,
  \end{align*}
  where we set
  \begin{equation}
    \label{eq:35}
    \mathcal{P} (t) =   \exp
    \left(
      \max
      \left\{
        {\norma{p_1}_{\LL\infty ([0,t]\times\mathcal{X};\reali)} t}\,,\;
        {\norma{p_2}_{\LL\infty ([0,t]\times\mathcal{X};\reali)} t}
      \right\}
    \right) \,.
  \end{equation}
  Pass now to the second term in the right hand side
  of~\eqref{eq:stab-decomposition}, splitting among the different
  faces $\mathbb X_i^t$ for $i\in \{1, \ldots, m\}$ as defined
  in~\eqref{eq:20-new}:
  \begin{displaymath}
    \int_{X(t; [0, t[, \partial \mathcal{X})}
    \modulo{u_1(t) - u_2(t)} \dd x
    =
    \sum_{i=1}^{m}  \int_{\mathbb X_i^t}
    \modulo{u_1(t) - u_2(t)} \dd x.
  \end{displaymath}
  Fix $i\in \{1, \ldots, m\}$, i.e.  consider each term in the sum
  separately:
  \begin{eqnarray*}
    &
    & \int_{\mathbb X_i^t}
      \modulo{u_1(t) - u_2(t)} \dd x
    \\
    & \le
    & \int_{\mathbb X_i^t}
      \left|
      u_{b,1}\left(T (t,x), X\left(T (t,x); t, x\right)\right)
      \mathcal E_1\left(T (t,x), t, x\right)
      \right.
    \\
    &
    & \qquad\qquad
      \left.
      - u_{b,2}\left(T (t,x),  X\left(T (t,x); t, x\right)\right)
      \mathcal E_2\left(T (t,x), t, x\right)
      \right| \dd x
    \\
    &
    & \quad +
      \int_{\mathbb X_i^t} \int_{T (t,x)}^t
      \modulo{q_1\left(\tau, X\left(\tau; t, x\right)\right)
      \mathcal E_1\left(\tau, t, x\right)
      - q_2\left(\tau, X\left(\tau; t, x\right)\right)
      \mathcal E_2\left(\tau, t, x\right)} \dd \tau \dd x
    \\
    & \le
    & \int_{\mathbb X_i^t}
      \mathcal E_1\left(T (t,x), t, x\right)
    \\
    &
    & \qquad \qquad \times
      \modulo{
      u_{b,1}\left(T (t,x),  X\left(T (t,x); t, x\right)\right)
      -
      u_{b,2}\left(T (t,x),  X\left(T (t,x); t, x\right)\right)
      } \dd x
    \\
    &
    & \quad + \int_{\mathbb X_i^t}
      \modulo{u_{b,2}\left(T (t,x),  X\left(T (t,x); t, x\right)\right)}
      \modulo{
      \mathcal E_1\left(T (t,x), t, x\right)
      -
      \mathcal E_2\left(T (t,x), t, x\right)} \dd x
    \\
    &
    & \quad + \int_{\mathbb X_i^t} \int_{T (t,x)}^t
      \mathcal E_1\left(\tau, t, x\right)
      \modulo{q_1\left(\tau, X\left(\tau; t, x\right)\right)
      - q_2\left(\tau, X\left(\tau; t, x\right)\right)
      } \dd \tau \dd x
    \\
    &
    & \quad + \int_{\mathbb X_i^t} \int_{T (t,x)}^t
      \modulo{q_2\left(\tau, X\left(\tau; t, x\right)\right)}
      \, \modulo{
      \mathcal E_1\left(\tau, t, x\right)
      -
      \mathcal E_2\left(\tau, t, x\right)} \dd \tau \dd x.
  \end{eqnarray*}
  We now use the diffeomorphism $M_i$ as defined in~\eqref{eq:20-new},
  for $i \in \{1, \ldots, m\}$, and we use the set $\Xi^i_t$ as
  in~\eqref{eq:27}. We thus obtain, using~\eqref{eq:35}, that
  \begin{align*}
    &
      \quad \int_{\mathbb X_i^t}
      \modulo{u_1(t,x) - u_2(t,x)} \dd x
    \\
    & \leq
      \iint_{\Gamma_i}
      \exp\left(\int_\tau^t p_1 \left(s,X (s;\tau,\xi)\right)\dd{s}\right)
      \modulo{u_{b,1}(\tau,\xi) - u_{b,2}(\tau, \xi)} \,
      v_i (\tau,\xi) \dd \xi \dd\tau
    \\
    &
      \quad +
      \iint_{\Gamma_i} \modulo{u_{b,2}(\tau, \xi)}
    \\
    & \qquad \times
      \modulo{
      \exp\left(\int_\tau^t p_1 \left(s,X (s;\tau,\xi)\right)\dd{s}\right)
      -
      \exp\left(\int_\tau^t p_2 \left(s,X (s;\tau,\xi)\right)\dd{s}\right)
      } \,
      v_i (\tau,\xi) \dd \xi \dd\tau
    \\
    & \quad + \iint_{\Xi^i_t}
      \exp\left(\int_\tau^t p_1 \left(s,X (s;\tau,\xi)\right)\dd{s}\right)
      \modulo{q_1(\tau, \xi)- q_2(\tau, \xi)} \dd \tau \dd \xi
    \\
    & \quad + \iint_{\Xi^i_t}
      \modulo{q_2(\tau, \xi)}
    \\
    & \qquad \times
      \modulo{
      \exp\left(\int_\tau^t p_1 \left(s,X (s;\tau,\xi)\right)\dd{s}\right)
      -
      \exp\left(\int_\tau^t p_2 \left(s,X (s;\tau,\xi)\right)\dd{s}\right)
      }
      \dd \tau \dd \xi
    \\
    & \leq
      \mathcal{P}(t) \,
      \norma{v}_{\LL\infty ([0,t]\times\mathcal{X}; \reali^{n+m})} \,
      \norma{u_{b,1} - u_{b,2}}_{\LL1 (\Gamma_i; \reali)}
    \\
    & \quad +
      \mathcal{P}(t) \, \norma{v}_{\LL\infty ([0,t]\times\mathcal{X}; \reali^{n+m})}
      \,
      \norma{u_{b,2}}_{\LL1 (\Gamma_i; \reali)} \,
      \norma{p_1 - p_2}_{\LL1 ([0,t]; \LL\infty(\mathcal{X}; \reali))}
    \\
    & \quad +
      \mathcal{P}(t) \, \norma{q_1 - q_2}_{\LL1 (\Xi^i_t; \reali)}
    \\
    & \quad +
      \mathcal{P}(t) \, \norma{q_2}_{\LL1 (\Xi^i_t; \reali)} \,
      \norma{p_1 - p_2}_{\LL1 ([0,t]; \LL\infty(\mathcal{X}; \reali))}
    \\
    & \le \mathcal{P}(t) \left(\norma{v}_{\LL\infty ([0,t]\times\mathcal{X};
      \reali^{n+m})} \,
      \norma{u_{b,1} - u_{b,2}}_{\LL1 (\Gamma_i; \reali)}
      + \norma{q_1 - q_2}_{\LL1 (\Xi^i_t; \reali)}\right)
    \\
    & \quad +
      \mathcal{P}(t)
      \norma{v}_{\LL\infty ([0,t]\times\mathcal{X}; \reali^{n+m})} \,
      \norma{u_{b,2}}_{\LL1 (\Gamma_i; \reali)} \,
      \norma{p_1 - p_2}_{\LL1 ([0,t]; \LL\infty(\mathcal{X};\reali))}
    \\
    & \quad +
      \mathcal{P}(t)
      \norma{q_2}_{\LL1 (\Xi^i_t; \reali)} \,
      \norma{p_1 - p_2}_{\LL1 ([0,t]; \LL\infty(\mathcal{X};\reali))} \,.
  \end{align*}
  Therefore, using~\eqref{eq:stab-decomposition}, we deduce that
  \begin{eqnarray*}
    &
    & \norma{u_1(t) - u_2(t)}_{\LL1 (\mathcal{X}; \reali)}
    \\
    & \le
    & \mathcal{P}(t)
      \left(
      \norma{u_{o,1} - u_{o,2}}_{\LL1 (\mathcal{X}; \reali)}
      +
      \norma{q_1-q_2}_{\LL1 (X;([0,t]; 0, \mathcal{X});\reali)}
      \right)
    \\
    &
    & \quad +
      \mathcal{P}(t)
      \norma{u_{o,2}}_{\LL1 (\mathcal{X};\reali)}
      \norma{p_1 - p_2}_{\LL1 ([0,t]; \LL\infty(\mathcal{X}; \reali))}
    \\
    &
    & \quad +
      \mathcal{P}(t)
      \norma{q_2}_{\LL1 (X;([0,t]; 0, \mathcal{X});\reali)} \,
      \norma{p_1 - p_2}_{\LL1 ([0,t]; \LL\infty(\mathcal{X}; \reali))}
    \\
    &
    & \quad + \sum_{i=1}^{m} \,
      \mathcal{P}(t) \left(\norma{v}_{\LL\infty ([0,t]\times\mathcal{X};
      \reali^{n+m})} \,
      \norma{u_{b,1} - u_{b,2}}_{\LL1 (\Gamma_i; \reali)}
      + \norma{q_1 - q_2}_{\LL1 (\Xi^i_t; \reali)}\right)
    \\
    &
    & \quad + \sum_{i=1}^{m} \,
      \mathcal{P}(t)
      \norma{v}_{\LL\infty ([0,t]\times\mathcal{X}; \reali^{n+m})} \,
      \norma{u_{b,2}}_{\LL1 (\Gamma_i;\reali)} \,
      \norma{p_1 - p_2}_{\LL1 ([0,t]; \LL\infty(\mathcal{X}; \reali))}
    \\
    &
    & \quad + \sum_{i=1}^{m} \,
      \mathcal{P}(t)
      \norma{q_2}_{\LL1 (\Xi^i_t; \reali)} \,
      \norma{p_1 - p_2}_{\LL1 ([0,t]; \LL\infty(\mathcal{X};\reali))}
    \\
    & \le
    & \mathcal{P}(t) \norma{u_{o,1} - u_{o,2}}_{\LL1 (\mathcal{X}; \reali)}
    \\
    &
    & \quad + \mathcal{P}(t)
      \norma{v}_{\LL\infty ([0,t]\times\mathcal{X}; \reali^{n+m})}
      \norma{u_{b,1}- u_{b,2}}_{\LL1 ([0,t]\times \partial\mathcal{X};\reali)}
    \\
    &
    & \quad + \mathcal{P}(t)
      \norma{q_1-q_2}_{\LL1 ([0,t]\times\mathcal{X};\reali)}
    \\
    &
    & \quad + \mathcal{P}(t)
      \left(
      \norma{u_o}_{\LL1 (\mathcal{X}; \reali^k)}
      +
      \norma{v}_{\LL\infty ([0,t]\times\mathcal{X}; \reali^{n+m})}
      \norma{u_{b,2}}_{\LL1 ([0,t]\times\partial\mathcal{X};\reali)}\right)
      \norma{p_1-p_2}_{\LL1 ([0,t]; \LL\infty(\mathcal{X}; \reali))}
    \\
    &
    & \quad + \mathcal{P}(t)
      \norma{q_2}_{\LL1 ([0,t]\times \mathcal{X};\reali)} \,
      \norma{p_1-p_2}_{\LL1 ([0,t]; \LL\infty(\mathcal{X};\reali))} \,,
  \end{eqnarray*}
  proving~\eqref{eq:stability-linear}.
\end{proof}

\begin{proposition}
  \label{prop:ClassicalSolution}
  Let $v$ satisfy~\ref{ip:(v)} with $k=1$,
  $p \in \LL\infty(I \times \mathcal{X}; \reali)$,
  $q \in \LL1(I \times \mathcal{X}; \reali)$,
  $u_b \in \LL1 (I \times \partial \mathcal{X}; \reali)$ and $u_o$
  satisfy~\ref{ip:(u0)} with $k=1$. Then, formula~\eqref{eq:8} defines
  a solution $u = u (t,x)$ to~\eqref{eq:2} in the sense of
  Definition~\ref{def:mvsol}. Moreover,
  $u \in \C0 (I; \LL1 (\mathcal{X}; \reali))$.
\end{proposition}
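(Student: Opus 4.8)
The plan is to prove the two assertions separately: that the map $u$ defined by~\eqref{eq:8} belongs to $\C0(I;\LL1(\mathcal{X};\reali))$, and that it satisfies the semi-entropy inequality~\eqref{eq:mv}; the latter is essentially built in, since~\eqref{eq:8} is obtained by integrating~\eqref{eq:2} along the characteristics~\eqref{eq:3}. For the continuity, away from the characteristic hypersurface $\Sigma = \{(t,x) : x \in X(t;0,\partial\mathcal{X})\}$ — which is foliated by characteristics and hence meets every time slice in a Lebesgue null set — both branches of~\eqref{eq:8} depend continuously on $(t,x)$, because $(t;t_o,x_o)\mapsto X(t;t_o,x_o)$ is $\C1$ by~\ref{ip:(v)}, the weight $\mathcal{E}$ of~\eqref{eq:10} is continuous, and $T$ is continuous on its domain by \Cref{lem:t-new}. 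Fixing $t\in I$ and letting $t'\to t$, the ensuing a.e.\ convergence $u(t',\cdot)\to u(t,\cdot)$ upgrades to convergence in $\LL1(\mathcal{X};\reali)$ by dominated convergence, the dominating function being supplied by the a priori bound~\eqref{eq:17} of \Cref{lem:L1inf} on a common interval $[0,t'']$, which controls the mass both near infinity and near $\Sigma$. Hence $u\in\C0(I;\LL1(\mathcal{X};\reali))$.

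For~\eqref{eq:mv}, fix $\kappa\in\reali$ and $\varphi\in\Cc1(\mathopen]-\infty,\sup I\mathclose[\times\reali^{n+m};\reali_+)$ and treat the $(\cdot)^+$ case, the $(\cdot)^-$ one being symmetric. By construction, $u$ from~\eqref{eq:8} is a distributional solution of $\partial_t u + \diver_x(v\,u) = p\,u+q$ in $I\times\mathring{\mathcal{X}}$, equivalently of $\partial_t u + v\cdot\grad_x u = (p-\diver_x v)\,u+q$; since $v\in\C1$ with locally bounded divergence — a fortiori $v\in\Wloc11$ — the DiPerna--Lions renormalization property applies, so, writing $g(t,x,u)=p\,u+q$ and $\diver_x f(t,x,\kappa)=\kappa\,\diver_x v$ and using the elementary identity $(u-\kappa)^+-\sgn{}^+(u-\kappa)\,u=-\kappa\,\sgn{}^+(u-\kappa)$, the function $(u-\kappa)^+$ satisfies
\[
  \partial_t(u-\kappa)^+ + \diver_x\bigl(v\,(u-\kappa)^+\bigr)
  = \sgn{}^+(u-\kappa)\,\bigl[g(t,x,u)-\diver_x f(t,x,\kappa)\bigr]
\]
in $\mathcal{D}'(I\times\mathring{\mathcal{X}})$.

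Now bring in the data. Since $v_i>V>0$ on $I\times\partial\mathcal{X}$ for $i=1,\dots,m$ by~\ref{ip:(v)}, every characteristic meets $\partial\mathcal{X}$ transversally, so~\eqref{eq:8} exhibits $u$ as attaining, in the strong $\Lloc1$ sense, the boundary value $u_b$ on $I\times\partial\mathcal{X}$ and the initial value $u_o$ at $t=0$. Multiplying the displayed identity by $\varphi$ and integrating by parts over $I\times\mathcal{X}$ — the outer unit normal on the $i$-th face $\{x_i=0\}$ of $\mathcal{X}$ being $-e_i$, and $\varphi$ vanishing near $t=\sup I$ — one obtains that the sum of the first four terms in~\eqref{eq:mv} equals $-\sum_{i=1}^m\int_I\int_{\{x_i=0\}}(u_b-\kappa)^+\,v_i\,\varphi\dd\xi\dd t$. (The changes of variables $\xi=X(0;t,x)$, $\xi=X(\tau;t,x)$ and the diffeomorphisms $M_i$ of \Cref{lem:t-new}, with Jacobian~\eqref{eq:det-jabobian}, are exactly what make these initial and boundary contributions explicit; in fact one may bypass renormalization and verify~\eqref{eq:mv} by substituting~\eqref{eq:8} into it and performing these same changes of variables, along the lines of the proof of \Cref{lem:L1inf}.) Adding the last term of~\eqref{eq:mv}, namely $\Lip(f)\int_I\int_{\partial\mathcal{X}}(u_b-\kappa)^+\varphi\dd\xi\dd t=\sum_{i=1}^m\int_I\int_{\{x_i=0\}}\Lip(f)\,(u_b-\kappa)^+\,\varphi\dd\xi\dd t$, the whole left hand side of~\eqref{eq:mv} becomes $\sum_{i=1}^m\int_I\int_{\{x_i=0\}}(u_b-\kappa)^+\bigl(\Lip(f)-v_i\bigr)\varphi\dd\xi\dd t\ge0$, because $\Lip(f)=\norma{v}_{\LL\infty(I\times\mathcal{X};\reali^{n+m})}$ is a uniform Lipschitz constant of $u\mapsto v(t,x)\,u$, whence $\Lip(f)\ge v_i>0$ on $\partial\mathcal{X}$, while $(u_b-\kappa)^+\ge0$ and $\varphi\ge0$. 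This is~\eqref{eq:mv}, so~\eqref{eq:8} solves~\eqref{eq:2} in the sense of \Cref{def:mvsol}.

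The step I expect to be the main obstacle is the integration by parts up to the boundary used above — namely, showing that the a priori interior renormalized identity produces precisely the initial trace $u_o$ and the boundary trace $u_b$ with the correct weight $v_i$, in a regime where $u$ is only $\C0(I;\LL1)$ (no $\tv$ or $\LL\infty$ control), $v$ is only $\C1$, and $p$ is only $\LL\infty$. This is exactly where the explicit representation~\eqref{eq:8}, the transversality $v_i>V$ on $\partial\mathcal{X}$, and the change-of-variables machinery of \Cref{lem:t-new} are essential.
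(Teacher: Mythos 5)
Your proposal is correct in outline, and for the verification that \eqref{eq:8} satisfies \eqref{eq:mv} it takes a genuinely different route from the paper. The paper does not argue via renormalization: it verifies the semi--entropy inequality by direct substitution of the explicit formula \eqref{eq:8} into \eqref{eq:mv}, handling the initial--datum part exactly as in \cite[Lemma~2.7]{Elena2015} and \cite[Lemma~5.1]{Herty2011} and the boundary part by the change of variables $M_i$ of \Cref{lem:t-new} with Jacobian \eqref{eq:det-jabobian} --- which is precisely the ``bypass'' you mention parenthetically. Your main route (interior renormalized identity for $(u-\kappa)^\pm$, then Green's formula up to $\{t=0\}$ and $\partial\mathcal{X}$, then absorbing the boundary flux $-\sum_i v_i (u_b-\kappa)^\pm$ into the $\Lip(f)$ term, using $\Lip(f)\ge v_i>0$) is a clean reformulation and the sign bookkeeping is right; what it buys is a transparent explanation of \emph{why} \eqref{eq:mv} holds with that particular boundary term. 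What it does not buy is any saving on the technical work: the step you yourself flag --- that the merely $\LL1$ solution given by \eqref{eq:8} attains $u_o$ and $u_b$ as strong traces so that the integration by parts produces exactly those terms, with no defect --- is the whole content of the characteristics computation, and filling it requires the same machinery of \Cref{lem:t-new} (and essentially the same manipulations as in \Cref{lem:L1inf}) that the paper uses directly; also, invoking DiPerna--Lions is heavier than needed here, since with $v\in\C1$ the flow is classical and the renormalized identity follows by composing \eqref{eq:8} with the flow. So your argument is acceptable as a proof \emph{scheme}, but as written the crucial trace/integration-by-parts step is asserted rather than proved, and the honest completion coincides with the paper's direct-substitution proof. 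For the $\C0(I;\LL1)$ continuity your argument (pointwise a.e.\ convergence of both branches of \eqref{eq:8} off the characteristic interface, upgraded to $\LL1$ via dominated convergence supported by \eqref{eq:17}) is essentially identical to the paper's, which splits $\mathcal{X}$ into the region reached from the initial datum, the thin transition region of vanishing measure, and the region reached from the boundary.
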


\begin{proof}
  The first part of the proof amounts to a careful piecing together
  various proofs found in the literature. In particular, the part of
  the solution depending on the initial data is dealt with exactly as
  in~\cite[Lemma~2.7]{Elena2015} and~\cite[Lemma~5.1]{Herty2011}. The
  part depending on the boundary datum is treated in the same way,
  exploiting the change of variables detailed in
  Lemma~\ref{lem:t-new}.

  To prove the $\C0$ regularity of the solution with respect to time,
  fix a $\bar t \in I$ and a sequence $t_h$, with $t_h \in I$,
  converging to $\bar t$. Then, assuming first that $t_h > t$, we have
  \begin{eqnarray*}
    \norma{u (t_h) - u (\bar t)}_{\LL1 (\mathcal{X};\reali)}
    & =
    & \int_{X (t_h;0,\mathcal{X})} \modulo{u (t_h, x) - u (\bar t, x)} \dd{ x}
    \\
    &
    & + \int_{\mathcal{X} \setminus (X (t_h;0,\mathcal{X}) \cup X (\bar t;[0,\bar t[,\partial \mathcal{X}))}
      \modulo{u (t_h, x) - u (\bar t, x)} \dd{ x}
    \\
    &
    & + \int_{X (\bar t;[0,\bar t[,\partial \mathcal{X})} \modulo{u (t_h, x) - u (\bar t, x)} \dd{ x} \,.
  \end{eqnarray*}
  The second term vanishes as $h \to +\infty$, since it is the
  integral of a bounded quantity over a set of vanishing
  measure. Consider now the first term, the third one can be treated
  similarly.
  \begin{eqnarray*}
    &
    & \int_{X (t_h;0,\mathcal{X})} \modulo{u (t_h, x) - u (\bar t, x)} \dd{ x}
    \\
    & =
    & \int_{\mathcal{X}} \modulo{u (t_h, x) - u (\bar t, x)} \,
      \caratt{X (t_h;0,\mathcal{X})} ( x) \, \dd{ x}
    \\
    & \leq
    & \int_{\mathcal{X}}
      \modulo{
      u_o\left(X (0; t_h,x)\right)\, \mathcal{E} (\tau, t_h, x)
      -
      u_o\left(X (0; \bar t,x)\right)\, \mathcal{E} (\tau, \bar t, x)}
      \,
      \caratt{X (t_h;0,\mathcal{X})} ( x)
      \dd x
    \\
    & +
    & \int_{\mathcal{X}}
      \left|
      \int_0^{t_h}
      q\left(\tau, X (\tau;t_h, x)\right) \,
      \mathcal{E}\left(\tau, t_h, x\right) \,
      \dd\tau
      \right.
    \\
    &
    & \qquad
      \left.
      -
      \int_0^{\bar t}
      q\left(\tau, X (\tau; \bar t, x)\right) \,
      \mathcal{E}\left(\tau, \bar t, x\right) \,
      \dd \tau
      \right|  \,
      \caratt{X (t_h;0,\mathcal{X})} ( x)\dd x
  \end{eqnarray*}
  As $h \to +\infty$, we have that
  \begin{eqnarray*}
    u_o\left(X (0; t_h,x)\right)\, \mathcal{E} (\tau, t_h, x)
    & \to
    & u_o\left(X (0; \bar t,x)\right)\, \mathcal{E} (\tau, \bar t, x)
    \\
    \int_0^{t_h}
    q\left(\tau, X (\tau;t_h, x)\right) \,
    \mathcal{E}\left(\tau, t_h, x\right) \,
    \dd\tau
    & \to
    & \int_0^{\bar t}
      q\left(\tau, X (\tau; \bar t, x)\right) \,
      \mathcal{E}\left(\tau, \bar t, x\right) \,
      \dd\tau
  \end{eqnarray*}
  for a.e.~$x \in \mathcal{X}$, so that the corresponding integrals
  vanish by Lebesgue Dominated Convergence Theorem, which we can apply
  thanks to the $\LL1$ \emph{a priori} bound \eqref{eq:17}.
\end{proof}

\subsection{The General Case of a System}

Below, in the various estimates we use the following norms:
\begin{displaymath}
  \begin{array}{@{}r@{\,}c@{\,}l@{\quad}r@{\,}c@{\,}l@{}}
    \norma{u}_{\LL1 (\mathcal{X};\reali^k)}
    & =
    & \sum_{h=1}^k \int_{\mathcal{X}} \modulo{u^h (x)} \dd{x}
    &
      \norma{u}_{\LL\infty (I\times\mathcal{X}; \reali^k)}
    & =
    & \sum_{h=1}^k \norma{u^h}_{\LL\infty (I\times\mathcal{X}; \reali)}
    \\
    \norma{u}_{\LL\infty(I;\LL1(\mathcal{X}; \reali^k))}
    & =
    & \sum_{h = 1}^k
      \norma{u^h}_{\LL\infty(I;\LL1\left(\mathcal{X}; \reali\right))}\,.
  \end{array}
\end{displaymath}

\begin{proofof}{Theorem~\ref{thm:main}}
  The proof is divided in several steps. Let $I = [0,T]$ for $T>0$.

  \paragraph{Construction of the Operator $\mathcal T$.}  In the
  Banach space $\C0 (I; \LL1 (\mathcal{X}; \reali^k))$, for
  \begin{equation}
    \label{eq:24}
    M > \norma{u_o}_{\LL1 (\mathcal{X};\reali^k)} + 1,
  \end{equation}
  introduce the closed subset $X$ and the norm $\norma{\cdot}_X$:
  \begin{align}
    \label{eq:X_M}
    X
    = \
    & \left\{
      w \in \C0(I; \LL1(\mathcal{X}; \reali^k)) \colon
      \norma{w}_{\LL\infty(I;\LL1(\mathcal{X}; \reali^k)) }
      \leq
      M
      \right\} \,,
    \\
    \label{eq:X_M-norm}
    \norma{w}_X
    = \
    & \sum_{h = 1}^k
      \norma{w^h}_{\LL\infty(I;\LL1\left(\mathcal{X}; \reali\right))} \,.
  \end{align}
  Define the operator
  \begin{equation}
    \label{eq:operator-T}
    \begin{array}{rccc}
      \mathcal T:
      & X
      & \longrightarrow
      & X
      \\
      &
        w
      & \longmapsto
      & u \equiv \left(u^1, \ldots, u^k\right)
    \end{array}
  \end{equation}
  where, for every $h \in \left\{1, \ldots, k\right\}$, $u^h$ solves
  \begin{equation}
    \label{eq:T-sol}
    \!\!\!\!\left\{
      \begin{array}{@{\,}l@{\qquad}r@{\,}c@{\,}l@{}}
        \partial_t u^h
        +
        \diver_x \left(v^h (t,x) u^h\right)
        =
        p^h \left(t, x, w(t)\right) u^h
        \\
        \hspace{4.6cm} + q^h \left(t, x, w(t, x), w(t)\right)
        & (t,x)
        & \in
        & I \times\mathcal{X}
        \\
        u^h (t,\xi) = u_b^h \left(t, \xi, w (t)\right)
        & (t,\xi)
        & \in
        & I \times \partial\mathcal{X}
        \\
        u^h (0,x) = u_o^h (x)
        & x
        & \in
        & \mathcal{X}\,.
      \end{array}
    \right.
  \end{equation}

  \paragraph{$\mathcal T$ is Well Defined.}  We prove that, for
  $w \in X$ and $h \in \left\{1, \ldots, k\right\}$, the source term
  in~\eqref{eq:T-sol}
  \begin{displaymath}
    \mathcal{G}^h (t,x, u^h)
    =
    \mathcal{P}^h (t, x) \, u^h + \mathcal{Q}^h (t, x)
    \quad \mbox{ where } \quad
    \begin{array}{r@{\,}c@{\,}l}
      \mathcal{P}^h (t, x)
      & =
      & p^h \left(t, x, w(t)\right)
      \\
      \mathcal{Q}^h (t, x)
      & =
      & q^h \left(t, x, w(t, x), w(t)\right)
    \end{array}
  \end{displaymath}
  is such that
  $\mathcal{P}^h \in \LL\infty(I \times \mathcal{X}; \reali)$ and
  $\mathcal{Q}^h \in \LL1(I \times \mathcal{X}; \reali)$.

  By~\ref{hyp:g_a}, for every $t \in I$ and $x \in \mathcal{X}$, using
  also~\eqref{eq:X_M}, we have
  \begin{eqnarray}
    \nonumber
    \modulo{\mathcal{P}^h (t,x)}
    & =
    & \modulo{p^h \left(t, x, w(t)\right)}
      \; \le \, P_1 + P_2 \; \norma{w(t)}_{\LL1(\mathcal{X}; \reali^k)} \,;
    \\
    \label{eq:G_a_Linfty}
    \norma{\mathcal{P}^h}_{\LL\infty(I \times \mathcal{X}; \reali)}
    & \le
    & P_1 + P_2\, M,
  \end{eqnarray}
  proving that $(t,x) \mapsto \mathcal{P}^h(t,x)$ is in
  $\LL\infty (I \times \mathcal{X}; \reali)$. On the other hand,
  by~\ref{hyp:g_b} we have
  \begin{eqnarray}
    \nonumber
    &
    & \norma{\mathcal{Q}^h}_{\LL1 ([0,T]\times\mathcal{X};\reali)}
    \\
    \nonumber
    & =
    & \int_0^T \int_{\mathcal{X}}
      \modulo{\mathcal{Q}^h (t,x)}
      \dd x\, \dd t
    \\
    \nonumber
    & =
    & \int_0^T \int_{\mathcal{X}}
      \modulo{q^h\left(t, x, w(t,x), w(t)\right)}
      \dd x\, \dd t
    \\
    \nonumber
    & \leq
    & Q_1 \int_0^T \int_{\mathcal{X}} \norma{w(t,x)} \dd x\, \dd t
    \\
    \nonumber
    & \quad
    & + \int_0^T \int_{\mathcal{X}} Q_2(x)
      \norma{w(t)}_{\LL1(\mathcal{X};\reali^k)}\dd x\, \dd t
      + Q_3 \int_0^T \int_{\mathcal{X}} \norma{w(t,x)} \,
      \norma{w(t)}_{\LL1(\mathcal{X};\reali^k)}
      \dd x\, \dd t
    \\
    \label{eq:34}
    & \leq
    & Q_1 T \norma{w}_X + \norma{Q_2}_{\LL1(\mathcal{X}; \reali)}
      T \norma{w}_X
      + Q_3 T \norma{w}_X^2,
  \end{eqnarray}
  proving that $(t, x) \mapsto \mathcal{Q}^h (t,x)$ is in
  $\LL1(I \times \mathcal{X}; \reali)$.

  Now we prove that, for every $w \in X$ and
  $h \in \left\{1, \ldots, k\right\}$, the boundary term
  $\mathcal{U}_b^h (t,\xi) = u^h_b \left(t, \xi, w(t)\right)$
  in~\eqref{eq:T-sol} satisfies
  $\mathcal{U}_b^h \in \LL1 (I \times \partial \mathcal{X}; \reali)$.
  By~\ref{ip:(ub)} we have
  \begin{eqnarray*}
    \norma{\mathcal{U}^h_b}_{\LL1 (I\times \partial\mathcal{X}; \reali)}
    & =
    & \int_0^T \!\! \int_{\partial \mathcal{X}}
      \modulo{u_b^h\left(t, \xi, w(t)\right)}
      \dd \xi \dd t
    \\
    & \leq
    & \int_0^T \!\! \int_{\partial \mathcal{X}} B(\xi)
      \norma{w(t)}_{\LL1(\mathcal{X}; \reali^k)}\dd \xi \dd t
      +
      \int_0^T \!\! \int_{\partial \mathcal{X}} B(\xi)
      \dd \xi \dd t
    \\
    & \leq
    & \norma{B}_{\LL1(\partial \mathcal{X}; \reali)}
      \left(\norma{w}_X + 1\right) \, T \,.
  \end{eqnarray*}
  Hence Proposition~\ref{prop:ClassicalSolution} applies
  to~\eqref{eq:T-sol}.  To conclude this step, we need to show that
  the solution
  $u(t, x) \equiv \left(u^1(t, x), \ldots, u^k(t,x)\right)$ belongs to
  $X$ in~\eqref{eq:X_M}.  By~\eqref{eq:17}, \eqref{eq:G_a_Linfty},
  \eqref{eq:34} and since $w \in X$, for $t \in I$,
  \begin{eqnarray*}
    \norma{u^h (t)}_{\LL1 (\mathcal{X};\reali)}
    & \leq
    & e^{(P_1 + P_2 M)t}
      \left(
      \norma{\mathcal{Q}^h}_{\LL1 ([0,t]\times\mathcal{X};\reali)}
      +
      \norma{u_o^h}_{\LL1 (\mathcal{X};\reali)}\right)
    \\
    &
    & + e^{(P_1 + P_2 M)t}
      \sum_{i=1}^{m}
      \iint_{\Gamma_i}
      \modulo{u_b^h\left(\tau,\xi,w (\tau)\right)} \, v^h_i (\tau,\xi)
      \dd\tau \dd\xi
    \\
    & \leq
    &  \Big[
      \left(
      Q_1
      +
      \norma{Q_2}_{\LL1 (\mathcal{X};\reali)}
      +
      Q_3 \, \norma{w}_X
      \right)
      T \,
      \norma{w}_X
      +
      \norma{u_o^h}_{\LL1 (\mathcal{X};\reali)}
    \\
    &
    & +
      \norma{B}_{\LL1(\partial \mathcal{X}; \reali)} \,
      \norma{v}_{\LL\infty (I\times\mathcal{X};\reali^{k\times(n+m)})} \,
      T \left(\norma{w}_X + 1\right) \Big]
      e^{(P_1 + P_2 M)t}
    \\
    & \leq
    &  \Big[
      \left(
      Q_1
      +
      \norma{Q_2}_{\LL1 (\mathcal{X};\reali)}
      +
      Q_3 \, M
      \right)
      T \, M
      +
      \norma{u_o^h}_{\LL1 (\mathcal{X};\reali)}
    \\
    &
    & +
      \norma{B}_{\LL1(\partial \mathcal{X}; \reali)} \,
      \norma{v}_{\LL\infty (I\times\mathcal{X};\reali^{k\times(n+m)})}\,
      T (M + 1) \Big]
      e^{(P_1 + P_2 M)t}
    \\
    & \leq
    & \left(
      \norma{u_o^h}_{\LL1 (\mathcal{X};\reali)}
      +
      \frac{1}{2k}\right)
      e^{(P_1 + P_2 M)T},
  \end{eqnarray*}
  whence $\norma{u (t)}_{\LL1 (\mathcal{X};\reali^k)} \leq M$, once
  $T$ is sufficiently small, thanks to the choice~\eqref{eq:24} of
  $M$.

  \paragraph{$\mathcal T$ is a Contraction.} Fix $\hat w$ and
  $\check w$ in $X_M$ and call $\hat u =\mathcal{T} \hat w$,
  $\check u = \mathcal{T} \check w$. Use the notation
  \begin{displaymath}
    \!\!\!
    \begin{array}{@{}r@{\,}c@{\,}lr@{\,}c@{\,}lr@{\,}c@{\,}l@{}}
      \hat{\mathcal{P}}^h (t,x)
      & =
      & p^h\left(t, x, \hat w (t)\right),
      & \hat{\mathcal{Q}}^h (t,x)
      & =
      & q^h\left(t, x, \hat w (t,x), \hat w (t)\right),
      & \hat{\mathcal{U}}_b^h (t,\xi)
      & =
      & u_b^h\left(t, \xi, \hat w (t)\right),
      \\
      \check{\mathcal{P}}^h (t,x)
      & =
      & p^h\left(t, x, \check w (t)\right),
      & \check{\mathcal{Q}}^h (t,x)
      & =
      & q^h\left(t, x, \check w (t,x), \check w (t)\right),
      & \check{\mathcal{U}}_b^h (t,\xi)
      & =
      & u_b^h\left(t, \xi, \check w (t)\right).
    \end{array}
  \end{displaymath}
  Then, by Lemma~\ref{lem:stability-linear-system} and
  by~\eqref{eq:G_a_Linfty}, we have:
  \begin{align}
    \nonumber
    & \norma{\hat u^h (t) - \check u^h (t)}_{\LL1 (\mathcal{X}; \reali)}
    \\
    \nonumber
    \le
    & \, e^{(P_1 + P_2 M) t} \,
      \norma{v}_{\LL\infty ([0,t]\times\mathcal{X};\reali^{n+m})}
      \norma{\hat{\mathcal{U}}_b^h - \check{\mathcal{U}}_b^h}
      _{\LL1 ([0,t] \times \partial\mathcal{X};\reali)}
    \\
    \nonumber
    & \quad +
      e^{(P_1 + P_2 M) t}
      \norma{\hat{\mathcal{Q}}^h - \check{\mathcal{Q}}^h}
      _{\LL1 ([0,t] \times \mathcal{X};\reali)}
    \\
    \nonumber
    & \quad +
      \left(
      M
      +
      \norma{v}_{\LL\infty (I\times \mathcal{X}; \reali^{k\times (n+m)})}
      \norma{\check{\mathcal{U}}^h_b}_{\LL1 ([0,t]\times\partial\mathcal{X};\reali)}
      +\norma{\check{\mathcal{Q}}^h}_{\LL1 ([0,t]\times\mathcal{X};\reali)}
      \right)
    \\
    \label{eq:28}
    & \qquad
      \times e^{(P_1+P_2 M)t}
      \norma{\hat{\mathcal{P}}^h - \check{\mathcal{P}}^h}_{\LL1 ([0,t]; \LL\infty(\mathcal{X}; \reali))} \,.
  \end{align}
  By~\ref{hyp:g_a} we have:
  \begin{align}
    \nonumber
    \norma{\hat{\mathcal{P}}^h - \check{\mathcal{P}}^h}
    _{\LL1([0, t]; \LL\infty(\mathcal{X}; \reali))}
    & \le \int_0^t \norma{\hat{\mathcal{P}}^h(s) - \check{\mathcal{P}}^h(s)}
      _{\LL\infty(\mathcal{X}; \reali)} \dd s
    \\
    \nonumber
    & \le P_2
      \int_0^t \norma{\hat w(s) - \check w(s)}
      _{\LL1(\mathcal{X}; \reali^k)} \dd s
    \\
    \label{eq:29}
    & \le P_2 \, \norma{\hat w - \check w}_X \, T \,.
  \end{align}
  By~\ref{hyp:g_b} we have:
  \begin{eqnarray}
    \nonumber
    &
    & \norma{\hat{\mathcal{Q}}^h - \check{\mathcal{Q}}^h}_{\LL1 ([0,t] \times \mathcal{X};\reali)}
    \\
    \nonumber
    & \leq
    & Q_1 \, \norma{\hat w - \check w}_{\LL1 ([0,t]\times\mathcal{X}; \reali^k)}
      +
      Q_3 \, \norma{\hat w}_{\LL\infty([0,t];\LL1 (\mathcal{X}; \reali^k))}
      \, \norma{\hat w - \check w}_{\LL1 ([0,t]\times\mathcal{X}; \reali^k)}
    \\
    \nonumber
    &
    & +
      Q_3 \, \norma{\check w}_{\LL\infty ([0,t]; \LL1 (\mathcal{X}; \reali^k))} \,
      \, \norma{\hat w - \check w}_{\LL1([0,t] \times \mathcal{X}; \reali^k)}
    \\
    \label{eq:30}
    & \leq
    & (Q_1 + 2 \, M \, Q_3) \, \norma{\hat w - \check w}_X \, T \,.
  \end{eqnarray}
  Similarly, by~\ref{ip:(ub)}, we have:
  \begin{equation}
    \label{eq:31}
    \norma{\hat{\mathcal{U}}_b^h - \check{\mathcal{U}}_b^h}_{\LL1 ([0,t] \times \partial\mathcal{X};\reali)}
    \leq
    \norma{B}_{\LL1 (\partial \mathcal{X}; \reali)} \,
    \norma{\hat w - \check w}_X \, T \,.
  \end{equation}
  Therefore $\mathcal{T}$ is a contraction as soon as $T$ is
  sufficiently small.

  \paragraph{Existence of a Solution for Small Times.} Proving that
  the unique fixed point of $\mathcal{T}$ solves~\eqref{eq:1} in the
  sense of Definition~\ref{def:sol} amounts to pass to the limit in
  the integral inequality~\eqref{eq:mv}. This is possible thanks to
  the strong convergence ensured by the choice~\eqref{eq:X_M-norm} of
  the norm in $X$. The proof of~\ref{thm:it:1} is completed.

  \paragraph{Uniqueness.} Assume that~\eqref{eq:32} admits the
  solutions $\hat u$ and $\check u$ in the sense of
  Definition~\ref{def:sol}. Then, their difference
  $\delta = \hat u - \check u$ solves
  \begin{displaymath}
    \left\{
      \begin{array}{l}
        \partial_t \delta^h + \div \left(v^h (t,x) \, \delta^h\right)
        =
        \hat{\mathcal{G}}^h (t,x)  - \check{\mathcal{G}}^h (t,x)
        \\
        \delta^h (t,\xi)
        =
        \hat{\mathcal{U}}^h_b (t,\xi) - \check{\mathcal{U}}^h_b (t,\xi)
        \\
        \delta^h (0,x) = 0
      \end{array}
    \right.
  \end{displaymath}
  in the sense of Definition~\ref{def:sol}, where
  \begin{displaymath}
    \begin{array}{r@{\,}c@{\,}l@{;\qquad}r@{\,}c@{\,}l}
      \hat{\mathcal{G}}^h (t,x)
      & =
      & p^h\left(t,x,\hat u (t)\right) \hat u^h
        + q^h\left(t,x,\hat u, \hat  u (t)\right)
      & \hat{\mathcal{U}}^h_b (t,\xi)
      & =
      & \hat u_b^h \left(t,\xi,\hat u (t)\right) \,;
      \\
      \check{\mathcal{G}}^h (t,x)
      & =
      & p^h\left(t,x,\check u (t)\right) \check u^h
        + q^h\left(t,x,\check u, \check  u (t)\right)
      & \check{\mathcal{U}}^h_b (t,\xi)
      & =
      & \check u_b^h \left(t,\xi,\check u (t)\right) \,.
    \end{array}
  \end{displaymath}
  A straightforward application of the classical doubling of variable
  method~\cite{Kruzkov}, see~\cite[Lemma~16, Lemma~17]{Martin},
  \cite[Theorem~7.28]{MalekEtAlBook}, and
  also~\cite[Proposition~2.8]{Elena2015}, leads to the stability
  estimate
  \begin{eqnarray*}
    \norma{\delta^h (t)}_{\LL1 (\mathcal{X}; \reali)}
    & \leq
    & \int_0^t
      \norma{
      \hat{\mathcal{G}}^h (\tau)  - \check{\mathcal{G}}^h (\tau)
      }_{\LL1 (\mathcal{X}; \reali)} \dd\tau
    \\
    &
    & +
      \norma{v^h}_{\LL\infty (I\times\mathcal{X};\reali^{n+m})} \;
      \int_0^t
      \norma{
      \hat{\mathcal{U}}^h_b (\tau) - \check{\mathcal{U}}^h_b (\tau)
      }_{\LL1 (\partial \mathcal{X}; \reali)} \dd\tau \,.
  \end{eqnarray*}
  The assumptions~\ref{hyp:g_a} and~\ref{hyp:g_b} allow now to use
  Gronwall Lemma, proving that $\delta \equiv 0$.

  \bigskip

  \paragraph{Continuous Dependence on the Initial Datum.} With the
  notation in~\ref{thm:it:ID}, define
  \begin{displaymath}
    \begin{array}{@{}r@{\,}c@{\,}lr@{\,}c@{\,}lr@{\,}c@{\,}l@{}}
      \hat{\mathcal{P}}^h (t,x)
      & =
      & p^h\left(t, x, \hat u (t)\right),
      & \hat{\mathcal{Q}}^h (t,x)
      & =
      & q^h\left(t, x, \hat u (t,x), \hat u (t)\right),
      & \hat{\mathcal{U}}_b^h (t,\xi)
      & =
      & u_b^h\left(t, \xi, \hat u (t)\right),
      \\
      \check{\mathcal{P}}^h (t,x)
      & =
      & p^h\left(t, x, \check u (t)\right),
      & \check{\mathcal{Q}}^h (t,x)
      & =
      & q^h\left(t, x, \check u (t,x), \check u (t)\right),
      & \check{\mathcal{U}}_b^h (t,\xi)
      & =
      & u_b^h\left(t, \xi, \check u (t)\right),
    \end{array}
  \end{displaymath}
  for $t \in I$ and $h \in \left\{1, \ldots, k\right\}$. A further
  application of Lemma~\ref{lem:stability-linear-system} allows to
  estimate the difference between the solutions $\hat u$ and
  $\check u$.
  \begin{equation}
    \begin{split}
      \label{eq:stab-est-proof}
      & \norma{\hat{u}^h(t) - \check{u}^h(t)}_{\LL1
        (\mathcal{X};\reali)}
      \\
      \le & \; e^{(P_1 + P_2 M) t} \left(\norma{\hat u_{o,h} - \check
          u_{o,h}}_{\LL1 (\mathcal{X}; \reali)} + \norma{v}_{\LL\infty
          ([0,t]\times\mathcal{X};\reali^{n+m})}
        \norma{\hat{\mathcal{U}}^h - \check{\mathcal{U}}^h} _{\LL1
          ([0,t] \times \partial\mathcal{X};\reali)}\right)
      \\
      & \quad + e^{(P_1 + P_2 M) t}
      \left(\norma{\hat{\mathcal{Q}}^h-\check{\mathcal{Q}}^h} _{\LL1
          ([0,t] \times \mathcal{X};\reali)} + K \norma{\hat{\mathcal
            P}^h - \check{\mathcal P}^h} _{\LL1([0, t];
          \LL\infty(\mathcal{X}; \reali))}\right),
    \end{split}
  \end{equation}
  where, by~\ref{hyp:g_b} and~\ref{ip:(ub)},
  \begin{equation*}
    \begin{split}
      K & = \norma{\hat u_{o,h} }_{\LL1 (\mathcal{X}; \reali)} +
      \norma{v}_{\LL\infty ([0,t]\times\mathcal{X};\reali^{n+m})}
      \norma{\check{\mathcal{U}}^h} _{\LL1 ([0,t] \times
        \partial\mathcal{X};\reali)} +\norma{\check{\mathcal{Q}}^h}
      _{\LL1 ([0,t] \times \mathcal{X};\reali)}
      \\
      & \le M + \norma{v}_{\LL\infty
        ([0,t]\times\mathcal{X};\reali^{n+m})}
      \norma{B}_{\LL1(\partial \mathcal{X}; \reali)} \left(M +
        1\right) \, T + Q_1 T M + \norma{Q_2}_{\LL1(\mathcal{X};
        \reali)} T M + Q_3 T M^2.
    \end{split}
  \end{equation*}
  Using~\ref{ip:(ub)}, \ref{hyp:g_b} and~\ref{hyp:g_a}, we have:
  \begin{eqnarray*}
    \norma{\hat{\mathcal{U}}^h - \check{\mathcal{U}}^h}
    _{\LL1 ([0,t]\times \partial\mathcal{X}; \reali)}
    & \leq
    & \norma{B}_{\LL1 (\partial\mathcal{X};\reali)} \,
      \norma{\hat u - \check u}_{\LL1 ([0,t]\times\mathcal{X}; \reali^k)},
    \\
    \norma{\hat{\mathcal{Q}}^h - \check{\mathcal{Q}}^h}
    _{\LL1 ([0,t]\times \mathcal{X}; \reali)}
    & \leq
    & Q_1 \norma{\hat u^h - \check u^h}_{\LL1 ([0,t]\times\mathcal{X}; \reali)}
    \\
    &
    &  +
      Q_3
      \left(
      \norma{\hat u}_{\LL\infty ([0,t];\LL1 (\mathcal{X};\reali^k))}
      +
      \norma{\check u}_{\LL\infty ([0,t];\LL1 (\mathcal{X};\reali^k))}
      \right)
    \\
    &
    & \qquad \times
      \norma{\hat u - \check u}_{\LL1 ([0,t]\times \mathcal{X};\reali^k)}
    \\
    & \leq
    & Q_1\,  \norma{\hat u^h - \check u^h}_{\LL1 ([0,t]\times\mathcal{X}; \reali)}
      +
      2 M Q_3 \,
      \norma{\hat u - \check u}_{\LL1 ([0,t]\times \mathcal{X};\reali^k)},
    \\
    \norma{\hat{\mathcal{P}}^h - \check{\mathcal{P}}^h}
    _{\LL1 ([0,t]; \LL\infty\left(\mathcal{X}; \reali\right))}
    & \le
    & \int_0^t \norma{\hat{\mathcal{P}}^h(s) - \check{\mathcal{P}}^h(s)}
      _{\LL\infty(\mathcal{X}; \reali)} \dd s
    \\
    & \le
    & \int_0^t
      \norma{
      p^h\left(s, \cdot, \hat u (s)\right)
      -
      p^h\left(s, \cdot, \check u (s)\right)}_{\LL\infty(\mathcal{X}; \reali)}
      \dd s
    \\
    & \le
    & P_2 \int_0^t
      \norma{\hat u(s) - \check u(s)}_{\LL1(\mathcal{X}; \reali^k)} \dd s
    \\
    & =
    & P_2 \, \norma{\hat u - \check u}
      _{\LL1([0, t] \times \mathcal{X}; \reali^k)}.
  \end{eqnarray*}
  Inserting these estimates into~\eqref{eq:stab-est-proof} we deduce
  that
  \begin{align*}
    & \norma{\hat{u}^h(t) - \check{u}^h(t)}_{\LL1 (\mathcal{X};\reali)}
    \\
    \le \
    & e^{(P_1 + P_2 M) t}
      \norma{\hat u_{o} - \check u_{o}}_{\LL1 (\mathcal{X}; \reali^k)}
    \\
    & + e^{(P_1 + P_2 M) t}\! \left(
      \norma{v}_{\LL\infty ([0,t]\times\mathcal{X};\reali^{n+m})}
      \norma{B}_{\LL1 (\partial\mathcal{X}; \reali)}+
      Q_1 + 2M Q_3 + K P_2 \right) \!
      \norma{\hat u - \check u}_{\LL1 ([0,t]\times\mathcal{X}; \reali^k)}.
  \end{align*}
  Sum over $h = 1, \ldots, k$ and use Gronwall Lemma to
  prove~\ref{thm:it:ID}, completing the proof.
\end{proofof}

\begin{proofof}{Corollary~\ref{cor:piu}}
  For every $w \in X$, with $X$ as in~\eqref{eq:X_M}, define
  $u = \mathcal T w$ as the image of $w$ through the operator
  $\mathcal T$, defined in~\eqref{eq:operator-T}. By~\eqref{eq:8}, we
  deduce that $u^h(t, x) \ge 0$ for a.e.~$x \in \mathcal{X}$.  This
  implies that also the unique fixed point of the operator
  $\mathcal T$ has the same property, thus~\eqref{eq:positivity}
  holds.
\end{proofof}

\begin{proofof}{Corollary~\ref{cor:assumpt-defin-result}}
  By Theorem~\ref{thm:main}, we know that there exists a solution
  $u \in \C0 ([0,T]; \LL1 (\mathcal{X}; \reali^k))$ and that this
  solution can be uniquely extended beyond time $T$ as long as
  $\norma{u (T)}_{\LL1 (\mathcal{X}; \reali^k)}$ is bounded. By
  Corollary~\ref{cor:piu},
  $\norma{u (t)}_{\LL1 (\mathcal{X}; \reali^k)} = \sum_{h=1}^k
  \int_{\mathcal{X}} u^h (t,x) \dd{x}$. Using~\eqref{eq:32}, the
  Divergence Theorem and~\ref{ip:(ub)}, we have
  \begin{align*}
    & \dfrac{\dd{~}}{\dd{t}} \norma{u (t)}_{\LL1 (\mathcal{X}; \reali^k)}
    \\
    = \
    & \dfrac{\dd{~}}{\dd{t}}
      \sum_{h=1}^k
      \int_{\mathcal{X}} u^h (t,x) \dd{x}
    \\
    = \
    & \sum_{h=1}^k
      \int_{\mathcal{X}}
      \left(
      p^h\left(t,x, u (t)\right) u (t)
      +
      q^h\left(t,x, u (t,x), u (t)\right)
      \right)
      \dd{x}
      +
      \sum_{h=1}^k \int_{\partial\mathcal{X}}
      u_b^h \left(t, \xi, u (t)\right) \dd\xi
    \\
    \leq \
    & \int_{\mathcal{X}} \left(C_1 (t,x)  + C_2 (t)\, \sum_{h=1}^k u^h (t,x)\right) \dd{x}
      + \int_{\partial\mathcal{X}} B (\xi)
      \left(k + \norma{u (t)}_{\LL1 (\mathcal{X}; \reali^k)}\right) \dd\xi
    \\
    = \
    & \left(\norma{C_1}_{\LL\infty ([0,t]; \LL1(\mathcal{X}; \reali))}
      {+}
      k \, \norma{B}_{\LL1 (\partial\mathcal{X}; \reali)}\right)
      +
      \left(
      \norma{C_2}_{\LL\infty ([0,t]; \reali)}
      {+}
      \norma{B}_{\LL1 (\partial\mathcal{X}; \reali)}
      \right)
      \norma{u (t)}_{\LL1 (\mathcal{X}; \reali^k)}
  \end{align*}
  and usual ODE estimates ensure that
  $\norma{u (t)}_{\LL1 (\mathcal{X}; \reali^k)}$ is bounded on bounded
  intervals.
\end{proofof}

\begin{proofof}{Theorem~\ref{thm:stab}}
  We divide the proof in several steps.
  \paragraph{Theorem~\ref{thm:main} Applies.}
  We first check that the assumptions of Theorem~\ref{thm:main} hold.

  \textbf{\ref{hyp:g_a} holds.} Fix
  $h \in \left\{1, \ldots, k\right\}$, $t \in I$ and
  $x \in \mathcal X$.  If $w \in \LL1(\mathcal X; \reali^k)$, then
  \begin{align*}
    \modulo{p^h\left(t, x, w\right)}
    & \le \bar P_1 + \bar P_2
      \norma{\int_{\mathcal X}{\mathcal K}^h_p\left(t, x, x'\right) w(x')
      \dd x'}
    \\
    & \le \bar P_1 + \bar P_2 \norma{{\mathcal K}^h_p}_{\LL\infty([0,t]\times
      \mathcal X^2; \reali^{k_p k})}
      \norma{w}_{\LL1(\mathcal X; \reali^k)}.
  \end{align*}
  If $w_1, w_2 \in \LL1(\mathcal X; \reali^k)$, then
  \begin{align*}
    \modulo{p^h\left(t, x, w_1\right) - p^h\left(t, x, w_2\right)}
    & \le \bar P_2
      \norma{\int_{\mathcal X}\modulo{{\mathcal K}^h_p\left(t, x, x'\right)}
      \modulo{w_1(x') - w_2(x')}
      \dd x'}
    \\
    & \le \bar P_2 \norma{{\mathcal K}^h_p}_{\LL\infty([0,t]\times
      \mathcal X^2; \reali^{k_p k})} \,
      \norma{w_1 - w_2}_{\LL1(\mathcal X; \reali^k)}.
  \end{align*}
  Therefore \ref{hyp:g_a} holds with $P_1 = \bar P_1$ and
  $P_2 = \bar P_2 \norma{{\mathcal K}^h_p}_{\LL\infty([0,t]\times
    \mathcal X^2; \reali^{k_p k})}$.

  \textbf{\ref{hyp:g_b} holds.}  Fix
  $h \in \left\{1, \ldots, k\right\}$, $t \in I$ and
  $x \in \mathcal X$.  If $u \in \reali^k$ and
  $w \in \LL1(\mathcal X; \reali^k)$, then
  \begin{align*}
    \modulo{q^h\left(t, x, u, w\right)}
    & = \modulo{Q^h\left(t, x, u,
      \int_{\mathcal X}{\mathcal K}^h_q\left(t, x, x'\right) w(x')
      \dd x'\right)}
    \\
    &
      \le \bar Q_1 \norma{u} {+} \bar Q_2(x)
      \norma{\int_{\mathcal X} \! {\mathcal K}^h_q\left(t, x, x'\right) w(x')
      \dd x'}
        {+} \bar Q_3 \norma{u}
        \norma{\int_{\mathcal X} \! {\mathcal K}^h_q\left(t, x, x'\right) w(x') \dd x'}
    \\
    & \le \bar Q_1 \norma{u}
      + \left(\bar Q_2(x) + \bar Q_3 \norma{u}\right)
      \norma{{\mathcal K}^h_q}_{\LL\infty([0,t]\times \mathcal X^2; \reali^{k_q k})}
      \norma{w}_{\LL1(\mathcal X; \reali^k)}.
  \end{align*}
  If $u_1, u_2 \in \reali^k$ and
  $w_1, w_2 \in \LL1(\mathcal X; \reali^k)$, then
  \begin{align*}
    & \modulo{q^h\left(t, x, u_1, w_1\right) - q^h\left(t, x, u_2, w_2\right)}
    \\
    & \le \bar Q_1 \norma{u_1 - u_2}
      + \bar Q_3 \norma{{\mathcal K}^h_u}
      _{\LL\infty([0,t] \times \mathcal X^2; \reali^{k_q k})}
      \norma{w_1}_{\LL1(\mathcal X; \reali^k)} \norma{u_1 - u_2}
    \\
    & \quad + \bar Q_3 \norma{u_2} \norma{{\mathcal K}^h_u}
      _{\LL\infty([0,t] \times \mathcal X^2; \reali^{k_q k})}
      \norma{w_1 - w_2}_{\LL1(\mathcal X; \reali^k)}.
  \end{align*}
  Therefore, condition~\ref{hyp:g_b} holds with $Q_1 = \bar Q_1$,
  $Q_2(x) = \bar Q_2(x) \norma{{\mathcal
      K}^h_q}_{\LL\infty([0,t]\times \mathcal X^2; \reali^{k_q k})}$,
  and
  $Q_3 = \bar Q_3(x) \norma{{\mathcal K}^h_q}_{\LL\infty([0,t]\times
    \mathcal X^2; \reali^{k_q k})}$. \ref{hyp:g-positivity} is
  straightforward.

  \textbf{\ref{ip:(ub)} holds:}
  \begin{align*}
    \modulo{u_b^h (t,\xi,w)}
    \leq \
    &\bar B (\xi)
      \left(
      1
      +
      \norma{\int_{\mathcal{X}} {\mathcal K}^h_u (t,\xi,x') \, w (x') \dd{x'}}
      \right)
    \\
    \leq \
    &\bar B (\xi)
      \left(
      1
      +
      \norma{{\mathcal K}^h_u}_{\LL\infty ([0,t]\times\partial\mathcal{X} \times \mathcal{X}; \reali^{k_u k})}  \, \norma{w}_{\LL1 (\mathcal{X};\reali^k)}
      \right) \,.
    \\
    \modulo{u_b^h (t,\xi,w)-u_b^h (t,\xi,w')}
    \leq \
    & \bar B (\xi)
      \norma{{\mathcal K}^h_u}_{\LL\infty ([0,t]\times\partial\mathcal{X} \times \mathcal{X}; \reali^{k_u k})}  \, \norma{w-w'}_{\LL1 (\mathcal{X};\reali^k)}
  \end{align*}
  so~\ref{ip:(ub)} holds with
  $B (\xi) = \bar B (\xi) \left(1 + \norma{{\mathcal K}_u}_{\LL\infty
      ([0,t]\times\partial\mathcal{X} \times \mathcal{X}; \reali^{k_u
        k^2})}\right)$.  Clearly, also~\ref{ip:(ub-2)} holds.

  \paragraph{Stability Estimates.}We now pass to the stability
  estimates. In each of the following cases, we keep $t \in I$ fixed
  and $h \in \{1, \ldots, k\}$. Define
  \begin{equation}
    \label{eq:23}
    \begin{array}{@{}r@{\,}c@{\,}l@{\quad}%
      r@{\,}c@{\,}l@{\quad}r@{\,}c@{\,}l@{}}
      \hat{\mathcal{U}}_b^h (t,\xi)
      & =
      & \hat{u}_b^h\left(t, \xi, \hat{u} (t)\right) ,
      & \hat{\mathcal{Q}}^h (t,x)
      & =
      & \hat{q}^h \left(t,x,\hat{u} (t,x),\hat{u} (t)\right),
      & \hat{\mathcal{P}}^h (t,x)
      & =
      & \hat{p}^h \left(t,x,\hat{u} (t)\right),
      \\
      \check{\mathcal{U}}_b^h (t,\xi)
      & =
      & \check{u}_b^h\left(t, \xi, \check{u} (t)\right) ,
      &  \check{\mathcal{Q}}^h (t,x)
      & =
      & \check{q}^h \left(t,x,\check{u}(t,x),\check{u}(t)\right),
      & \check{\mathcal{P}}^h (t,x)
      & =
      & \check{p}^h \left(t,x,\check{u}(t)\right) .
    \end{array}
  \end{equation}
  In order to use~\Cref{lem:stability-linear-system}, compute
  preliminarily
  \begin{displaymath}
    \mathcal{P} (t)
    =
    \exp\left(t \, \max \left\{
        \norma{\hat{\mathcal{P}}^h}_{\LL\infty ([0,t]\times\mathcal{X}; \reali)}
        \,,\;
        \norma{\check{\mathcal{P}}^h}_{\LL\infty ([0,t]\times\mathcal{X}; \reali)}
      \right\}
    \right)
    \leq
    \exp\left(t \, (P_1 + P_2 M)\right) \,,
  \end{displaymath}
  where $M$ is an upper bound for the $\LL\infty$ in time and $\LL1$
  in space norms of both solutions. Therefore,
  \Cref{lem:stability-linear-system} implies that
  \begin{equation}
    \label{eq:stab-estimate-proof}
    \begin{split}
      & \norma{\hat{u}^h (t) - \check{u}^h (t)}_{\LL1 (\mathcal{X};
        \reali)}
      \\
      \leq\, & \mathcal{P} (t) \, \norma{v}_{\LL\infty
        ([0,t]\times\mathcal{X}; \reali^{n+m})} \,
      \norma{\hat{\mathcal{U}}^h_b- \check{\mathcal{U}}^h_b} _{\LL1
        ([0,t]\times \partial\mathcal{X};\reali)} + \mathcal{P} (t) \,
      \norma{\hat{\mathcal{Q}}^h - \check{\mathcal{Q}}^h} _{\LL1
        ([0,t]\times\mathcal{X};\reali)}
      \\
      & + \mathcal{P} (t) \, \left( \norma{u_o}_{\LL1 (\mathcal{X};
          \reali^k)} +\norma{\check{\mathcal{Q}}^h}_{\LL1 ([0,t]
          \times \mathcal{X}; \reali)} \right)
      \norma{\hat{\mathcal{P}}^h - \check{\mathcal{P}}^h} _{\LL1
        ([0,t]; \LL\infty(\mathcal{X}; \reali))}
      \\
      & + \mathcal{P} (t) \, \norma{v}_{\LL\infty
        ([0,t]\times\mathcal{X}; \reali^{k\times(n+m)})}\,
      \norma{\check{\mathcal{U}}_{b}} _{\LL1
        ([0,t]\times\partial\mathcal{X};\reali^k)}
      \norma{\hat{\mathcal{P}}^h - \check{\mathcal{P}}^h} _{\LL1
        ([0,t]; \LL\infty(\mathcal{X}; \reali))}.
    \end{split}
  \end{equation}
  Then, we estimate the terms in~\eqref{eq:stab-estimate-proof}.
  Using~\ref{ip:(ub)} and~\eqref{eq:23} we deduce that
  \begin{eqnarray*}
    &
    & \norma{\hat{\mathcal{U}}^h_b- \check{\mathcal{U}}^h_b}_{\LL1 ([0,t]\times  \partial\mathcal{X};\reali)}
    \\
    & =
    & \int_0^t \int_{\partial\mathcal{X}}
      \modulo{\hat{u}_b^h\left(\tau, \xi, \hat{u} (\tau)\right)
      -
      \check{u}_b^h\left(\tau, \xi, \check{u} (\tau)\right)
      }
      \dd\xi \dd{\tau}
    \\
    & \leq
    & \int_0^t \int_{\partial\mathcal{X}}
      \modulo{
      \hat{u}_b^h\left(\tau, \xi, \hat{u} (\tau)\right)
      -
      \hat{u}_b^h\left(\tau, \xi, \check{u} (\tau)\right)
      }
      \dd\xi \dd{\tau}
    \\
    &
    & +
      \int_0^t \int_{\partial\mathcal{X}}
      \modulo{
      \hat{u}_b^h\left(\tau, \xi, \check{u} (\tau)\right)
      -
      \check{u}_b^h\left(\tau, \xi, \check{u} (\tau)\right)
      }
      \dd\xi \dd{\tau}
    \\
    & \le
    & \norma{B}_{\LL1 (\partial\mathcal{X};\reali)} \,
      \norma{\hat u - \check u}_{\LL1 ([0,t]\times \mathcal{X}; \reali^k)}
    \\
    &
    & + \int_0^t \int_{\partial\mathcal{X}}
      \modulo{
      \hat U^h_b \left(
      \tau, \xi,
      \int_{\mathcal{X}} \hat{\mathcal{K}}^h_u (\tau,\xi,x') \, \check u (\tau,x') \dd{x'}
      \right)
      -
      \hat U^h_b \left(
      \tau, \xi,
      \int_{\mathcal{X}} \check{\mathcal{K}}^h_u (\tau,\xi,x') \, \check u (\tau,x') \dd{x'}
      \right)} \dd \xi\, \dd \tau
    \\
    &
    & + \int_0^t \int_{\partial\mathcal{X}}
      \modulo{
      \hat U^h_b \left(
      \tau, \xi,
      \int_{\mathcal{X}} \check{\mathcal{K}}^h_u (\tau,\xi,x') \, \check u (\tau,x') \dd{x'}
      \right)
      -
      \check U^h_b \left(
      \tau, \xi,
      \int_{\mathcal{X}} \check{\mathcal{K}}^h_u (\tau,\xi,x') \, \check u (\tau,x') \dd{x'}
      \right)} \dd \xi\, \dd \tau
    \\
    & \le
    & \norma{B}_{\LL1 (\partial\mathcal{X};\reali)} \,
      \norma{\hat u - \check u}_{\LL1 ([0,t]\times \mathcal{X}; \reali^k)}
    \\
    &
    & +
      \int_0^t \int_{\partial\mathcal{X}} \bar B (\xi)
      \norma{\hat{\mathcal{K}}^h_u - \check{\mathcal{K}}^h_u}_{\LL\infty ([0,t]\times\partial\mathcal{X}\times\mathcal{X}; \reali^{k_u k})}
      \norma{\check u (\tau)}_{\LL1 (\mathcal{X}; \reali^k)} \dd\xi \dd\tau
    \\
    &
    & +
      \norma{\hat U_b^h - \check U_b^h}_{\LL1 ([0,t]\times \partial\mathcal{X}; \LL\infty (\reali^{k_u};\reali))}
    \\
    \\
    & \le
    & \norma{B}_{\LL1 (\partial\mathcal{X};\reali)} \,
      \norma{\hat u - \check u}_{\LL1 ([0,t]\times \mathcal{X}; \reali^k)}
      +
      \norma{\bar B}_{\LL1 (\partial\mathcal{X}; \reali)}
      \norma{\hat{\mathcal{K}}^h_u - \check{\mathcal{K}}^h_u}_{\LL\infty ([0,t]\times\partial\mathcal{X}\times\mathcal{X}; \reali^{k_u k})}
      \norma{\check u }_{\LL1 ([0,t]\times\mathcal{X}; \reali^k)}
    \\
    &
    & +
      \norma{\hat U_b^h - \check U_b^h}_{\LL1 ([0,t]\times \partial\mathcal{X}; \LL\infty (\reali^{k_u};\reali))} \,.
  \end{eqnarray*}
  Using~\ref{hyp:g_b} we deduce that
  \begin{eqnarray*}
    &
    & \norma{\hat{\mathcal{Q}}^h - \check{\mathcal{Q}}^h}
      _{\LL1 ([0,t]\times\mathcal{X};\reali)}
    \\
    & \leq
    & \int_0^t \int_{\mathcal{X}}
      \modulo{
      \hat{q}^h \left(\tau,x,\hat{u} (\tau,x),\hat{u} (\tau)\right)
      -
      \hat{q}^h \left(\tau,x,\check{u} (\tau,x),\check{u} (\tau)\right)
      }
      \dd{x} \dd{\tau}
    \\
    &
    & +
      \int_0^t \int_{\mathcal{X}}
      \modulo{
      \hat{q}^h \left(\tau,x,\check{u} (\tau,x),\check{u} (\tau)\right)
      -
      \check{q}^h \left(\tau,x,\check{u} (\tau,x),\check{u} (\tau)\right)
      }
      \dd{x} \dd{\tau}
    \\
    & \le
    & Q_1 \int_0^t \norma{\hat u(\tau) - \check u(\tau)}
      _{\LL1(\mathcal X; \reali^k)} \dd \tau
          + Q_3 \int_0^t \norma{\hat u(\tau)}_{\LL1(\mathcal X; \reali^k)}
          \int_{\mathcal X} \norma{\hat u(\tau, x) - \check u(\tau, x)} \dd x\,
          \dd \tau
    \\
    &
    & + Q_3 \int_0^t \norma{\hat u(\tau) - \check u(\tau)}
      _{\LL1(\mathcal X; \reali^k)}
      \int_{\mathcal X} \norma{\check u(\tau, x)} \dd x\, \dd \tau
    \\
    &
    & + \int_0^t \int_{\mathcal{X}}
      \left|
      \hat Q^h\left(\tau, x, \check u (\tau,x), \int_{\mathcal X}
      \hat{\mathcal K}^h_q\left(\tau, x, x'\right)
      \check u\left(\tau, x'\right) \dd x'\right)
      \right.
    \\
    &
    & \qquad\qquad\quad \left.
      -
      \check Q^h\left(\tau, x, \check u (\tau,x), \int_{\mathcal X}
      \check{\mathcal K}^h_q\left(\tau, x, x'\right)
      \check u\left(\tau, x'\right) \dd x'\right)
      \right|
      \dd{x} \dd\tau
    \\
    & \leq
    & \left(Q_1
      +
      Q_3
      \left(
      \norma{\hat u}_{\LL\infty ([0,t];\LL1 (\mathcal{X}; \reali^k))}
      +
      \norma{\check u}_{\LL\infty ([0,t];\LL1 (\mathcal{X}; \reali^k))}
      \right)
      \right)
      \int_0^t \norma{\hat u(\tau) - \check u(\tau)}
      _{\LL1(\mathcal{X}; \reali^k)} \dd \tau
    \\
    &
    & +
      \int_0^t \int_{\mathcal{X}}
      \sup_{\eta \in \reali^{k_q}}
      \modulo{
      \hat Q^h\left(\tau, x,\check u (\tau,x), \eta\right)
      -
      \check Q^h\left(\tau, x, \check u (\tau,x), \eta\right)
      }
      \dd{x} \dd\tau
    \\
    &
    & + \bar Q_3 \int_0^t \int_\mathcal{X}
      \norma{\check u(\tau, x)}
      \norma{\int_{\mathcal X}
      \left(
      \hat{\mathcal{K}}^h_q(\tau, x, x') - \check{\mathcal{K}}^h_q
      (\tau, x, x')\right)
      \check u\left(\tau, x'\right) \dd x'}\, \dd x\, \dd\tau
    \\
    & \leq
    & \left(Q_1
      +
      Q_3
      \left(
      \norma{\hat u}_{\LL\infty ([0,t];\LL1 (\mathcal{X}; \reali^k))}
      +
      \norma{\check u}_{\LL\infty ([0,t];\LL1 (\mathcal{X}; \reali^k))}
      \right)
      \right)
      \norma{\hat u - \check u}_{\LL1([0,t]\times\mathcal{X}; \reali^k)}
    \\
    &
    & +
      \norma{\hat Q^h - \check Q^h}_{\LL1 ([0,t]\times\mathcal{X}; \LL\infty (\reali^k\times\reali^{k_q}; \reali
      ))}
      + \int_0^t
      \norma{\check u\left(\tau\right)}^2_{\LL1(\mathcal X; \reali^k)}
      \dd\tau
      \norma{\hat{\mathcal K}^h_q - \check{\mathcal K}^h_q}
      _{\LL\infty([0,t] \times \mathcal X^2; \reali^{k_q})}.
  \end{eqnarray*}
  Using~\ref{hyp:g_a}, we have
  \begin{eqnarray*}
    &
    & \norma{\hat{\mathcal{P}}^h - \check{\mathcal{P}}^h}
      _{\LL1 ([0,t]; \LL\infty(\mathcal{X}; \reali))}
    \\
    & \le
    & \int_0^t
      \sup_{x \in \mathcal{X}}
      \modulo{
      \hat{p}^h\left(\tau,x,\hat u (\tau)\right)
      -
      \hat{p}^h\left(\tau,x,\check u (\tau)\right)
      }
      \dd{\tau}
      + \int_0^t
      \sup_{x \in \mathcal{X}}
      \modulo{
      \hat{p}^h\left(\tau,x,\check u (\tau)\right)
      -
      \check{p}^h\left(\tau,x,\check u (\tau)\right)
      }
      \dd{\tau}
    \\
    & \le
    & P_2 \int_0^t \norma{\hat u(\tau) - \check u(\tau)}
      _{\LL1(\mathcal X; \reali^k)}\dd \tau
    \\
    &
    & + \int_0^t
      \sup_{x \in \mathcal{X}}
      \modulo{
      \hat{P}^h
      \left(
      \tau, x,
      \int_{\mathcal{X}} \hat{\mathcal{K}}^h_p (\tau, x, x')
      \check{u} (\tau, x') \dd{x'}
      \right)
      -
      \check{P}^h
      \left(
      \tau, x,
      \int_{\mathcal{X}} \hat{\mathcal{K}}^h_p (\tau, x, x')
      \check{u} (\tau, x') \dd{x'}
      \right)
      }
      \dd{\tau}
    \\
    &
    & + \int_0^t
      \sup_{x \in \mathcal{X}}
      \modulo{
      \check{P}^h
      \left(
      \tau, x,
      \int_{\mathcal{X}} \hat{\mathcal{K}}^h_p (\tau, x, x')
      \check{u} (\tau, x') \dd{x'}
      \right)
      -
      \check{P}^h
      \left(
      \tau, x,
      \int_{\mathcal{X}} \check{\mathcal{K}}^h_p (\tau, x, x')
      \check{u} (\tau, x') \dd{x'}
      \right)
      }
      \dd{\tau}
    \\
    & \le
    & P_2 \int_0^t \norma{\hat u(\tau) - \check u(\tau)}
      _{\LL1(\mathcal X; \reali^k)}\dd \tau
      + t \norma{\hat P^h - \check P^h}_{\LL\infty\left([0,t] \times \mathcal X
      \times \reali^{k_p}; \reali\right)}
    \\
    &
    & + \bar P_2 \int_0^t
      \sup_{x \in \mathcal{X}}
      \int_{\mathcal X}
      \modulo{
      \hat{\mathcal{K}}^h_p (\tau, x, x')
      -
      \check{\mathcal{K}}^h_p (\tau, x, x')}
      \modulo{\check{u} (\tau, x')} \dd x'
      \dd{\tau}
    \\
    & \le
    & P_2 \int_0^t \norma{\hat u(\tau) - \check u(\tau)}
      _{\LL1(\mathcal X; \reali^k)}\dd \tau
      + t \norma{\hat P^h - \check P^h}_{\LL\infty\left([0,t] \times \mathcal X
      \times \reali^{k_p}; \reali\right)}
    \\
    &
    & + \bar P_2 \int_0^t \norma{\check u(\tau)}
      _{\LL1(\mathcal X; \reali^k)} \dd \tau
      \norma{
      \hat{\mathcal{K}}^h_p
      -
      \check{\mathcal{K}}^h_p}_{\LL\infty\left([0,t]\times \mathcal X^2; \reali^{k_p k}
      \right)}.
  \end{eqnarray*}
  The above estimate, duly inserted in~\eqref{eq:stab-estimate-proof}
  and followed by a standard application of Gronwall Lemma, completes
  the proof.
\end{proofof}

\begin{proofof}{Proposition~\ref{prop:spreading-pandemic}}
  Checking~\ref{ip:(v)} and~\ref{ip:(u0)} is immediate. It is
  sufficient to verify that the assumptions of Theorem~\ref{thm:stab}
  hold. It is immediate to check that~\ref{stab:it:1} holds with
  $\bar P_1 = \max \{\norma{\mu_S}, \norma{\mu_I} +
  \norma{\kappa+\theta}, \norma{\mu_H} + \norma{\eta},
  \norma{\mu_R}\}$ (all norms being in
  ${\LL\infty (\mathcal{I}\times\reali_+\times\reali^2; \reali)}$),
  $\bar P_2 = 1$, thanks to $\rho \in
  \LL\infty$. Concerning~\ref{stab:it:2}, choose
  $\bar Q_1 = \max\{\norma{\kappa}, \norma{\eta+\theta}\}$,
  $\bar Q_2 = 0$, $\bar Q_3 = 1$ and use $\rho \in
  \LL\infty$. Finally, \ref{stab:it:3} holds with
  $\bar B (\xi) = \sup_I\norma{S_b (t)}_{\LL\infty
    (\mathcal{X},\reali)}$.

  Positivity is immediate. To apply
  Corollary~\ref{cor:assumpt-defin-result}, simply set $C_1 \equiv 0$
  and $C_2 \equiv 0$.

  To obtain an $\L\infty$ bound, note first that since
  $I \in \C0\left(\mathcal{I};\L1 (\reali_+ \times \reali^2; \reali)
  \right)$, the integral in~\eqref{eq:6} is bounded on any bounded
  time interval. Hence, a repeated application of~\eqref{eq:5} in
  Lemma~\ref{lem:L1inf} yields the boundedness of $S$, $I$, $H$ and
  $R$ on any bounded interval. Uniqueness then follows
  from~\ref{thm:it:uni}.
\end{proofof}

\begin{proofof}{Proposition~\ref{prop:an-age-phen}}
  Assumptions~\ref{ip:(v)} and~\ref{ip:(u0)} trivially
  hold. Condition~\ref{hyp:g_a} holds with
  $P_1 = \norma{d}_{\LL\infty (\reali^n; \reali)} / \eps$ and
  $P_2= 1 /\eps$. Verifying~\ref{hyp:g_b} is straightforward. To prove
  that~\ref{ip:(ub)} holds, compute for $y \in \reali^n$ with
  $\norma {y}> r$:
  \begin{eqnarray*}
    \modulo{u_b (t,y,w)}
    & =
    & \modulo{\dfrac{1}{A (a=0,y) \, \varepsilon^n}
      \int_{\reali_+} \! \int_{\reali^n}
      \mathcal{M} \! \left(\frac{y'-y}{\varepsilon}\right) \,
      b (a',y') \,
      w(a', y')
      \dd{a'} \dd{y'}}
    \\
    & \leq
    & \dfrac{1}{\epsilon^n \; \inf A}\;
      \modulo{\int_{\reali_+} \! \int_{\reali^n}
      \mathcal{M} \! \left(\frac{y'-y}{\varepsilon}\right) \,
      b (a',y') \,
      w(a', y')
      \dd{a'} \dd{y'}}
    \\
    & \leq
    & \dfrac{1}{\epsilon^n\; \inf A}\;
      \int_{\reali_+} \! \int_{\reali^n}
      \modulo{\mathcal{M} \! \left(\frac{y'-y}{\varepsilon}\right)} \,
      \left(\sup_{\modulo{y'-y} <r} \modulo{b (a',y')}\right) \,
      \modulo{w(a', y')}
      \dd{a'} \dd{y'}
    \\
    & \leq
    & \dfrac{1}{\epsilon^n \; \inf A} \;
      \norma{\mathcal{M}}_{\LL\infty (\reali^n; \reali)} \;
      \dfrac{1}{\left(1+\norma{y}-r\right)^{n+1}}
      \int_{\reali_+} \! \int_{\reali^n}
      \modulo{w(a', y')}
      \dd{a'} \dd{y'}
  \end{eqnarray*}
  proving the first requirement in~\ref{ip:(ub)}. Lipschitz continuity
  is proved by the same procedure.

  The assumptions on the signs of data and parameters allow to apply
  Corollary~\ref{cor:piu} and ensure that also~\eqref{eq:36} holds.
\end{proofof}

\section*{Acknowledgments}

The authors were partly supported by the GNAMPA 2022 project
\emph{Evolution Equations:Well Posedness, Control and Applications}.

{\footnotesize

  \bibliographystyle{abbrv}

  \bibliography{CGMR_revised.bib}

\def\ocirc#1{\ifmmode\setbox0=\hbox{$#1$}\dimen0=\ht0\advance\dimen0
  by1pt\rlap{\hbox
  to\wd0{\hss\raise\dimen0\hbox{\hskip.2em$\scriptscriptstyle\circ$}\hss}}#1\else{\accent"17
  #1}\fi}
\begin{thebibliography}{10}

\bibitem{MR2496711}
A.~S. Ackleh and K.~Deng.
\newblock A nonautonomous juvenile-adult model: well-posedness and long-time
  behavior via a comparison principle.
\newblock {\em SIAM J. Appl. Math.}, 69(6):1644--1661, 2009.

\bibitem{AinsebaIannelli2003}
B.~Ainseba and M.~Iannelli.
\newblock Exact controllability of a nonlinear population-dynamics problem.
\newblock {\em Differential Integral Equations}, 16(11):1369--1384, 2003.

\bibitem{MR4263205}
G.~Albi, L.~Pareschi, and M.~Zanella.
\newblock Control with uncertain data of socially structured compartmental
  epidemic models.
\newblock {\em J. Math. Biol.}, 82(7):63, 2021.

\bibitem{BELL1967329}
G.~I. Bell and E.~C. Anderson.
\newblock Cell growth and division: I. {A} mathematical model with applications
  to cell volume distributions in mammalian suspension cultures.
\newblock {\em Biophysical Journal}, 7(4):329 -- 351, 1967.

\bibitem{MR3013074}
F.~Billy, J.~Clairambault, and O.~Fercoq.
\newblock Optimisation of cancer drug treatments using cell population
  dynamics.
\newblock In {\em Mathematical methods and models in biomedicine}, Lect. Notes
  Math. Model. Life Sci., pages 265--309. Springer, New York, 2013.

\bibitem{BressanPiccoliBook}
A.~Bressan and B.~Piccoli.
\newblock {\em Introduction to the mathematical theory of control}, volume~2 of
  {\em AIMS Series on Applied Mathematics}.
\newblock American Institute of Mathematical Sciences, Springfield, MO, 2007.

\bibitem{ColomboGaravello2017_ControlBio}
R.~M. Colombo and M.~Garavello.
\newblock Control of biological resources on graphs.
\newblock {\em ESAIM Control Optim. Calc. Var.}, 23(3):1073--1097, 2017.

\bibitem{preprint}
R.~M. Colombo, M.~Garavello, F.~Marcellini, and E.~Rossi.
\newblock An age and space structured {SIR} model describing the {C}ovid-19
  pandemic.
\newblock {\em J. Math. Ind.}, 10:Paper No. 22, 20, 2020.

\bibitem{Herty2011}
R.~M. Colombo, M.~Herty, and M.~Mercier.
\newblock Control of the continuity equation with a non local flow.
\newblock {\em ESAIM Control Optim. Calc. Var.}, 17(2):353--379, 2011.

\bibitem{Elena2015}
R.~M. Colombo and E.~Rossi.
\newblock Hyperbolic predators vs. parabolic prey.
\newblock {\em Commun. Math. Sci.}, 13(2):369--400, 2015.

\bibitem{FisterEtAl2004}
K.~R. Fister and S.~Lenhart.
\newblock Optimal control of a competitive system with age-structure.
\newblock {\em J. Math. Anal. Appl.}, 291(2):526--537, 2004.

\bibitem{MR354068}
M.~E. Gurtin and R.~C. MacCamy.
\newblock Non-linear age-dependent population dynamics.
\newblock {\em Arch. Rational Mech. Anal.}, 54:281--300, 1974.

\bibitem{MR1929104}
P.~Hartman.
\newblock {\em Ordinary differential equations}, volume~38 of {\em Classics in
  Applied Mathematics}.
\newblock SIAM, Philadelphia, PA, 2002.
\newblock Corrected reprint of the second (1982) edition.

\bibitem{MR3700352}
M.~Iannelli and F.~Milner.
\newblock {\em The basic approach to age-structured population dynamics}.
\newblock Lecture Notes on Mathematical Modelling in the Life Sciences.
  Springer, Dordrecht, 2017.
\newblock Models, methods and numerics.

\bibitem{MR3616174}
H.~Inaba.
\newblock {\em Age-structured population dynamics in demography and
  epidemiology}.
\newblock Springer, Singapore, 2017.

\bibitem{KangRuan2021}
H.~Kang and S.~Ruan.
\newblock Nonlinear age-structured population models with nonlocal diffusion
  and nonlocal boundary conditions.
\newblock {\em J. Differential Equations}, 278:430--462, 2021.

\bibitem{KermackMcKendrick1927}
W.~O. Kermack, A.~G. McKendrick, and G.~T. Walker.
\newblock A contribution to the mathematical theory of epidemics.
\newblock {\em Proceedings of the Royal Society of London. Series A, Containing
  Papers of a Mathematical and Physical Character}, 115(772):700--721, 1927.

\bibitem{Kruzkov}
S.~N. Kru{\v{z}}hkov.
\newblock First order quasilinear equations with several independent variables.
\newblock {\em Mat. Sb. (N.S.)}, 81 (123):228--255, 1970.

\bibitem{LanglaisBusenberg1997}
M.~Langlais and S.~Busenberg.
\newblock Global behaviour in age structured {S}.{I}.{S}. models with seasonal
  periodicities and vertical transmission.
\newblock {\em J. Math. Anal. Appl.}, 213(2):511--533, 1997.

\bibitem{LorenziEtAl}
T.~Lorenzi, A.~Marciniak-Czochra, and T.~Stiehl.
\newblock A structured population model of clonal selection in acute leukemias
  with multiple maturation stages.
\newblock {\em J. Math. Biol.}, 79(5):1587--1621, 2019.

\bibitem{Lotka339}
A.~J. Lotka.
\newblock The stability of the normal age distribution.
\newblock {\em Proceedings of the National Academy of Sciences},
  8(11):339--345, 1922.

\bibitem{MR3887640}
P.~Magal and S.~Ruan.
\newblock {\em Theory and applications of abstract semilinear {C}auchy
  problems}, volume 201 of {\em Applied Mathematical Sciences}.
\newblock Springer, Cham, 2018.
\newblock With a foreword by Glenn Webb.

\bibitem{MalekEtAlBook}
J.~M{\'a}lek, J.~Ne{\v{c}}as, M.~Rokyta, and M.~R{\ocirc{u}}{\v{z}}i{\v{c}}ka.
\newblock {\em Weak and measure-valued solutions to evolutionary {PDE}s},
  volume~13 of {\em Applied Mathematics and Mathematical Comp.}
\newblock Chapman \& Hall, London, 1996.

\bibitem{Martin}
S.~Martin.
\newblock First order quasilinear equations with boundary conditions in the
  {$L^\infty$} framework.
\newblock {\em J. Differential Equations}, 236(2):375--406, 2007.

\bibitem{m'kendrick_1925}
A.~G. McKendrick.
\newblock Applications of mathematics to medical problems.
\newblock {\em Proceedings of the Edinburgh Mathematical Society}, 44:98–130,
  1925.

\bibitem{MeeardTran_2009}
S.~M\'{e}l\'{e}ard and V.~C. Tran.
\newblock Trait substitution sequence process and canonical equation for
  age-structured populations.
\newblock {\em J. Math. Biol.}, 58(6):881--921, 2009.

\bibitem{MR860962}
J.~A.~J. Metz and O.~Diekmann.
\newblock {\em The dynamics of physiologically structured populations},
  volume~68 of {\em Lecture Notes in Biomath.}
\newblock Springer, Berlin, 1986.

\bibitem{MischlerPerthameRyzhik_2002}
S.~Mischler, B.~Perthame, and L.~Ryzhik.
\newblock Stability in a nonlinear population maturation model.
\newblock {\em Math. Models Methods Appl. Sci.}, 12(12):1751--1772, 2002.

\bibitem{NordmannPerthameTaing2017}
S.~Nordmann, B.~Perthame, and C.~Taing.
\newblock Dynamics of concentration in a population model structured by age and
  a phenotypical trait.
\newblock {\em Acta Applicandae Mathematicae}, 155(1):197–225, Dec 2017.

\bibitem{PerthameBook}
B.~Perthame.
\newblock {\em Transport equations in biology}.
\newblock Frontiers in Mathematics. Birkh\"auser Verlag, Basel, 2007.

\bibitem{ElenaBoundary2018}
E.~Rossi.
\newblock Definitions of solutions to the {IBVP} for multi-dimensional scalar
  balance laws.
\newblock {\em J. Hyperbolic Differ. Equ.}, 15(2):349--374, 2018.

\bibitem{TuckerZimmerman1988}
S.~L. Tucker and S.~O. Zimmerman.
\newblock A nonlinear model of population dynamics containing an arbitrary
  number of continuous structure variables.
\newblock {\em SIAM J. Appl. Math.}, 48(3):549--591, 1988.

\bibitem{Vovelle}
J.~Vovelle.
\newblock Convergence of finite volume monotone schemes for scalar conservation
  laws on bounded domains.
\newblock {\em Numer. Math.}, 90(3):563--596, 2002.

\bibitem{MR772205}
G.~F. Webb.
\newblock {\em Theory of nonlinear age-dependent population dynamics},
  volume~89 of {\em Monographs and Textbooks in Pure and Applied Mathematics}.
\newblock Marcel Dekker, Inc., New York, 1985.

\end{thebibliography}

}
\end{document}